\numberwithin{equation}{section}
\theoremstyle{plain}
\newtheorem{theorem}{Theorem}[section]
\newtheorem{proposition}[theorem]{Proposition}
\newtheorem{lemma}[theorem]{Lemma}
\newtheorem{corollary}[theorem]{Corollary}
\theoremstyle{definition}
\newtheorem{remark}[theorem]{Remark}
\newcommand\E{\mathbf{E}}
\newcommand\Var{\mathbf{Var}}
\newcommand\Cov{\mathbf{Cov}}
\newcommand\R{\mathbb{R}}
\newcommand\Z{\mathbb{Z}}
\newcommand\N{\mathbb{N}}
\newcommand\C{\mathbb{C}}
\newcommand\eps{\varepsilon}
\newcommand\energy{\operatorname{Energy}}
\renewcommand\det{\operatorname{det}}
\newcommand\adj{\operatorname{adj}}
\newcommand\TV{\operatorname{TV}}
\newcommand\Expect{\operatorname{Integral}}
\newcommand\Disjoint{{\biguplus}}
\begin{document}

\title[Multilinear Kakeya, restriction, and oscillatory integrals]{Sharp bounds for multilinear curved Kakeya, restriction and oscillatory integral estimates away from the endpoint}

\author{Terence Tao}
\address{UCLA Department of Mathematics, Los Angeles, CA 90095-1555.}
\email{tao@math.ucla.edu}


\subjclass[2010]{42B20, 42B25}

\begin{abstract}  We revisit the multilinear Kakeya, curved Kakeya, restriction, and oscillatory integral estimates that were obtained in a paper of Bennett, Carbery, and the author using a heat flow monotonicity method applied to a fractional Cartesian product, together with induction on scales arguments.  Many of these estimates contained losses of the form $R^\eps$ (or $\log^{O(1)} R$) for some scale factor $R$.  By further developing the heat flow method, and applying it directly for the first time to the multilinear curved Kakeya and restriction settings, we are able to eliminate these losses, as long as the exponent $p$ stays away from the endpoint.  In particular, we establish global multilinear restriction estimates away from the endpoint, without any curvature hypotheses on the hypersurfaces.
\end{abstract}

\maketitle


\section{Introduction}

Throughout this paper, we fix a natural number $d \geq 2$.  
We use the asymptotic notation $X \lesssim Y$, $Y \gtrsim X$, or $X = O(Y)$ to denote the estimate $|X| \leq CY$ for some constant $C$ depending only on $d$; we also abbreviate $X \lesssim Y \lesssim X$ as $X \sim Y$.  If we need the implied constant $C$ to depend on additional parameters, we indicate this by subscripts, thus for instance $X \lesssim_A Y$ denotes the estimate $|X| \leq C_A Y$ for some $C_A$ depending on $d$ and $A$.  

\subsection{Multilinear Kakeya estimates}

The multilinear Kakeya estimate from \cite{bct}, \cite{guth} can be stated as follows.  For any natural number $n \in \N \coloneqq \{0,1,2,\dots\}$, we let $[n]$ denote the index set\footnote{We will also use brackets to denote dependence of an element of a function space on a time parameter $t$ or a spatial parameter $x$, for instance $w[t,x] \in L^\infty(\Omega \to \R)$ might be a weight $\omega \mapsto w[t,x](\omega)$ on a domain $\Omega$ that varies with the parameters $t,x$.  We hope that this conflict of notation will not cause undue confusion, as the symbols $t,x$ will not be used in this paper to denote natural numbers.}
$$ [n] \coloneqq \{ j \in \N: 1 \leq j \leq n \} = \{1,\dots,n\}.$$

\begin{theorem}[Multilinear Kakeya estimate]\label{mlk}  Let $\delta > 0$ be a radius.  For each $j \in [d]$, let $\mathbb{T}_j$ denote a finite family of infinite tubes $T_j$ in $\R^d$ of radius $\delta$.  Assume the following\footnote{For this theorem we only have the single axiom of transversality, but in subsequent theorems we will have multiple axioms, one of which will be an analogue of this transversality axiom.} axiom:
\begin{itemize}
\item[(i)] (Transversality) whenever $T_j \in \mathbb{T}_j$ is oriented in the direction of a unit vector $n_j$ for $j \in [d]$, we have
\begin{equation}\label{kak-trans}
 \left|\bigwedge_{j \in [d]} n_j\right| \geq A^{-1}
\end{equation}
for some $A>0$, where we use the usual Euclidean norm on the wedge product $\bigwedge^d \R^d$.  
\end{itemize}
Then, for any $p \geq \frac{1}{d-1}$, one has
\begin{equation}\label{gauss}
 \left\| \prod_{j \in [d]} \sum_{T_j \in \mathbb{T}_j} 1_{T_j} \right\|_{L^p(\R^d)} \lesssim_{A,p} \delta^{\frac{d}{p}} \prod_{j \in [d]} \# \mathbb{T}_j.
\end{equation}
where $L^p(\R^d)$ are the usual Lebesgue norms with respect to Lebesgue measure, $1_{T_j}$ denotes the indicator function of $T_j$, and $\# \mathbb{T}_j$ denotes the cardinality of $\mathbb{T}_j$.
\end{theorem}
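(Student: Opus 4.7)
The plan is to use the heat flow monotonicity method on a fractional Cartesian product, as in \cite{bct}. Since the trivial pointwise bound $\| \prod_{j \in [d]} \sum_{T_j \in \mathbb{T}_j} 1_{T_j} \|_{L^\infty(\R^d)} \leq \prod_{j \in [d]} \# \mathbb{T}_j$ handles $p = \infty$ (where $\delta^{d/p} = 1$), real interpolation reduces matters to the endpoint $p = \frac{1}{d-1}$. For this endpoint, I would smoothly dominate each indicator $1_{T_j}$ by a Gaussian $g_{T_j}$ of transverse width $\delta$ centered on the core of $T_j$, and set $f_j \coloneqq \sum_{T_j \in \mathbb{T}_j} g_{T_j}$. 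The problem then reduces to a Brascamp--Lieb-type inequality
\[
\int_{\R^d} \prod_{j \in [d]} f_j(x)^{1/(d-1)} \, dx \lesssim_A \prod_{j \in [d]} \| f_j \|_{L^1(\R^d)}^{1/(d-1)}
\]
for sums of Gaussians adapted to directions obeying \eqref{kak-trans}; translating masses via $\|g_{T_j}\|_{L^1} \sim \delta^{d-1}$ then yields the claimed $L^{1/(d-1)}$ bound.

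To establish this Gaussian inequality, I would introduce the Euclidean heat semigroup $P_t$, set $f_j[t] \coloneqq P_t f_j$, and study the functional
\[
Q(t) \coloneqq \int_{\R^d} \prod_{j \in [d]} f_j[t](x)^{1/(d-1)} \, dx.
\]
The strategy is to show $Q$ is monotone non-decreasing in $t$ and to compute $\lim_{t \to \infty} Q(t)$ directly: for large $t$ each $f_j[t]$ is essentially a single Gaussian of width $\sqrt{t}$ with total mass $\|f_j\|_{L^1}$, for which an explicit determinant computation delivers the right-hand side. The cleanest setup for monotonicity is to lift to a fractional Cartesian product on $(\R^d)^d$: write $\prod_j f_j[t](x)^{1/(d-1)}$ as an integral of a genuine tensor product of the $f_j[t]$ against a suitable measure supported on a diagonal-type subvariety. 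The heat flow then splits into one-parameter heat flows on each factor, and a short computation shows $Q'(t)$ equals the expectation of a quadratic form in the logarithmic gradients $\nabla \log f_j[t]$.

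The main obstacle is the pointwise non-negativity of this quadratic form with a constant depending only on $A$. In the Gaussian model it reduces to positive semi-definiteness of a matrix built from the outer products $n_j n_j^\top$, and \eqref{kak-trans} is exactly the quantitative non-degeneracy required, since $|\bigwedge_{j \in [d]} n_j| \geq A^{-1}$ is precisely the determinant controlling the Gaussian Brascamp--Lieb constant. Once this AM-GM-type step is in hand, tracking the $A$-dependent constant through the monotonicity argument and the $t \to \infty$ limit gives the endpoint $p = \frac{1}{d-1}$ estimate, and the interpolation above then extends it to the full range $p \geq \frac{1}{d-1}$.
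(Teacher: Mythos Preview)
Your reduction to the endpoint $p=\tfrac{1}{d-1}$ via interpolation is exactly where the proposal breaks down. The heat flow monotonicity method does \emph{not} establish the endpoint case; it only handles $p>\tfrac{1}{d-1}$, with constants that blow up polynomially in $(d-1-\tfrac{1}{p})^{-1}$ as $p\to\tfrac{1}{d-1}$. The endpoint was obtained by Guth \cite{guth} using algebraic topology (the polynomial ham sandwich theorem); see also \cite{cv}. So the correct architecture is the reverse of yours: prove each non-endpoint exponent directly by heat flow, and treat the endpoint separately (or cite \cite{guth}).

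Concretely, after the standard reductions (replace tubes by Gaussians, localise so that each family $\mathbb{T}_j$ has directions within $O(\eps)$ of the $j^{\text{th}}$ axis), the derivative $t\partial_t Q(t)$ is, up to lower-order terms, the virtual integral of a quadratic form $\{\phi,\phi\}$ as in Theorem~\ref{non-neg}(ii). Nonnegativity of that form requires $(1-CA^C\eps)M_0 - B_j^0 (B_j^0)^T$ to be positive semi-definite for each $j$. With $B_j^0=\pi_j$ one has $M_0=p(d-1)I_d$ and $\pi_j\pi_j^T$ has largest eigenvalue $1$, so the condition reads $p(d-1)(1-CA^C\eps)\ge 1$. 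At $p=\tfrac{1}{d-1}$ this fails for every $\eps>0$: there is no slack to absorb the $O(\eps)$ perturbations coming from the tubes not being exactly axis-parallel. Your claim that ``\eqref{kak-trans} is exactly the quantitative non-degeneracy required'' is thus incorrect at the endpoint; transversality controls $\det M_0$ but not the strict gap needed for positivity.

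Two smaller points. First, the naive functional $Q(t)=\int\prod_j f_j[t]^{1/(d-1)}$ is not the one that is monotone; one must insert the weight $\det M$ (so that the inverse $M^{-1}$ appearing in the computation becomes the polynomial $\adj M$ and the argument extrapolates to non-integer $p$). Second, even at non-endpoint $p$, the ``AM--GM-type step'' you allude to requires the preliminary decomposition into near-axis-parallel families and the choice of $\eps\sim (d-1-\tfrac{1}{p})^{C_0}$; without this the positivity fails for the same reason as above.
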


\begin{remark}
The exponent $\frac{d}{p}$ in $\delta^{\frac{d}{p}}$ is optimal, as can be seen by considering the case when each $\mathbb{T}_j$ consists of a single tube passing through the origin; one can also derive from scaling considerations that this is the only possible exponent that makes \eqref{gauss} valid.  Estimates of this form have a number of applications; for instance, they were used in the first solution of the Vinogradov Main Conjecture in \cite{bdg}.
\end{remark}
 
The endpoint case $p=\frac{1}{d-1}$ of this inequality is the most difficult, and was only established for general dimension in \cite{guth} using techniques from algebraic topology related to the polynomial ham sandwich theorem; see also \cite{bg} for an extension to the case of tubular neighbourhoods of algebraic curves, and \cite{cv} for an alternate proof based on the Borsuk-Ulam theorem.  The non-endpoint cases $p>\frac{1}{d-1}$ of Theorem \ref{mlk} were established previously in \cite{bct} using a heat flow monotonicity method, which we now briefly review here (using slightly different notation and normalisations from \cite{bct}).  We rename the family $\mathbb{T}_j$ of tubes as $\Omega_j$, which we now endow with counting measure $\mu_j$.  The elements of $\Omega_j$ we now rename as $\omega_j$ instead of $T_j$, and the parameter $\delta$ we now rename as a time parameter $t$.  Each tube $\omega_j$ in $T_j$ can now be written in the form $\phi_j[\cdot](\omega_j)^{-1}( B^{d-1}(0,t) )$, where $B^{d-1}(0,t)$ is the ball of radius $t$ centred at the origin in $\R^{d-1}$, and $\phi_j[\cdot](\omega_j) \colon \R^d \to \R^{d-1}$ is an affine-linear map of the form
\begin{equation}\label{phui-form}
\phi_j[x](\omega_j) = x B_j(\omega_j) - v_j(\omega_j)
\end{equation}
for some vector $v_j(\omega_j) \in \R^{d-1}$ and some orthogonal matrix $B_j(\omega_j) \in \R^{d \times d-1}$.  Here and in the sequel we use $\R^{D_1 \times D_2}$ to denote the space of real matrices with $D_1$ rows and $D_2$ columns, and identify elements of $\R^D$ with row vectors, thus $\R^D \equiv \R^{1 \times D}$.  By hypothesis, the transversality condition \eqref{kak-trans} then holds whenever each $n_j \in \R^d$ is a unit vector in the left null space of $B_j(\omega_j)$ (i.e., $n_j B_j(\omega_j) = 0$) for some $\omega_j \in \Omega_j$.

The estimate \eqref{gauss} can then be rewritten as
$$ t^{-d} \int_{\R^d} \prod_{j \in [d]} \left(\int_{\Omega_j} 1_{B^{d-1}(0,t)}(\phi_j[x])\ d\mu_j\right)^p\ dx \lesssim_{A,p} \prod_{j \in [d]} \mu_j(\Omega_j)^p.$$
We now introduce the gaussian weights
$$ w_j[t,x] \coloneqq \gamma_t( \phi_j[x] )$$
where 
\begin{equation}\label{gammat}
\gamma_t(y) \coloneqq \exp( -yy^T / t^2 ) = \exp( - |y|^2/t^2)
\end{equation}
and $y^T$ denotes the transpose of $y$.  If we let $\mu_w$ denote the weighting of a measure $\mu$ on a space $\Omega$ by a non-negative weight $w \in L^1(\Omega,\mu)$, thus
$$ \int_{\Omega} f\ d\mu_w \coloneqq \int_\Omega f w\ d\mu$$
for any bounded measurable $f \colon \Omega \to \R$ (or equivalently $d\mu_w = w\ d\mu$), then we have
$$ \int_{\Omega_j} 1_{B^{d-1}(0,t)}(\phi_j[x])\ d\mu_j \lesssim (\mu_j)_{w_j[t,x]}(\Omega_j)$$
and it now suffices to establish the claim
\begin{equation}\label{tosh}
 t^{-d} \int_{\R^d} \prod_{j \in [d]} (\mu_j)_{w_j[t,x]}(\Omega_j)^p\ dx \lesssim_{A,p} \prod_{j \in [d]} \mu_j(\Omega_j)^p.
\end{equation}
It is not difficult to show that this claim is asymptotically true in the limit $t \to +\infty$ (this basically corresponds, after rescaling, to the case where all the tubes $T_j$ go through the origin).  The strategy in \cite{bct} is then to show that the left-hand side of \eqref{tosh} (or more precisely, a modified quantity comparable to this left-hand side) is monotone non-decreasing in $t$.

To motivate the argument, let us first restrict to the case when $p$ is a natural number.  At first glance this appears to be a severe restriction (in particular, one now cannot get close to the endpoint $p = \frac{1}{d-1}$ when $d \geq 3$); however, as observed in \cite{bct}, many algebraic manipulations that are valid for natural number $p$ can be usefully ``extrapolated'' to the case of fractional $p$.  In this case we can rewrite the product $\prod_{j \in [d]} (\mu_j)_{w_j[t,x]}^p\ dx$ as an integral over a certain product space.  Namely, we introduce the $d$-tuple of spaces
$$ \vec \Omega \coloneqq (\Omega_1,\dots,\Omega_d),$$
the $d$-tuple of measures
$$ \vec \mu \coloneqq (\mu_1,\dots,\mu_d)$$
and the $d$-tuple of (natural number) exponents
$$ \vec p \coloneqq (p_1,\dots,p_d)$$
where in our case we have $p_j = p$ for all $j \in [d]$, and then define the disjoint union $\Disjoint \vec \Omega$ of the spaces $\Omega_j$ as
\begin{align*}
\Disjoint \vec \Omega &\coloneqq \bigcup_{j \in [d]} \{j\} \times \Omega_j \\
&= \{ (j,\omega_j): j \in [d], \omega_j \in \Omega_j \}.
\end{align*}
We can view $\vec \mu$ (by abuse of notation) as a measure on this disjoint union by the formula
$$ \int_{\Disjoint \vec \Omega} f\ d\vec \mu = \sum_{j \in [d]} \int_{\Omega_j} f_j\ d\mu_j$$
for all bounded measurable $f \colon \Disjoint \vec \Omega \to \R$, where each $f_j \colon \Omega_j \to \R$ is a component of $f$, defined by
$$ f_j(\omega_j) \coloneqq f(j,\omega_j)$$
for $\omega_j \in \Omega_j$.  In particular, we can view all the weights $w_j[t,x]$ as the components of a single concatenated weight $w[t,x]: \Disjoint \vec \Omega \to \R$ defined by
$$ w[t,x](j,\omega_j) \coloneqq w_j[t,x](\omega_j),$$
and we have
$$ (\vec \mu_{w[t,x]})_j = (\mu_j)_{w_j[t,x]}.$$
Next, we define the product space
$$ \vec \Omega^{\vec p} \coloneqq \prod_{j \in [d]} \Omega_j^{p_j}$$
and product measure
$$ \vec \mu^{\vec p} \coloneqq \prod_{j \in [d]} \mu_j^{p_j}$$
with the corresponding weighted measure
$$ \vec \mu_{w[t,x]}^{\vec p} \coloneqq \prod_{j \in [d]} (\mu_j)_{w_j[t,x]}^{p_j}.$$
If we make the multi-index notation
\begin{equation}\label{multi}
\vec \mu(\vec \Omega)^{\vec p} \coloneqq \prod_{j \in [d]} \mu_j(\Omega_j)^{p_j}
\end{equation}
and observe from Fubini's theorem that
$$ \vec \mu(\vec \Omega)^{\vec p} = \vec \mu^{\vec p}( \vec \Omega^{\vec p})$$
then we may rewrite \eqref{tosh} as
$$
 t^{-d} \int_{\R^d} \int_{\vec \Omega^{\vec p}}\ d\vec \mu_{w[t,x]}^{\vec p} dx \lesssim_{A,p} \vec \mu(\vec \Omega)^{\vec p}.
$$
In \cite{bct}, this estimate was (essentially) established by first replacing it with an equivalent estimate
$$
 t^{-d} \int_{\R^d} \int_{\vec \Omega^{\vec p}} \det(M) \ d\vec \mu_{w[t,x]}^{\vec p} dx \lesssim_{A,p} \vec \mu(\vec \Omega)^{\vec p}.
$$
where $M: \vec \Omega^{\vec p} \to \R^{d \times d}$ is a certain matrix-valued function\footnote{This matrix was denoted $\mathbf{A}_*$ in \cite{bct}; its analogue in the current paper is introduced in \eqref{M-def} (for the curved Kakeya problem) or \eqref{mdef} (for the restriction problem).} that is close to a constant multiple $p(d-1) I_d$ of the identity matrix $I_d$ (after performing some initial reductions to make the normal vectors $n_j$ in \eqref{kak-trans} sufficiently close to the standard basis vectors $e_j$, where ``sufficiently close'' depends on how close $p$ is to the endpoint $\frac{1}{d-1}$).  The main reason for this weight is because the inverse matrix $M^{-1}$ naturally will appear in subsequent calculations, and the fact that this inverse $M^{-1}$ is not a polynomial function of the coefficients of $M$ will cause severe difficulties (particularly when one attempts to extrapolate to the case of non-integer $p$).  However, from the standard identity
\begin{equation}\label{Adj}
\det(M) I_d = \adj(M) M = M \adj(M)
\end{equation}
where $\adj(M)$ is the adjugate matrix of $M$, the presence of the weight $\det(M)$ will convert this inverse into an expression which is polynomial in the coefficients of $M$, which will allow one to extrapolate the calculations to non-integer values of $p$.

After some integration by parts and linear algebra manipulations, it was then shown in \cite{bct} that the derivative
$$ \partial_t \left( t^{-d} \int_{\R^d} \int_{\vec \Omega^{\vec p}} \det(M) \ d\vec \mu_{w[t,x]}^{\vec p} dx \right)$$
was non-negative, with the proof being of such a form that this monotonicity could also be extrapolated to the case of non-integer $p$ in the non-endpoint range $p > \frac{1}{d-1}$.  This concluded the proof of the non-endpoint case of Theorem \ref{mlk} in \cite{bct}.

In this paper, we revisit the arguments in \cite{bct}, now treating the case of fractional $p$ directly rather than by working primarily with natural number $p$ and appealing to extrapolation results at the end of the argument to extend to fractional $p$.  With this slightly different perspective, we now view $(\vec \Omega^{\vec p}, \vec \mu_{w[t,x]}^{\vec p})$ for fractional $p$ as a ``virtual measure space'', and expressions such as $\det(M)$ as ``virtual functions''.  As it turns out, even though these are not classical functions on a classical measure space, one can still define virtual functions in an abstract algebraic fashion, and construct a well-defined ``virtual integral'' of such functions on the virtual measure space $(\vec \Omega^{\vec p}, \vec \mu_{w[t,x]}^{\vec p})$.  Several of the laws of calculus (e.g., differentiation under the integral sign, or the change of variables formula) can then be rigorously established for this virtual integral, and the manipulations in \cite{bct} can now be done in this formalism directly for non-integer $p$ without appealing to any extrapolation theory.

One advantage of this perspective is that it extends readily to the variable-coefficient setting, allowing one for the first time to recover estimates in this setting with optimal bounds (except for a polynomial blowup as the exponent $p$ approaches the endpoint $\frac{1}{d-1}$).  To state these estimates, we introduce some further notation.  We use $\nabla_x$ to denote the gradient operator in $\R^d$,
$$ \nabla_x \coloneqq ( \partial_{x_1}, \dots, \partial_{x_d} )$$
where $\partial_{x_j}$ denotes partial differentiation in the direction of the $j^{\mathrm{th}}$ standard coordinate $x_j$ of $\R^d$.  Note that we view $\nabla$ as a row vector of differential operators; we shall also frequently use the transpose
$$\nabla_x^T = ( \partial_{x_1}, \dots, \partial_{x_d} )^T.$$
Thus for instance if $\phi: \R^d \to \R^{d-1}$ is a differentiable map, then for any $x \in \R^d$, $\nabla_x^T \phi(x) \in \R^{d \times d-1}$ is a $d \times d-1$ matrix whose rows are $\partial_{x_1} \phi(x), \dots, \partial_{x_d} \phi(x)$.  We also will sometimes need higher order derivatives $\nabla_x^{\otimes k} \otimes \phi$ of a map such as $\phi: \R^d \to \R^{d-1}$, which will be a $d \times \dots \times d \times d-1$ tensor (with $k$ copies of $d$) whose coefficients are of the form $\partial_{x_{i_1}} \dots \partial_{x_{i_k}} \phi_j(x)$ for $i_1,\dots,i_k \in [d]$ and $j \in [d-1]$, with $\phi_j$ denoting the coefficients of $\phi$.

Our main result (which answers in the affirmative a question\footnote{In \cite{bct}, the functions $\phi_j$ are taken to be of the specific form $\phi_j[x]\colon (\xi_0, \omega) \mapsto \nabla_\xi \Phi_j( x, \xi_0 ) - \omega$ for some phase function $\Phi_j \colon V \times \R^{d-1} \to \R$, with $\Omega_j = \R^{d-1} \times\R^{d-1}$ being parameterised as $(\xi_0,\omega)$, as this is the case of interest in applications to oscillatory integrals; however, the analysis in that paper extends without difficulty to the more general setting considered in Theorem \ref{curv-kak}.} in \cite[Remark 6.6]{bct}) is as follows.

\begin{theorem}[Curved multilinear Kakeya estimate]\label{curv-kak}  Let $\frac{1}{d-1} < p \leq \infty$ be an exponent, and let $A \geq 2$ be a parameter.  Let $V \subset B^d(0,A)$ be an open subset of the ball $B^d(0,A)$ of radius $A$ centred at the origin in $\R^d$.  Let $(\vec \Omega, \vec \mu)$ be a $d$-tuple of finite measure spaces $(\Omega_j,\mu_j)$, and for each $j \in [d]$, let $\phi_j \colon V \times \Omega_j \to \R^{d-1}$ be a measurable map (denoted $(x,\omega_j) \mapsto \phi_j[x](\omega_j)$) obeying the following axioms:
\begin{itemize}
\item[(i)] (Regularity) For each $j \in [d]$ and $\omega_j \in \Omega_j$, the map $\phi_j[\cdot](\omega_j) \colon x \mapsto \phi_j[x](\omega_j)$ is a $C^2$ map with norm bounds
\begin{equation}\label{c2}
 |\nabla_x^{\otimes a} \otimes \phi_j[x](\omega_j)| \leq A
\end{equation}
for all $x \in V$ and $a=1,2$, where we use the usual Euclidean norm on tensor product spaces such as $\R^{d \times d \times d-1}$.
\item[(ii)]  (Submersion)  For each $j \in [d]$, $\omega_j \in \Omega_j$, and $x \in V$, the derivative matrix $\nabla_x^T \phi_j[x](\omega_j) \in \R^{d \times d-1}$ is of full rank, with the $d-1$ non-trivial singular values lying between $A^{-1}$ and $A$. In particular, $\phi_j[\cdot](\omega_j) \colon V \to \R^{d-1}$ is a $C^2$ submersion.
\item[(iii)]  (Transversality) For any $x \in U$ and $\omega_j \in \Omega_j$ for $j \in [d]$, we have the lower bound
$$ \left|\bigwedge_{j \in [d]} n_j\right| \geq A^{-1}$$
whenever $n_j \in \R^d$ is a unit vector in the left null space of $\nabla_x^T \phi_j[x](\omega_j) \in \R^{d \times d-1}$.
\end{itemize}
Let $V_{1/A}$ be the subset of $V$ defined by
\begin{equation}\label{uua}
V_{1/A} \coloneqq \{ x \in V:  B^d(x,1/A) \subset V \}.
\end{equation}
Then for any $0 < t \leq A$, one has
\begin{equation}\label{dop}
\begin{split}
& \left\| \prod_{j \in [d]} \int_{\Omega_j} 1_{B^{d-1}(0,t)}( \phi_j[x](\omega_j) )\ d\mu_j(\omega_j) \right\|_{L^p(V_{1/A})} \\
&\quad \leq A^{O(1)} \left(d-1-\frac{1}{p}\right)^{-O(1)} t^{\frac{d}{p}} \prod_{j \in [d]} \mu_j(\Omega_j).
\end{split}
\end{equation}
\end{theorem}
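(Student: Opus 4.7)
The plan is to adapt the heat flow monotonicity method of \cite{bct} to the variable-coefficient setting, carrying out the computation directly for fractional $p$ via the virtual measure space formalism rather than by extrapolating from integer $p$. Begin by replacing indicators with Gaussians: setting $w_j[t,x](\omega_j) \coloneqq \gamma_t(\phi_j[x](\omega_j))$ as in \eqref{gammat}, the submersion hypothesis (ii) gives $1_{B^{d-1}(0,t)}(\phi_j[x](\omega_j)) \lesssim_A w_j[t,x](\omega_j)$, so \eqref{dop} reduces to
\[
t^{-d} \int_{V_{1/A}} \prod_{j \in [d]} (\mu_j)_{w_j[t,x]}(\Omega_j)^p \, dx \leq A^{O(1)} \left(d-1-\tfrac{1}{p}\right)^{-O(1)} \prod_{j \in [d]} \mu_j(\Omega_j)^p.
\]
Next, partition $V_{1/A}$ into balls of radius $c(p)/A$ and apply an affine change of variables on each so that, throughout that ball, the unit left-null vector $n_j(x,\omega_j)$ of $\nabla_x^T \phi_j[x](\omega_j)$ lies within $\eta$ of the standard basis vector $e_j$, where $\eta = \eta(p)$ is chosen small in terms of $d-1-1/p$. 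By transversality this is possible at a cost of $A^{O(1)} \eta^{-O(1)}$, which is the source of the $(d-1-1/p)^{-O(1)}$ factor.

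On each normalized ball, form the virtual product measure space $(\vec\Omega^{\vec p}, \vec\mu_{w[t,x]}^{\vec p})$ with $\vec p = (p,\dots,p)$, and define
\[
M(x,\vec\omega) \coloneqq p \sum_{j \in [d]} n_j(x,\omega_j) n_j(x,\omega_j)^T \in \R^{d \times d},
\]
which is within $O(\eta)$ of $p(d-1) I_d$, so $\det(M)$ is pointwise comparable to $(p(d-1))^d$. Consider the quantity
\[
Q(t) \coloneqq t^{-d} \int_{V_{1/A}} \int_{\vec\Omega^{\vec p}} \det(M) \, d\vec\mu_{w[t,x]}^{\vec p}(\vec\omega) \, dx,
\]
interpreted as a virtual integral. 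Because $\det(M)$ is polynomial in the entries of $M$, this is well defined for fractional $p$ and is comparable to the left-hand side of the previous display.

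The analytic crux is to differentiate $Q(t)$ in $t$ and reorganize. Using $\partial_t \gamma_t(y) = \frac{t}{2}\Delta_y \gamma_t(y) + \frac{d-1}{t}\gamma_t(y)$, the submersion hypothesis to convert $\Delta_y$ at $y = \phi_j[x]$ into $\Delta_x$ acting on $w_j[t,x]$ modulo curvature corrections from $\nabla_x^{\otimes 2}\phi_j$, integration by parts in $x$, and the identity \eqref{Adj} to rewrite anticipated $M^{-1}$ factors as the polynomial expression $\adj(M)/\det(M)$, one reorganizes $\partial_t Q(t)$ as the sum of a manifestly non-negative virtual integral (essentially a sum of squares in the heat flow gradient of $w$) and a remainder of size $O(A^{O(1)}/t)\cdot Q(t)$ coming from the curvature terms and from the boundary of $V_{1/A}$. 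This yields a Gr\"onwall-type inequality giving $Q(t) \lesssim A^{O(1)} Q(A)$ for all $0 < t \leq A$, and the trivial bound $Q(A) \lesssim \prod_{j \in [d]} \mu_j(\Omega_j)^p$ (from $|V_{1/A}| \lesssim A^d$, $w_j \leq 1$, and $\det(M) \lesssim 1$) then closes the argument.

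The principal obstacle will be justifying the monotonicity step rigorously in the virtual integral formalism for non-integer $p$ while simultaneously absorbing the curvature error terms that do not appear in the flat case of \cite{bct}. Setting up the virtual integral so that differentiation under the integral sign, the change of variables formula, and integration by parts in $x$ all apply to polynomial expressions in the weights $w_j[t,x]$ is the key abstract preparation; the concrete computation then parallels \cite{bct} with the additional bookkeeping needed to show that every term generated by $\nabla_x^{\otimes 2}\phi_j$ is dominated by $A^{O(1)}/t$ times the main term, a step in which the restriction $t \leq A$ in the statement will be essential.
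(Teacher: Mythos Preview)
Your overall architecture matches the paper's: Gaussian replacement, localization to small balls with normalization of the null directions, a virtual integral of $\det(M)$ weighted by the Gaussians, and a heat-flow monotonicity computation. However, the error-control step does not close.

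You assert that the curvature and boundary terms contribute a remainder of size $O(A^{O(1)}/t)\cdot Q(t)$ to $\partial_t Q$, and that Gr\"onwall then gives $Q(t) \lesssim A^{O(1)} Q(A)$. But a differential inequality $\partial_t Q(t) \geq -\tfrac{C}{t}\, Q(t)$ integrates to $Q(t) \leq (A/t)^{C}\, Q(A)$, which is exactly the polynomial-in-$t$ loss (the $R^\eps$ loss in the usual language) that Theorem~\ref{curv-kak} is supposed to eliminate. An error that is multiplicative in $Q$ with a $1/t$ coefficient is not integrable in $dt/t$ over $(0,A]$, so Gr\"onwall cannot produce a bounded constant here.

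The paper avoids this by an induction on the exponent: for $\alpha \le d/p$ one proves $P(\alpha-0.1)\Rightarrow P(\alpha)$, where $P(\alpha)$ is the estimate with $t^{\alpha}$ on the right. After the integration by parts in $x$ (via the virtual vector field $F=\Sigma_{\vec p}(B\phi^T)$ with $B=\nabla_x^T\phi$), the lower-order terms $S_1,S_2$ each carry an extra factor of $\phi$; the pointwise bound $|\phi|\,w_j[t,x]\lesssim t\,w_j[t/2,x]$ gains a factor of $t$, and the induction hypothesis $P(\alpha-0.1)$ then bounds the result \emph{absolutely} by $O(t^{1-0.1p})$, which is integrable since $p<2$. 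Thus the error is not controlled by $Q(t)$ itself but by a strictly weaker quantity. Two smaller discrepancies: the paper's $M$ is $\Sigma_{\vec p}(BB^T)$ with $B=\nabla_x^T\phi$ (this, not $p\sum_j n_jn_j^T$, is what the integration by parts in $x$ naturally produces, and it is a genuine virtual function in the sense of Section~\ref{fractional}); and the paper does not pass through $\Delta_y\to\Delta_x$ via the submersion, but works directly with $t\partial_t w=\tfrac{2}{t^2}\phi\phi^T w$ and the divergence of $F$.
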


We prove this theorem in Section \ref{curv-sec}, after setting up the formalism of virtual integration in Section \ref{fractional}.  The proof largely follows the proof of the multilinear Kakeya estimate in \cite{bct} as sketched above; the variable coefficient nature of $\phi_j$ introduces some additional lower order error terms, mostly arising from the fact that the virtual matrix $M = M[x]$ now depends (smoothly) on the spatial parameter $x$, but these error terms can be easily handled by appealing to an induction hypothesis in which the exponent $\frac{d}{p}$ in the $t^{\frac{d}{p}}$ factor is replaced by a smaller quantity.  We remark that the algebraic topology arguments of \cite{guth}, \cite{bg}, \cite{cv} already establish Theorem \ref{curv-kak} in the case when the $\phi_j$ are polynomials of bounded degree, but do not seem to extend readily to the case of non-algebraic maps $\phi_j$.

\begin{remark}
By restricting attention to maps $\phi_j$ of the affine-linear form \eqref{phui-form}, we see that Theorem \ref{curv-kak} implies a local version of Theorem \ref{mlk} in which the $L^p$ norm is restricted to (say) $B^d(0,1)$, and $\delta$ is also restricted to be at most $1$; these restrictions can then be easily lifted by a scaling argument, so that the full strength of Theorem \ref{mlk} becomes a corollary of Theorem \ref{curv-kak}.  This implication also shows that the estimate \eqref{dop} is sharp except for the factors $A^{O(1)} (d-1-\frac{1}{p})^{-O(1)}$.
\end{remark}

\begin{remark}
The methods of \cite[\S 6]{bct} allow one to establish a weaker form of \eqref{dop} in which an additional logarithmic factor $\log^{O(1)}(2 + \frac{1}{t})$ is inserted.  Roughly speaking (and following the notation from \cite[\S 5]{aspects}), if one lets $C_{\mathrm{CurvyKak}}(t)$ denote the best constant replacing the quantity $A^{O(1)} (d-1-\frac{1}{p})^{-O(1)}$ in \eqref{dop}, it is not difficult to show a recursive inequality of the form
$$ \mathcal{C}_{\mathrm{CurvyKak}}(t) \lesssim \mathcal{C}_{\mathrm{Kak}}(\sqrt{t}) \mathcal{C}_{\mathrm{CurvyKak}}(\sqrt{t}) $$
(cf. \cite[Proposition 5.3]{aspects}), where $\mathcal{C}_{\mathrm{Kak}}$ is an analogue of $C_{\mathrm{CurvyKak}}$ in the case when the maps $\phi_j$ are affine-linear.  Theorem \ref{mlk} implies that $\mathcal{C}_{\mathrm{Kak}}(\sqrt{t})$ is bounded, and the claim now follows by a standard iteration argument.  We leave the details to the interested reader. We remark that this argument can also be combined with a similar argument in \cite{guth-easy} to derive a similar logarithmically lossy bound using the Loomis-Whitney inequality \cite{loomis} in place of the multilinear Kakeya inequality (if one carefully optimises the quantitative bounds arising from the induction on scales argument used in \cite{guth-easy},following the analysis in \cite[\S 4.3]{aspects}).
\end{remark}

\begin{remark}
The endpoint $p = \frac{1}{d-1}$ of Theorem \ref{curv-kak} (deleting the $(d-1-\frac{1}{p})^{-O(1)}$ factor) is not directly addressed by our methods; however, by setting $p = \frac{1}{d-1} + \frac{1}{\log(2 + \frac{1}{t})}$ and applying H\"older's inequality one can recover the endpoint result with a logarithmic loss, thus matching the previously known results in this case.  When $d=2$, the endpoint $p=1$ of Theorem \ref{curv-kak} can be established by direct calculation; in view of this, as well as the constant coefficient result in \cite{guth}, it is natural to conjecture that in higher dimensions Theorem \ref{curv-kak} holds at the endpoint (again deleting the $(d-1-\frac{1}{p})^{-O(1)}$ factor, of course, and perhaps imposing some additional regularity hypotheses on the $\phi_j$).  However this does not seem to be achievable purely from the techniques used in the current paper, though there is still the possibility that some other, even more precise induction-on-scales type argument may still be effective at the endpoint.
\end{remark}

\subsection{Multilinear restriction estimates}

We now adapt these arguments to the oscillatory integral setting, beginning with the ``constant coefficient'' case of multilinear restriction estimates.  Let $S_1,\dots,S_d \subset \R^d$ be $d$ smooth hypersurfaces in $\R^d$, which for simplicity we will take to be graphs
$$ S_j = \{ (\xi, h_j(\xi)): \xi \in U_j \}$$
for some open bounded subsets $U_j$ of $\R^{d-1}$, where each $h_j: U_j \to \R$ is a smooth compactly supported function.  We assume the transversality condition
\begin{equation}\label{transverse}
 \left| \bigwedge_{j \in [d]} n_j(\xi_j)\right| \geq A^{-1} 
\end{equation}
for some constant $A>0$ and all $\xi_j \in U_j$, $j \in [d]$, where 
\begin{equation}\label{njxi}
 n_j(\xi_j) \coloneqq (-\nabla_\xi h_j(\xi_j),1)
\end{equation}
is the (non-unit) normal to $S_j$ at $(\xi_j,h_j(\xi_j))$, and $\nabla_\xi = (\partial_{\xi_1},\dots,\partial_{\xi_{d-1}})$ is the gradient in the $\xi$ variable; geometrically, this means that if $v_j$ is a unit normal to a point in $S_j$ for each $j \in [d]$, then the $v_1,\dots,v_d$ never lie close to a hyperplane through the origin.  In applications, four examples of $S_j$ are of particular interest:

\begin{itemize}
\item (Loomis-Whitney case) $U_j \subset \R^{d-1}$, and each $h_j(\xi) = \xi \cdot v_j + \tau_j$ is an affine-linear function of $\xi$ on $U_j$.
\item (Paraboloid case) $U_j \subset \R^{d-1}$, and $h_j(\xi) = |\xi|^2$ for all $\xi \in U_j$.
\item (Cone case) $U_j \subset \{ \xi \in \R^{d-1}: 1 \leq |\xi| \leq 2 \}$, and $h_j(\xi) = |\xi|$ for all $\xi \in U_j$.
\item (Sphere case) $U_j \subset \{ \xi \in \R^{d-1}: |\xi| \leq 1-\delta \}$ for some $\delta>0$, and $h_j(\xi) = \pm (1 - |\xi|^2)$ for all $\xi \in U_j$.
\end{itemize}
In each case one of course would need to impose additional separation conditions on the $U_i$ (or on the velocities $v_j$ in the Loomis-Whitney case) in order to obtain the transversality condition \eqref{transverse}.

For each $j$ and each $f_j \in L^1(U_j \to \C)$, we define the extension operator ${\mathcal E}_j f_j \in L^\infty(\R^d \to \C)$ by the formula
\begin{equation}\label{ext-def}
 {\mathcal E}_j f_j( x', x_d ) \coloneqq \int_{U_j} e^{2\pi i (x' \xi^T + x_d h_j(\xi))} f_j(\xi)\ d\xi
\end{equation}
for all $x' \in \R^{d-1}$ and $x_d \in \R$.  Using the $d-1$-dimensional Fourier transform
$$ \hat F(\xi) \coloneqq \int_{\R^{d-1}} F(x) e^{-2\pi i x' \xi^T}\ dx'$$
and its inverse
$$ \check f(x') \coloneqq \int_{\R^{d-1}} f(\xi) e^{2 \pi i x' \xi^T}\ d\xi$$
one can view $x' \mapsto {\mathcal E}_j f_j( x', x_d )$ as the inverse Fourier transform of $\xi \mapsto e^{2\pi i x_d h_j(\xi)} f_j(\xi)$. In Section \ref{rest-sec}, we will establish the following claim.

\begin{theorem}[Multilinear restriction theorem]\label{lmrc}
Let $\frac{1}{d-1} < p \leq \infty$ be an exponent, and let $A \geq 2$ be a parameter.  Let $M$ be a sufficiently large natural number, depending only on $d$.  For $j \in [d]$, let $U_j$ be an open subset of $B^{d-1}(0,A)$, and let $h_j \colon U_j \to \R$ be a smooth function obeying the following axioms:
\begin{itemize}
\item[(i)] (Regularity)  For each $j \in [d]$ and $\xi \in U_j$, one has
\begin{equation}\label{regular}
 |\nabla_\xi^{\otimes m} \otimes  h_j(\xi)| \leq A
\end{equation}
for all $1 \leq m \leq M$.
\item[(ii)]  (Transversality)  One has \eqref{transverse} whenever $\xi_j \in U_j$ for $j \in [d]$.
\end{itemize}
Let $U_{j,1/A} \subset U_j$ be the sets
\begin{equation}\label{ujdef}
U_{j,1/A} \coloneqq \{ \xi \in U_j: B^{d-1}(\xi,1/A) \subset U_j \}.
\end{equation}
Then one has
$$
\left\| \prod_{j \in [d]} {\mathcal E}_j f_j \right\|_{L^{2p}(\R^d)} 
\leq A^{O(1)} \left(d-1-\frac{1}{p}\right)^{-O(1)} \prod_{j \in [d]} \|f_j \|_{L^2(U_{j,1/A})}$$
for any $f_j \in L^2(U_{j,1/A} \to \C)$, $j \in [d]$, extended by zero outside of $U_{j,1/A}$.
\end{theorem}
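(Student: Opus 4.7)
My plan is to mimic, in the oscillatory-integral setting, the direct heat-flow monotonicity argument used to prove Theorem \ref{curv-kak}, now adapted to complex-valued measures and weights. Before invoking the monotonicity, I would perform the usual preliminary reductions: partition each $U_{j,1/A}$ into $O_A(1)$ small pieces on which the normal $n_j(\xi)$ lies in a narrow (exponent-dependent) neighbourhood of a fixed standard basis vector $e_j$, so that the transversality matrix constructed below is close to a multiple of $I_d$.

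The central identity I would exploit is
\begin{equation*}
|\mathcal{E}_j f_j(x)|^2 \;=\; \int_{\Omega_j} e^{2\pi i x\,\Phi_j(\omega_j)^T}\, f_j(\xi)\,\overline{f_j(\zeta)}\, d\xi\, d\zeta,
\end{equation*}
where $\Omega_j := U_{j,1/A} \times U_{j,1/A}$, $\omega_j = (\xi,\zeta)$, and $\Phi_j(\xi,\zeta) := (\xi - \zeta,\, h_j(\xi) - h_j(\zeta))$. I would then introduce the complex heat-flow weights $w_j[t,x](\omega_j) := \gamma_t(\Phi_j(\omega_j))\, e^{2\pi i x\,\Phi_j(\omega_j)^T}$ (with $\gamma_t$ as in \eqref{gammat}) and the complex reference measure $d\mu_j := f_j(\xi)\overline{f_j(\zeta)}\,d\xi\,d\zeta$. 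Extending the virtual integration formalism of Section \ref{fractional} to complex weights, and taking $\vec p = (p,\dots,p)$, I would study the modified quantity
\begin{equation*}
Q(t) \;:=\; t^{-dp} \int_{\R^d} \int_{\vec\Omega^{\vec p}} \det(M[t,x,\vec\omega])\, d\vec\mu_{w[t,x]}^{\vec p}(\vec\omega)\, dx,
\end{equation*}
where $M[t,x,\vec\omega] \in \R^{d \times d}$ is a virtual matrix assembled from the Jacobians $\nabla_\xi^T\Phi_j$, chosen so that after the reductions above $M \approx p(d-1)\,I_d$ and so that the adjugate identity \eqref{Adj} converts every occurrence of $M^{-1}$ into a polynomial expression in the entries of $M$.

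The heart of the argument is to verify that $\partial_t Q$ has the correct sign, so that $Q$ is monotone between its two asymptotic regimes: as $t \to \infty$, $\gamma_t \to 1$ on compact sets and $Q(t)$ reproduces (up to normalisation) $\|\prod_j \mathcal{E}_j f_j\|_{L^{2p}}^{2p}$; at a smaller $t \sim_A 1$, Plancherel-type manipulations directly bound $Q(t)$ by $A^{O(1)}(d-1-\tfrac{1}{p})^{-O(1)}\prod_j\|f_j\|_{L^2}^{2p}$. Mirroring the Kakeya calculation, I would convert the $t$-derivative of $\gamma_t$ into a Laplacian in $y = \Phi_j$ via the identity $\partial_t \gamma_t - \tfrac{d-1}{t}\gamma_t = \tfrac{t}{2}\Delta_y\gamma_t$, push this through $\Phi_j$ by the chain rule to obtain a second-order spatial operator in $x$, integrate by parts in $x$, and apply the adjugate identity to extract a quadratic form of the right sign. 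Lower-order corrections from $\nabla_\xi^{\otimes k} h_j$ for $k \geq 2$ (the genuinely curved part of the phase), together with cross-terms produced when the $t$-derivative also differentiates the oscillatory exponential, would be absorbed by an induction hypothesis in which the implicit exponent is decreased slightly, exactly as in Theorem \ref{curv-kak}.

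The main obstacle I foresee is executing the monotonicity cleanly in the presence of the complex oscillatory factor $e^{2\pi i x\Phi_j^T}$. In the Kakeya case the weights $w_j$ are real and non-negative, and the key inequality reduces (after integration by parts and invoking the adjugate identity) to a Cauchy--Schwarz-type bound on a Gram matrix; here, differentiating $e^{2\pi i x\Phi_j^T}$ produces extra factors of $\Phi_j$ that must be paired against derivatives of $\gamma_t$. I expect the resulting imaginary cross-terms either to cancel by the $\xi \leftrightarrow \zeta$ antisymmetry of $\Phi_j$ (which sends $\Phi_j \mapsto -\Phi_j$ and complex-conjugates the measure) or else to be small enough to fit inside the induction buffer. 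Making this rigorous inside a complex-weight virtual-integration formalism, and checking that $\det(M)$ remains effectively real and close to its reference value so that the positive-definite piece of $\partial_t Q$ dominates, is the technical heart of the proof.
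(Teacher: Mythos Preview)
Your proposal diverges from the paper's argument at a crucial point, and I do not see how to close the gap.  The heat-flow monotonicity in Section~\ref{fractional} (and specifically Theorem~\ref{non-neg}) is a genuine \emph{positivity} statement: it requires the underlying measures $\mu_j$ and weights $w_j$ to be non-negative, so that the fractional powers $\mu_j(\Omega_j)^{p_j-a_j}$ in \eqref{formula} are well-defined real numbers and the quadratic form $\{\phi,\phi\}$ integrates to something with a sign.  Your complex measure $d\mu_j = f_j(\xi)\overline{f_j(\zeta)}\,d\xi\,d\zeta$ and complex weight $w_j[t,x]$ destroy both of these.  The $\xi\leftrightarrow\zeta$ symmetry you invoke does force the total mass $\int w_j\,d\mu_j$ to be real, but it need not be non-negative (it is not a squared quantity for intermediate $t$), and the intermediate integrals appearing in \eqref{formula} have no reason to be real at all.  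Without a replacement for Theorem~\ref{non-neg}(ii), the ``Cauchy--Schwarz-type bound on a Gram matrix'' you allude to simply has no analogue.  A secondary issue is that $|w_j[t,x]| = \gamma_t(\Phi_j)$ is independent of $x$, so the $\R^d$-integral defining your $Q(t)$ is only conditionally convergent, and the integration-by-parts manipulations become delicate.

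The paper resolves this by never working with complex measures.  Instead it passes to the locally averaged estimate (Theorem~\ref{lamrc}) and replaces the oscillatory function $\mathcal{E}_j f_j$ by a non-negative proxy, namely the squared Gabor-type transform $G_{j,r}$ of \eqref{gr-def}.  Plancherel's theorem (equation \eqref{job}) shows that integrating $G_{j,r}$ in the frequency variable recovers a local energy of $\mathcal{E}_j f_j$, while the dispersion relation of $\mathcal{E}_j$ forces $G_{j,r}(\cdot,\xi_j)$ to be approximately constant along the transversal direction $n_j(\xi_j)$.  This places one in a setting formally identical to the curved Kakeya problem, with genuinely non-negative weights, and the real-valued machinery of Section~\ref{fractional} applies directly.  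The price is an additional layer of comparison lemmas (Lemma~\ref{compar}) relating the ``horizontal'' and ``spatial'' virtual integrals as the scale $r$ varies; this is where the regularity hypothesis on $h_j$ is actually used.
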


\begin{remark}
In this paper we do not specify the precise amount of regularity $M$ required in the hypotheses.  In view of previous results in \cite{bct}, it is likely that $M$ can be taken to be $2$, but we do not pursue this question here.  The exponent $p$ here corresponds to what would be written as $p/2$ in other literature, but we have chosen this normalisation to make the multilinear restriction theory align more closely with the multilinear Kakeya theory.
\end{remark}

\begin{remark}
In \cite{bct} (using the optimised quantitative analysis from \cite[\S 4.3]{aspects}), the multilinear Kakeya estimate was used to give\footnote{As it turns out, the original presentation of this argument in \cite{bct} had a slight gap in the case $p<\frac{1}{2}$, but later treatments have addressed the issue; see Appendix \ref{erratum}.} a local version of the above theorem in which the $L^p(\R^d)$ norm was replaced by an $L^p(B(0,R))$ norm for any $R \geq 2$, and an additional loss of $\log^{O(1)} R$ appeared on the right-hand side.   An alternate proof of such a local estimate was later given\footnote{Strictly speaking, the results in \cite{bejenaru} only claim the estimate with a loss of $O_\eps(R^\eps)$ for any $\eps>0$, but it is likely that by optimising the argument as in \cite[\S 4.3]{aspects} one can tighten this loss to a logarithmic loss.} in \cite{bejenaru}, using (a discrete form of) the Loomis-Whitney inequality in place of the multilinear Kakeya estimate, but now requiring a high degree of regularity $M$. 
\end{remark}

\begin{remark}
In the case where the surfaces $S_j$ have curvature uniformly bounded away from zero (so that the Fourier transform of the surfaces exhibits some decay at infinity), Theorem \ref{lmrc} (with a worse dependence of constants on $p$) was essentially obtained in \cite[Lemma A3]{bg}, adapting the ``epsilon removal'' argument from \cite{tao-br}.  It should be possible to also recover Theorem \ref{lmrc} in this case with the polynomial dependence of constants on $p$ from the results of \cite{bct} combined with the epsilon removal argument of Bourgain \cite{borg} (as detailed in \cite[\S 6]{TV}), and keeping careful track of the bounds, but we do not attempt to do so here.  In contrast, our proof of Theorem \ref{lmrc} is insensitive to curvature hypotheses, and in particular gives uniform bounds as the curvature degenerates to zero, which will be useful in our next application to variable-coefficient oscillatory integral estimates.  
\end{remark}

\begin{remark}
The endpoint $p = \frac{1}{d-1}$ of the above theorem (with the $(d-1-\frac{1}{p})^{-O(1)}$ factor deleted) remains an open problem, except in the two-dimensional setting $d=2$, $p=1$ in which case the claim can be verified easily from Plancherel's theorem.  The usual extrapolation arguments applied to this theorem only give an estimate in which the endpoint space $L^{\frac{2d}{d-1}}$ is replaced by an Orlicz norm $L^{\frac{2d}{d-1}} \log^{O(1)} L$.
\end{remark}

Our proof of Theorem \ref{lmrc} also gives a more general ``locally averaged'' estimate, which does not appear to have been explicitly observed previously even in the local setting.  For any function $f \in L^2_{\mathrm{loc}}(\R^d \to \C)$ and any radius $r>0$, define the local energy $\energy_{r}[f] \colon \R^d \to \R^+$ by the formula
\begin{equation}\label{Avg-def}
 \energy_{r}[f](x',x_d) \coloneqq \int_{\R^{d-1}} \rho_{x',r}(y') |f(y',x_d)|^2\ dy',
\end{equation}
where 
\begin{equation}\label{rhoar}
 \rho_{x',r}(y') \coloneqq \left\langle \frac{y'-x'}{r} \right\rangle^{-10d^2} 
\end{equation}
and we use the Japanese bracket
$$ \langle x \rangle \coloneqq (1 + |x|^2)^{1/2}.$$
The exponent $10d^2$ here is somewhat arbitrary and could be replaced by any other large quantity; we fix it as $10d^2$ here for sake of concreteness.  Informally, $\energy_{r}[f](x',x_d)$ measures the amount of energy of $f$ passing through (or near) the disk $B^{d-1}(x',r) \times \{x_d\}$.

\begin{theorem}[Locally averaged multilinear restriction theorem]\label{lamrc}  With the hypotheses of Theorem \ref{lmrc}, one has
\begin{equation}\label{dp}
\left\| \prod_{j \in [d]} \energy_{r}[{\mathcal E}_j f_j] \right\|_{L^{p}(\R^d)} \leq A^{O(1)} \left(d-1 - \frac{1}{p}\right)^{O(1)} r^{\frac{d}{p}} \prod_{j \in [d]} \| f_j \|_{L^2(U_{j,1/A})}^2
\end{equation}
whenever $f_j \in L^2(U_{j,1/A} \to \C)$ for $j \in [d]$, and $r \geq 1$.
\end{theorem}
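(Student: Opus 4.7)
\emph{Plan.} I expect Theorem~\ref{lamrc} to be proved via a wave packet decomposition matched to the scale $r$, combined with the curved multilinear Kakeya estimate of Theorem~\ref{curv-kak}. The weight $\rho_{x',r}$ is tuned precisely to the scale at which the wave packets coming from the extensions ${\mathcal E}_j f_j$ become essentially orthogonal, so $\energy_r[{\mathcal E}_j f_j]$ should reduce to a spatial count of transverse tubes of width $\sim r$; this is exactly the setting of Theorem~\ref{curv-kak}.

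More precisely, for each $j \in [d]$ I would partition the frequency domain $U_j$ by balls $B^{d-1}(\xi_{j,k}, 1/r)$ along a $1/r$-separated grid and decompose $f_j = \sum_k f_{j,k}$ with $f_{j,k}$ localized to one such ball, so that $\sum_k \|f_{j,k}\|_{L^2}^2 \sim \|f_j\|_{L^2}^2$ by almost-orthogonality. The uncertainty principle identifies each ${\mathcal E}_j f_{j,k}$ with a wave packet concentrated, up to Schwartz tails, on a tube $T_{j,k}$ of cross-section $\sim r$ in the $x'$ variable and oriented along the surface normal direction $n_j(\xi_{j,k}) = (-\nabla_\xi h_j(\xi_{j,k}), 1)$. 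The key observation is that $\rho_{x',r}$, having Fourier transform supported (to Schwartz tolerance) on the scale $1/r$, suppresses cross-terms between distinct wave packets: the oscillatory integral
\[
\int \rho_{x',r}(y') \, {\mathcal E}_j f_{j,k}(y',x_d) \, \overline{{\mathcal E}_j f_{j,k'}(y',x_d)} \, dy'
\]
decays polynomially in $r|\xi_{j,k} - \xi_{j,k'}|$. After summing the diagonal contributions and absorbing the off-diagonal tails, one should obtain a pointwise bound of the form
\[
\energy_r[{\mathcal E}_j f_j](x',x_d) \lesssim \sum_k \|f_{j,k}\|_{L^2}^2 \, 1_{T_{j,k}^*}(x',x_d)
\]
where $T_{j,k}^*$ is a mild fattening of $T_{j,k}$.

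Since the transversality hypothesis~\eqref{transverse} is exactly the Kakeya transversality of the tube families $\{T_{j,k}\}_{k}$ across the indices $j \in [d]$, a weighted version of Theorem~\ref{curv-kak} at scale $\delta = r$ would then yield
\[
\left\| \prod_{j \in [d]} \sum_k \|f_{j,k}\|_{L^2}^2 \, 1_{T_{j,k}^*} \right\|_{L^p(\R^d)} \lesssim r^{d/p} \prod_{j \in [d]} \sum_k \|f_{j,k}\|_{L^2}^2 \sim r^{d/p} \prod_{j \in [d]} \|f_j\|_{L^2}^2,
\]
which is the desired bound \eqref{dp}. The factor $r^{d/p}$ enters through the $\delta^{d/p}$ factor in the Kakeya estimate, with $\delta$ now set equal to $r$.

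\emph{Main obstacle.} The chief technical difficulty lies in the orthogonalization step: although the rapid decay of the wave packet cross-terms is morally clear from the Fourier localization of $\rho_{x',r}$, turning this into a clean pointwise bound on $\energy_r[{\mathcal E}_j f_j]$ requires carefully summing the contributions of many overlapping wave packets (in both the same frequency annulus and distant ones) without spoiling the stated polynomial dependence on $A$ and on $d-1-\tfrac{1}{p}$. A secondary obstacle is importing the appropriate weighted version of Theorem~\ref{curv-kak}, in which the tubes $T_{j,k}^*$ carry weights $\|f_{j,k}\|_{L^2}^2$; this should follow from the counting version by a routine rescaling or quantization argument, but the constants need to be tracked carefully to remain consistent with those in~\eqref{dp}.
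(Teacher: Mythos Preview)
Your approach has a genuine gap: it can at best recover the \emph{local} estimate \eqref{from} with a logarithmic loss, not the global sharp bound in Theorem~\ref{lamrc}. The issue is the length of the tubes. A wave packet at frequency cap $\xi_{j,k} + O(1/r)$ has velocity $-\nabla_\xi h_j(\xi_{j,k}) + O(A/r)$, so as $x_d$ varies the packet spreads transversally by $\sim |x_d|/r$; it only stays inside a tube of width $\sim r$ for $|x_d| \lesssim r^2$. Your pointwise bound by tube indicator functions is therefore only valid on a slab of thickness $r^2$, and the Kakeya application yields the estimate on balls of radius $r^2$, not on all of $\R^d$. To globalize one must iterate (inflate $r \to r^2 \to r^4 \to \cdots$), and each step costs a bounded multiplicative constant; over the $O(\log\log R)$ (or, with the paper's $r_2 \le r_1^{1.1}$ convention, $O(\log R)$) iterations needed to reach scale $R$, this accumulates to $\log^{O(1)} R$. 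This is exactly the argument the paper sketches in the paragraph following \eqref{from} and explicitly says is \emph{not} sufficient.

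The paper's proof takes a different route to eliminate this loss. It sets up a variant $\tilde C(R,r)$ of the best constant, defined through a virtual integral of $\det M$ against a Gabor-type weight $G_{j,r}$ (see \eqref{ineq}), and then runs a heat flow monotonicity argument in $t$ directly on the virtual measure $\vec\mu[t,x]^{\vec p}$. The key gain is Theorem~\ref{non-neg}(ii), which makes the main term $\Expect_t(\tfrac{2}{t^2}\eta_R S_0)$ nonnegative without any multiplicative constant; the remaining error terms pick up factors of $r_1^{-c}$ or $(r_2/R)^c$, so the recursion becomes \eqref{booyah} with a \emph{summable} correction $\delta$ rather than a bounded one. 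That is precisely what your wave-packet-plus-Kakeya argument cannot provide.

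A secondary issue: your bound $\energy_r[{\mathcal E}_j f_{j,k}] \lesssim \|f_{j,k}\|_{L^2}^2\, 1_{T_{j,k}^*}$ conflates a frequency cap with a single tube. A function with Fourier support in a $1/r$-cap is a superposition of parallel tubes indexed by spatial position, not a single one; you need a further spatial decomposition (or to work directly with $\energy_r[{\mathcal E}_j f_{j,k}]$ as a function that is nearly constant along $n_j(\xi_{j,k})$). This is fixable, but fixing it only brings you back to the ball-inflation estimate sketched after \eqref{from}, and hence to the logarithmic loss.
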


\begin{remark}
The exponent $\frac{d}{p}$ on the right-hand side of \eqref{dp} is best possible, as can be seen by testing in the case when all the $f_j$ are non-zero bump functions.  Standard calculations then show that
$$ \energy_{r}[{\mathcal E}_j f_j](t,x) \gtrsim A^{-O(1)}$$
whenever $|x|, |t| \leq A^{-C} r$ for a sufficiently large constant $C$, which shows that \eqref{dp} cannot hold with the exponent $\frac{d}{p}$ replaced by any larger exponent.  We also remark that quantities similar to that appearing in the left-hand side of \eqref{dp} have appeared in the literature on decoupling theorems such as \cite{bd}, \cite{bd-study}, \cite{bdg}; indeed, this entire paper was inspired by the induction on scales arguments appearing in that literature.
\end{remark}

\begin{remark}
The $r=1$ case of Theorem \ref{lamrc} easily implies Theorem \ref{lmrc}.  Indeed, for any fixed $x_d \in \R$ and $j \in [d]$, the Fourier transform $f_j(\xi) e^{2\pi i x_d h_j(\xi)}$ of the function $x' \mapsto {\mathcal E}_j f_j(x',x_d)$ is supported in $B^{d-1}(0,A)$.  Factoring out a function $\varphi(\cdot/A)$, where $\varphi$ is a non-negative real even Schwartz function with Fourier transform supported on $B^{d-1}(0,1)$, and has Fourier transform everywhere non-negative, we obtain a reproducing formula
$$ {\mathcal E}_j f_j(x',x_d) = \int_{\R^d} {\mathcal E}_j \tilde f_j(x'-A^{-1} y',x_d) \check \varphi(y')\ dy'$$
where $\tilde f_j(\xi) \coloneqq f_j(\xi) / \varphi(\xi/A)$.  By Cauchy-Schwarz, this implies the pointwise bound
$$ {\mathcal E}_j f_j(x',x_d) \lesssim A^{O(1)} \energy_1[{\mathcal E}_j \tilde f_j](x',x_d)^{1/2}$$
and we now see that Theorem \ref{lmrc} follows from the $r=1$ case of Theorem \ref{lamrc}.
\end{remark}

\begin{remark}
The $p=\infty$ case of Theorem \ref{lamrc} follows easily from Plancherel's theorem; the difficulty is thus with small values of $p$.
\end{remark}

The induction on scales arguments in \cite{bct}, \cite{bejenaru} can be adapted without too much difficulty to obtain a local version 
\begin{equation}\label{from}
\left\| \prod_{j \in [d]} \energy_{r}[{\mathcal E}_j f_j] \right\|_{L^{p}(B^d(x_0,R))} \lesssim_A \log^{O_A(1)} \langle R \rangle r^{\frac{d}{p}} \prod_{j \in [d]} \| f_j \|_{L^2(U_{j,1/A})}^2
\end{equation}
of Theorem \ref{lmrc} and any $R \geq 1$.  We sketch the argument as follows, glossing over some minor technical details.  In the argument that follows we allow all implied constants to depend on $A$.  In the spirit of the arguments used to prove decoupling theorems (see e.g., \cite{bd}, \cite{bd-study}, \cite{bdg}) we will induct by ``inflating'' the inner scale $r$ rather than increasing the outer scale $R$.  By interpolation with the easy $p=\infty$ bound it suffices to verify the claim at the endpoint $p = \frac{1}{d-1}$.  For $1 \leq r \leq R$, let $C(R,r)$ denote the best constant in the inequality
$$
r^{-d} \| \prod_{j \in [d]} \energy_{r}[{\mathcal E}_j f_j] \|_{L^{\frac{1}{d-1}}(B^d(x_0,R))}^{\frac{1}{d-1}} \leq C(R,r) \prod_{j \in [d]} \| f_j \|_{L^2(U_{j,1/A})}^{\frac{2}{d-1}}
$$
From Plancherel's theorem it is not difficult to establish the bound
\begin{equation}\label{aprr}
 C(R,R) \lesssim 1.
\end{equation}
We will sketch a proof of the recursive inequality
\begin{equation}\label{pr1}
 C(R,r_1) \lesssim C(R,r_2)
\end{equation}
for $1 \leq r_1 \leq r_2 \leq R$ with $r_2 \lesssim r_1^2$; iterating this bound starting from \eqref{aprr} will give the required bound $C(R,r) \lesssim \log^{O(1)} \langle R \rangle$ for any $1 \leq r \leq R$.

To prove \eqref{pr1}, it will suffice to establish the quasi-monotonicity (or ``ball inflation estimate'')
\begin{align*}
&r_1^{-d} \left\| \prod_{j \in [d]} \energy_{r_1}[{\mathcal E}_j f_j] \right\|_{L^{\frac{1}{d-1}}(B^d(x_0,R))}^{\frac{1}{d-1}} \\
&\quad \lesssim
r_2^{-d} \left\| \prod_{j \in [d]} \energy_{r_2}[{\mathcal E}_j f_j]  \right\|_{L^{\frac{1}{d-1}}(B^d(x_0,R))}^{\frac{1}{d-1}},
\end{align*}
ignoring a technical issue in which the sharp cutoff to $B^d(x_0,R)$ in fact has to be replaced with a weight that decays rapidly outside of this ball.  Covering $B^d(x_0,R)$ by balls of radius $r_2$, and using translation invariance, it (morally) suffices to show the localised estimate
\begin{align*}
&r_1^{-d} \| \prod_{j \in [d]} \energy_{r_1}[{\mathcal E}_j f_j] \|_{L^{\frac{1}{d-1}}(B^d(0,r_2))}^{\frac{1}{d-1}} \\
&\quad \lesssim
 r_2^{-d} \| \prod_{j \in [d]} \energy_{r_2}[{\mathcal E}_j f_j]  \|_{L^{\frac{1}{d-1}}(B^d(0,r_2))}^{\frac{1}{d-1}}.
\end{align*}
By the uncertainty principle, we should be able to (morally) restrict to the case when $f_j$ has Fourier transform supported on $B^{d-1}(0,r_2)$.  By Plancherel's theorem,  the right-hand side is then morally comparable to $\prod_{j \in [d]} \| f_j \|_{L^2(U_j)}$, thus we are now morally reduced to establishing the bound
$$
r_1^{-d} \| \prod_{j \in [d]} \energy_{r_1}[{\mathcal E}_j f_j] \|_{L^{\frac{1}{d-1}}(B^d(0,r_2))}^{\frac{1}{d-1}} \lesssim \prod_{j \in [d]} \| f_j \|_{L^2(U_j)}.$$
We now cover each $U_j$ by boundedly overlapping balls $B^{d-1}( \xi_j, r_1/r_2 )$ of radius $r_1/r_2$, and let $f_j = \sum_{\xi_j} f_{j,\xi_j}$ be a corresponding decomposition of $f_j$ resulting from some associated partition of unity in frequency space.  By hypothesis, $r_1/r_2 \lesssim r_1^{-1}$.  The uncertainty principle (or ``$L^2$ decoupling'') then indicates that the functions ${\mathcal E}_j f_{j,\xi_j}$ are ``almost orthogonal'' as $\xi_j$ varies on balls $B(x,r_1)$, which morally suggests a pointwise inequality of the form
$$\energy_{r_1}[{\mathcal E}_j f_j] \lesssim \sum_{\xi_j} \energy_{r_1}[{\mathcal E}_j f_{j,\xi_j}]$$
(once again, this is not quite true due to certain technicalities that we are ignoring).  Furthermore, stationary phase heuristics suggest that the function ${\mathcal E}_j f_{j,\xi_j}$  propagates in the direction $n_j(\xi_j) + O( r_1/r_2)$ (with $n_j(\xi)$ defined in \eqref{njxi}), which then suggests the ``energy estimate''
$$ \energy_{r_1}[{\mathcal E}_j f_{j,\xi_j}](x)  \lesssim \energy_{r_1}[{\mathcal E}_j f_{j,\xi_j}](x - x_d n_j(\xi_j))$$
for $x_d = O( r_2 )$ (again ignoring technicalities).  As a consequence, we expect a ``wave packet decomposition'' of the form
$$ \energy_{r_1}[{\mathcal E}_j f_{j,\xi_j}](x) \lesssim \sum_{T_j \in {\mathbf T}_{j,\xi_j}} c_{j,T_j} 1_{T_j}(x)$$
for $|x_d| \leq r_2$, where $T_j$ ranges over a finite collection ${\mathbf T}_{j,\xi_j}$ of $r_1 \times r_2$ tubes of the form
$$ T_j = \{ (x',x_d): |x' - x_{T_j} - x_d v_{T_j}| \leq r_1; \quad |t| \leq r_2 \}$$ 
for some initial position $x_{T_j} \in \R^{d-1}$ and some velocity $v_{T_j} = - \nabla_\xi h_j(\xi_j) + O( r_1/r_2)$, and the coefficients $c_{j,T_j}$ are non-negative quantities obeying the Bessel inequality
$$ \sum_{T_j} c_{j,T_j} \lesssim \|f_j \|_{L^2(U_j)}^2.$$
Putting all this together, we are thus reduced to establishing the bound
$$
r_1^{-d} \| \prod_{j \in [d]} \sum_{\xi_j} \sum_{T_j \in {\mathbf T}_{j,\xi_j}} c_{j,T_j} 1_{T_j} \|_{L^{\frac{1}{d-1}}(B^d(0,r_2))}^{\frac{1}{d-1}} \lesssim \prod_{j \in [d]} \sum_{\xi_j} \sum_{T_j \in {\mathbf T}_{j,\xi_j}} c_{j,T_j}.$$
But this follows easily from the endpoint multilinear Kakeya inequality \cite{guth} (or Theorem \ref{mlk}) and the transversality hypothesis \eqref{transverse}.

In order to avoid the $R^\eps$ or $\log^{O(1)} \langle R \rangle$ type losses in the above arguments, we shall use the heat flow monotonicity method on fractional Cartesian products as used in \cite{bct}; roughly speaking, this method allows one to sharpen \eqref{pr1} to something more like
$$ C(R,r_1) \leq \left(1 + O\left( A^{O(1)} \left(r_1^{-c} + (r_2/R)^c\right) \right)\right) C(R,r_2)$$
for some $c>0$ depending only on $d$, and this can be iterated without incurring logarithmic losses.  Strictly speaking, and in analogy with the situation in the multilinear Kakeya setting, one has to first replace the quantity $C(R,r)$ by a more technical variant $\tilde C(R,r)$ comparable to $C(R,r)$ (and involving a certain virtual function $\det M$) before being able to establish this approximate monotonicity formula.  While the original extension operator functions ${\mathcal E}_j f_j$ are oscillatory and thus not directly amenable to monotonicity arguments, for the purposes of computing local energies such as $\energy_r[{\mathcal E}_j f_j](x)$, one can replace these functions by a proxy, namely a Gabor-type transform
$$
 G_{j,r}((x',x_d), \xi_j) \coloneqq r^{-0.9(d-1)} \left| \int_{\R^{d-1}} {\mathcal E}_j f_j( y', x_d ) e^{-2\pi i y' \xi^T_j} \varphi_{x',r^{0.9}}(y')\ dy' \right|^2.
$$
of $f$, where 
\begin{equation}\label{varphis}
\varphi_{x',r^{-0.9}}(y') \coloneqq \varphi\left( \frac{y'-x'}{r^{0.9}} \right)
\end{equation}
and $\varphi$ is a suitably normalised Schwartz function; the scale $r^{0.9}$ at which one takes the Gabor-type transform is chosen to be slightly less than the scale $r$ at which one is computing the local energy in order to recover a $r^{-c}$ factor in certain error term estimates.  Plancherel's theorem allows us to approximately express local energies $\energy_r[{\mathcal E}_j f_j](x)$ as Gaussian integrals against a measure $\mu_{j,r}$ on a certain space $\Omega_j$ weighted by this non-negative weight $G_{j,r}$.  By exploiting the dispersion relation of the extension operator ${\mathcal E}_j$, one can show that for fixed $\xi_j$ this weight $G_{j,r}$ is approximately constant along the direction $n_j(\xi)$ defined in \eqref{njxi}.  Combined with the transversality hypothesis \eqref{transverse}, this places one in a situation similar enough to the multilinear Kakeya situation that one can now establish approximate monotonicity in $r$ (though one also needs to rely heavily on Plancherel's theorem to study how the weights $G_{j,r}$ vary in $r$).  


\subsection{Multilinear oscillatory integral estimates}

Just as the multilinear Kakeya estimate in Theorem \ref{mlk} can be extended to a variable coefficient version in Theorem \ref{curv-kak} (at least away from the endpoint $p = \frac{1}{d-1}$), the multilinear restriction estimate in Theorem \ref{lmrc} can be extended to a variable coefficient version involving oscillatory integral operators\footnote{We have swapped the roles of $x$ and $\xi$ (and also $p$ and $q$) compared to the notation in \cite{bct}, in order to be more consistent with the notation elsewhere in this paper.}
\begin{equation}\label{slam}
S_\lambda^{(j)} f(x) \coloneqq \int_{\R^{d-1}} e^{2\pi i \lambda\Phi_j(x,\xi)} \psi_j(x,\xi) f(\xi) d\xi
\end{equation}
for some phase functions $\Phi_j: \R^d \times \R^{d-1} \to \R$, amplitude functions $\psi_j: \R^d \times \R^{d-1} \to \C$, and scaling parameter $\lambda \geq 1$.  More specifically, we show the following result, conjectured in \cite[Remark 6.3]{bct} (though as with our other results, we do not obtain the endpoint case of this conjecture):

\begin{theorem}[Multilinear oscillatory integral estimate]\label{lmrc-osc}
Let $\frac{1}{d-1} < p \leq \infty$ and $1 \leq q < \infty$ be exponents with 
$$
\frac{1}{q} + \frac{1}{2(d-1)p} < 1.$$
Let $A \geq 2$ be a parameter.  Let $M$ be a sufficiently large natural number, depending only on $d$, and let $V$ be an open subset of $B^d(0,A)$.  For $j \in [d]$, let $U_j$ be an open subset of $B^{d-1}(0,A)$, and $\Phi_j \colon V \times U_j \to \R$ and $\psi_j \colon V \times U_j \to \C$ be smooth functions obeying the following axioms:
\begin{itemize}
\item[(i)] (Regularity)  For each $j \in [d]$, $x \in V$, $\xi \in U_j$, one has
\begin{equation}\label{regular-osc}
 |\nabla^{\otimes m}_x \otimes \nabla^{\otimes m'}_\xi \Phi_j(x,\xi)|,
 |\nabla^{\otimes m}_x \otimes \nabla^{\otimes m'}_\xi \psi_j(x,\xi)| \leq A
\end{equation}
for all $0 \leq m,m' \leq M$.  
\item[(ii)]  (Submersion)  For each $j \in [d]$, $x \in V$, $\xi \in U_j$, the mixed Hessian $\nabla_x^T \nabla_\xi \Phi_j(x,\xi) \in \R^{d \times d-1}$ is of full rank, with the $d-1$ non-trivial singular values between $A^{-1}$ and $A$.
\item[(iii)]  (Transversality) For any $x \in V$, we have the lower bound
$$ |\bigwedge_{j \in [d]} n_j| \geq A^{-1}$$
whenever $n_j \in \R^d$ is a unit vector in the left null space of $\nabla_x^T \nabla_\xi \Phi_j(x,\xi)$ for some $\xi \in U_j$.
\end{itemize}
Then one has
\begin{align*}
&\| \prod_{j \in [d]} S_\lambda^{(j)} f_j \|_{L^{2p}(V_{1/A})} \\
&\quad \leq A^{O(1)} \left(1 - \frac{1}{\min(q,2)} - \frac{1}{2(d-1)p}\right)^{-O(1)} \lambda^{-d/2p} \prod_{j \in [d]} \|f_j \|_{L^q(U_{j,1/A})}
\end{align*}
for any $\lambda \geq 1$ and $f_j \in L^2(U_{j,1/A})$, $j \in [d]$, extended by zero outside of $U_{j,1/A}$, where the operators $S_\lambda^{(j)}$ are defined by \eqref{slam}, the sets $U_{j,1/A}$ are defined by \eqref{ujdef}, and $V_{1/A} \subset V$ is similarly defined by \eqref{uua}.
\end{theorem}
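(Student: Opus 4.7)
The strategy is to view Theorem \ref{lmrc-osc} as the variable-coefficient analogue of Theorem \ref{lmrc} (just as Theorem \ref{curv-kak} is the variable-coefficient analogue of Theorem \ref{mlk}) and to prove it by running the Gabor-transform/heat-flow monotonicity argument underlying Theorem \ref{lmrc} directly in the oscillatory-integral setting, coupled with a standard interpolation argument to deduce general $q$ from the central $q = 2$ case.

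First I reduce to $q = 2$. For $q \geq 2$, H\"older's inequality on the bounded-measure set $U_{j,1/A} \subset B^{d-1}(0, A)$ gives $\|f_j\|_{L^2(U_{j,1/A})} \leq A^{O(1)} \|f_j\|_{L^q(U_{j,1/A})}$, so the $q = 2$ case implies the $q \geq 2$ case immediately. For $q \in [1, 2)$, complex (Riesz--Thorin) interpolation between the $q = 2$ estimate (to be proved) and the trivial non-oscillatory bound $\|S_\lambda^{(j)} f_j\|_{L^\infty} \leq A \|f_j\|_{L^1(U_{j,1/A})}$ furnishes the claim. A short computation then shows that the interpolated constant degenerates exactly like $(1 - \frac{1}{q} - \frac{1}{2(d-1) p})^{-O(1)}$ as the interpolation parameter approaches the admissible boundary $\frac{1}{q} + \frac{1}{2(d-1) p} = 1$, matching the claimed blow-up factor.

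For the $q = 2$ case, parallel to the proof of Theorem \ref{lmrc} sketched after Theorem \ref{lamrc}, I replace the oscillatory functions $S_\lambda^{(j)} f_j$ by positive Gabor-type transform proxies
$$G_{j,r}^{(\lambda)}((x',x_d), \xi_j) \coloneqq r^{-0.9(d-1)} \left| \int_{\R^{d-1}} S_\lambda^{(j)} f_j(y', x_d) e^{-2\pi i y' \xi_j^T} \varphi_{x', r^{0.9}}(y')\, dy' \right|^2,$$
with $\varphi$ as in \eqref{varphis} and inner scale $r$ comparable to $\lambda$. Plancherel's theorem lets one rewrite, up to controlled errors, the $L^p$-norm of a product of local energies $\energy_r[S_\lambda^{(j)} f_j]$ on $V_{1/A}$ as an integral on a product space $\vec\Omega^{\vec p}$ against a measure weighted by $\prod_j G_{j,r}^{(\lambda)}$. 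By exploiting the dispersion relation for $S_\lambda^{(j)}$ (encoded in the mixed Hessian $\nabla_x^T \nabla_\xi \Phi_j$), for each fixed $\xi_j$ the weight $G_{j,r}^{(\lambda)}$ is approximately constant along the direction of the left null-vector of $\nabla_x^T \nabla_\xi \Phi_j(x, \xi_j)$, and hypotheses (ii) and (iii) ensure that these $d$ directions are uniformly transverse up to controlled errors, placing us in a situation close enough to that of Theorem \ref{curv-kak} to enable the heat-flow monotonicity machinery. Running the fractional-Cartesian-product argument of \cite{bct} in the virtual-integration formalism of Section \ref{fractional}, one differentiates the analogue of $r^{-d} \int_{V_{1/A}} \int_{\vec\Omega^{\vec p}} \det(M[x]) \, d\vec\mu_{w[r, x]}^{\vec p} \, dx$ with respect to $r$, where $M[x]$ is a virtual matrix built from the mixed Hessians and the parameters $\xi_j$; the derivative is essentially non-negative, since the transversality hypothesis keeps $M[x]$ close to a constant multiple of the identity and $\det(M[x])$ bounded away from zero, up to lower-order terms arising from the $x$-dependence of $\Phi_j$ and $\psi_j$ through derivatives of order up to $M$.

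The main obstacle is the handling of those lower-order variable-coefficient error terms. Their sizes are $O_A(r^{-c})$ for some $c = c(d) > 0$ --- a smallness that relies crucially on the choice of Gabor scale $r^{0.9}$ slightly shorter than $r$, as highlighted in the discussion after Theorem \ref{lamrc}. They are absorbed by a mild induction on the scale $r$ (equivalently, on $\lambda$) in which the target exponent $d/(2p)$ is taken slightly smaller in the inductive hypothesis, exactly as in the proof of Theorem \ref{curv-kak}. The care required to close this induction uniformly in $p$ as $p$ approaches the endpoint $\frac{1}{d-1}$, so as to obtain the polynomial blow-up $(d-1-1/p)^{-O(1)}$ rather than a worse quantitative dependence, is the key technical difficulty; it is resolved by tracking how constants propagate through the $O(\log \lambda)$ inductive iterations with a per-step loss that is polynomial in $(d-1-1/p)^{-1}$.
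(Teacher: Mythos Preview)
Your reduction to $q=2$ matches the paper's opening moves. From that point, however, your approach diverges substantially from the paper's, and as written it has a genuine gap.

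The paper does \emph{not} run a fresh heat-flow monotonicity argument on Gabor transforms of $S_\lambda^{(j)} f_j$. Instead, it deduces the oscillatory estimate from the two earlier theorems used as black boxes, via a two-scale wave-packet decomposition at scale $\sqrt{\lambda}$. Locally (Proposition~\ref{local}), on each ball $B^d(x, A^{-C_1}\sqrt{\lambda})$ the phase is Taylor-expanded to second order so that $S_\lambda^{(j)} f_j$ becomes, up to controllable errors, a genuine extension operator ${\mathcal E}_j$ with a smooth amplitude; Theorem~\ref{lmrc} is then applied. The delicate point is to bound $\|f'_j\|_{L^2}$ by a \emph{localised} $L^2$ norm of $F_j$ on a disk of radius $\sqrt{\lambda}$; this is achieved by a pigeonholing argument on smoothed local energies $E_j(r)$ to select a good scale $r_j$ obeying \eqref{raj}, which is what eliminates the $\eps$-loss that the corresponding argument in \cite{bct} incurred. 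Globally, the resulting product of local $L^2$ energies $e_j(x)$ is controlled by a Gabor decomposition of the $f_j$ and an application of Theorem~\ref{curv-kak} to the maps $\phi_j[x](\zeta,z) = \nabla_\xi \Phi_j(x,\zeta) - \lambda^{-1} z$.

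Your direct heat-flow proposal skips this $\sqrt{\lambda}$-scale decomposition, and that is where the gap lies. The Gabor proxy $G_{j,r}^{(\lambda)}$ you define is the squared short-time Fourier transform of $y' \mapsto S_\lambda^{(j)} f_j(y',x_d)$, but unlike ${\mathcal E}_j f_j(\cdot,x_d)$, this function does \emph{not} have compactly supported spatial Fourier transform, nor does it obey a clean first-order dispersion relation in $x_d$: the phase $\lambda\Phi_j(x/\lambda,\xi)$ is only linear in $x$ up to errors of size $O(|x|^2/\lambda)$, which become significant precisely beyond scale $\sqrt{\lambda}$. Consequently the claim that $G_{j,r}^{(\lambda)}$ is ``approximately constant along the direction of the left null-vector of $\nabla_x^T\nabla_\xi\Phi_j$'' fails once $r$ exceeds $\sqrt{\lambda}$, and the monotonicity argument cannot be iterated from $r \sim 1$ up to $r \sim \lambda$ as you suggest. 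The paper's two-scale structure is exactly the device that confines the restriction-type argument to the regime $r \lesssim \sqrt{\lambda}$ where the linearisation is valid, and hands the remaining scales to the curved Kakeya theorem.
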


\begin{remark}
As noted in \cite{bct}, Theorem \ref{lmrc} can be deduced by a rescaling argument from the special case of Theorem \ref{lmrc-osc} when the phases $\Phi_j(x,\xi)$ take the form $\Phi_j((x',x_d),\xi) = x' \xi^T + x_d h_j(\xi)$; we leave the details to the interested reader.  The most interesting cases of this theorem occur when $q=2$, as the results for other values of $q$ can be recovered by interpolation and H\"older's inequality.  In \cite[Theorem 6.2]{bct}, a version of this result was established in which the $A^{O(1)} (1 - \frac{1}{\min(q,2)} - \frac{1}{2(d-1)p})^{-O(1)}$ factor was replaced by $O_{A,\eps}(\lambda^\eps)$ for any $\eps>0$ (and with the regularity $M$ required also dependent on $\eps$); as with previous results, we can now sharpen this to $A^{O(1)} \log^{O(1)} \langle \lambda \rangle$, even at the endpoint case $(p,q) = (\frac{1}{d-1},2)$.  The exponent $\lambda^{-d/2p}$ is optimal, as can be seen by setting $f_j \coloneqq \overline{\psi_j}(0,\xi)$ and evaluating $\prod_{j \in [d]} S_\lambda^{(j)} f_j(x)$ for $|x| \leq C A^{-C}/\lambda$ for a sufficiently large $C$.
\end{remark}

Following the wave packet decomposition arguments in \cite[Proposition 6.9]{bct}, one would expect Theorem \ref{lmrc-osc} to follow quickly from Theorem \ref{curv-kak} and Theorem \ref{lmrc}.  Unfortunately, the arguments in \cite[Proposition 6.9]{bct} contain an additional epsilon loss in exponents, and so we need to obtain a more efficient version of this argument that avoids epsilon losses.  This is done in Section \ref{osc-sec}.

The author was supported by a Simons Investigator grant, the James and Carol Collins Chair, the Mathematical Analysis \&
Application Research Fund Endowment, and by NSF grant DMS-1764034. We thank Laura Cladek for helpful discussions and encouragement, and Jon Bennett and Tony Carbery for useful references and discussion.  The author also thanks the anonymous referee for many useful comments and corrections.

\section{Notation}\label{notation-sec}

If $p \in \R$ is a real number and $k \in \N$ is a natural number, we define the binomial coefficient
$$\binom{p}{k} \coloneqq \frac{p(p-1) \dots (p-k+1)}{k!}.$$
If $(\Omega,\mu)$ is a measure space\footnote{All measure spaces will be assumed to be $\sigma$-finite, in order to define product measures without difficulty.},  we let $L^p(\Omega \to \R, \mu)$ be the usual Lebesgue spaces of real-valued functions with respect to such measure spaces; in this paper we will not need to identify functions that agree almost everywhere.  We also will work with complex functions $f \in L^p(\Omega \to \C, \mu)$, vector valued functions $u \in L^p(\Omega \to \R^d, \mu)$ or matrix-valued functions $M \in L^p(\Omega \to \R^{d_1 \times d_2}, d\mu)$.  If $0 < \mu(\Omega) < \infty$ and $f \in L^\infty(\Omega \to \R)$, we can define the expectation
$$ \E_{\mu} f \coloneqq \frac{1}{\mu(\Omega)} \int_\Omega f\ d\mu$$
and the variance
$$ \Var_{\mu} f \coloneqq \E_{\mu} |f - \E_{\mu} f|^2 = \E_{\mu}(f^2) - (\E_{\mu} f)^2;$$
similarly, for $f,g \in L^\infty(\Omega \to \R)$, we can define the covariance
$$ \Cov_{\mu}(f,g) \coloneqq \E_{\mu} (f - \E_{\mu} f)(g - \E_{\mu} g) = \E_{\mu}(fg) - (\E_{\mu} f)(\E_{\mu} g).$$
If $w \in L^1(\Omega \to \R)$ is strictly positive, then $\mu$ and $\mu_w$ are mutually absolutely continuous, and the spaces $L^\infty(\Omega \to \R, \mu)$ and $L^\infty(\Omega \to \R, \mu_w)$ are identical.  We shall often abbreviate such spaces as $L^\infty(\Omega \to \R)$; similarly for vector-valued or matrix-valued analogues of these spaces.

We record the standard quasi-triangle inequality
\begin{equation}\label{quasi}
 \| \sum_k f_k \|_{L^p} \leq \left(\sum_k \|f_k \|_{L^p}^{\min(p,1)}\right)^{1/\min(p,1)},
\end{equation}
valid whenever $0 < p \leq \infty$ and the right-hand side is finite.

Given two finite measures $\mu, \nu$ on a common space $\Omega$, define the \emph{total variation distance} $\| \mu - \nu \|_{\TV}$ to be the quantity
$$ \| \mu - \nu \|_{\TV} \coloneqq \sup_{F\colon \Omega \to [-1,1]} \left|\int_\Omega F\ d\mu - \int_\Omega F\ d\nu\right|$$
where $F$ ranges over measurable functions taking values in the interval $[-1,1]$. Thus for instance
$$ \| \mu \|_{\TV} = \mu(\Omega)$$
and we of course have the triangle inequality
$$ \left|\| \mu \|_{\TV} - \| \nu \|_{\TV}\right| \leq \| \mu - \nu \|_{\TV}.$$

If $E$ is a set, we use $1_E$ to denote its indicator function, thus $1_E(x)=1$ when $x \in E$ and $1_E(x)=0$ when $x \not \in E$. Similarly, if $S$ is a statement, we define the indicator $1_S$ to equal $1$ when $S$ is true and $0$ when $S$ is false, thus for instance $1_E(x) = 1_{x \in E}$.

We endow finite-dimensional spaces such as $\R^d$ or $\R^{d_1 \times d_2}$ with the Euclidean metric, thus for instance if $M \in \R^{d_1 \times d_2}$ then $|M|$ is its Frobenius norm. In any Euclidean space $\R^D$, we use $B^D(x,r) \coloneqq \{ y \in \R^d: |x-y| \leq r\}$ to denote the ball of radius $r>0$ centred at some point $x \in \R^D$ (it will not be of importance to us whether the balls are open or closed).

In this paper we will utilise the following cutoff functions:
\begin{itemize}
\item The gaussian function $\gamma: \R^{d-1} \to \R$ defined by
$$ \gamma(x) \coloneqq \exp( - x x^T ) = \exp(-|x|^2).$$
\item The one-dimensional gaussian function $\gamma^{(1)}: \R \to \R$ defined by
$$ \gamma^{(1)}(x) \coloneqq \exp( - \pi x^2 ).$$
\item The polynomially decaying weight $\rho: \R^{d-1} \to \R$ defined by
$$ \rho(x) \coloneqq \langle x \rangle^{-10d^2}.$$
\item A non-negative bump function $\eta: \R^{d} \to \R$ supported on $B^{d}(0,2)$ that equals 1 on $B^{d}(0,1)$.
\item A non-negative bump function $\eta': \R^{d-1} \to \R$ supported on $B^{d-1}(0,2)$ that equals 1 on $B^{d-1}(0,1)$.
\item A real even Schwartz strictly positive function $\varphi: \R^{d-1} \to \R$ whose Fourier transform $\hat \varphi$ is supported in $B^{d-1}(0,1)$, and which obeys the normalisation
\begin{equation}\label{norm} 
\int_{\R^{d-1}} |\varphi(x')|^2\ dx' = \int_{\R^{d-1}} |\hat \varphi(\xi)|^2\ d\xi = 1.
\end{equation}
\end{itemize}

The function $\varphi$ can be constructed as follows: first start with a non-trivial real even bump function supported on $B^{d-1}(0,1/2)$, take its inverse Fourier transform, and square it to obtain a real even non-negative function whose Fourier transform is supported on $B^{d-1}(0,1)$ and is real analytic.  Convolving this function with itself will then remove all the zeroes, then dividing by its $L^2$ norm will ensure the normalisation \eqref{norm}.

If $f$ denotes any of the above functions, we define the rescaled versions $f_r(x) \coloneqq f(x/r)$ and $f_{x_0,r}(x) \coloneqq f\left(\frac{x-x_0}{r}\right)$ for $r>0$ and $x_0$ in $\R$, $\R^{d-1}$, or $\R^d$ as appropriate, thus for instance we recover the conventions \eqref{gammat}, \eqref{rhoar}, \eqref{varphis} from the introduction.

\section{The calculus of virtual integration on fractional product spaces}\label{fractional}

If $\vec n = (n_1,\dots,n_d) \in \N^d$ is a $d$-tuple of natural numbers, we define the multi-index discrete interval
\begin{align*}
[\vec n] &\coloneqq \Disjoint([n_1], \dots,[n_d]) \\
&= \{ (j,k): j \in [d], k \in [n_j] \}
\end{align*}
thus for instance
$$ [(2,3)] = \{ (1,1), (1,2), (2,1), (2,2), (2,3) \}.$$

Let $\vec p = (p_1,\dots,p_d) \in \N^d$ be a $d$-tuple of natural numbers, and $(\vec \Omega, \vec \mu)$ a $d$-tuple of measure spaces $(\Omega_j,\mu_j)$.  We then define the product space $(\vec \Omega^{\vec p}, \vec \mu^{\vec p})$ to be the measure space given by
$$ \vec \Omega^{\vec p} \coloneqq \prod_{(j,k) \in [\vec p]} \Omega_j \equiv \prod_{j \in [d]} \Omega_j^{p_j}$$
and
$$ \vec \mu^{\vec p} \coloneqq \prod_{(j,k) \in [\vec p]} \mu_j \equiv \prod_{j \in [d]} \mu_j^{p_j}.$$
Given $f \in L^\infty(\Disjoint \vec \Omega \to \R)$, we can define the summed function $\Sigma_{\vec p}(f) \in L^\infty(\vec \Omega^{\vec p} \to \R)$ by the formula
\begin{equation}\label{Sigma-def}
 \Sigma_{\vec p}(f)(\omega) \coloneqq \sum_{(j,k) \in [\vec p]} f(j, \omega_{j,k}),\
\end{equation}
for any $\omega = (\omega_{j,k})_{j \in [d], k \in [p_j]}$ in $\vec \Omega^{\vec p}$.
For instance, if $\vec p = (2,3)$, then
$$ \Sigma_{(2,3)}(f)(\omega) = f(1,\omega_{1,1}) + f(1,\omega_{1,2}) + f(2,\omega_{2,1}) + f(2,\omega_{2,2}) + f(2,\omega_{2,3})$$
The summation operator $\Sigma_{\vec p}\colon L^\infty(\Disjoint \vec \Omega \to \R) \to L^\infty(\vec \Omega^{\vec p} \to \R)$ is linear, and thus can be easily extended to vector-valued or matrix-valued functions by working component-by-component.  

From Fubini's theorem we can easily compute various integrals involving these expressions:

\begin{lemma}[Basic integration formulae]\label{basic}  Let $(\vec \Omega,\vec \mu)$ be a $d$-tuple of measure spaces obeying the finiteness and nondegeneracy condition
\begin{equation}\label{finite}
0 < \mu_j(\Omega_j) < \infty
\end{equation}
for all $j \in [d]$.  Let $\vec p = (p_1,\dots,p_d)$ be a $d$-tuple of natural numbers. Then we have
\begin{equation}\label{id}
\int_{\vec \Omega^{\vec p}}\ d\vec \mu^{\vec p} = \vec \mu(\vec \Omega)^{\vec p} 
\end{equation}
where we recall from the introduction that
$$ \vec \mu(\vec \Omega)^{\vec p} \coloneqq \prod_{j \in [d]} \mu_j(\Omega_j)^{p_j} = \prod_{j \in [d]} \|\mu_j\|_{\TV}^{p_j} .$$
More generally, for any $f,g \in L^\infty(\Disjoint \vec \Omega \to \R)$, one has
\begin{equation}\label{id-2}
 \int_{\vec \Omega^{\vec p}}\Sigma_{\vec p}(f)\ d\vec \mu^{\vec p} = (\sum_{j \in [d]} p_j \E_{\mu_j} f_j) \vec \mu(\vec \Omega)^{\vec p} 
\end{equation}
and
\begin{equation}\label{paj}
\begin{split}
 \int_{\vec \Omega^{\vec p}} \Sigma_{\vec p}(f) \Sigma_{\vec p}(g)\ d\vec \mu^{\vec p} &= 
\left[ (\sum_{j \in [d]} p_j \E_{\mu_j} f_j) (\sum_{j \in [d]} p_j \E_{\mu_j} g_j) + \sum_{j \in [d]} p_j \Cov_{\mu_j}(f_j,g_j) \right] \\
&\quad \times \vec \mu(\vec \Omega)^{\vec p} .
\end{split}
\end{equation}
where $f_j\colon \omega_j \mapsto f(j,\omega_j)$ is the $j^{\operatorname{th}}$ component of $f$, and similarly for $g_j$.
\end{lemma}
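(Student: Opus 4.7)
All three identities are Fubini-type computations for the product measure $\vec \mu^{\vec p} = \prod_{(j,k) \in [\vec p]} \mu_j$; the finiteness and nondegeneracy hypothesis \eqref{finite} is what lets us apply Fubini freely and lets us make sense of the normalised expectations $\E_{\mu_j}$ appearing on the right. The plan is to dispatch them in order, with the bookkeeping becoming slightly richer each time.

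For \eqref{id}, the claim is immediate: by the definition of product measure,
$$\int_{\vec \Omega^{\vec p}} d\vec \mu^{\vec p} = \prod_{(j,k) \in [\vec p]} \mu_j(\Omega_j) = \prod_{j \in [d]} \mu_j(\Omega_j)^{p_j} = \vec \mu(\vec \Omega)^{\vec p}.$$

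For \eqref{id-2}, I would use linearity of $\Sigma_{\vec p}$ to reduce to a single term of the form $(j_0,k_0) \mapsto f(j_0, \omega_{j_0,k_0})$, which depends on only the coordinate indexed by $(j_0,k_0) \in [\vec p]$. Integrating out that single coordinate gives $\int_{\Omega_{j_0}} f_{j_0}\, d\mu_{j_0} = \mu_{j_0}(\Omega_{j_0}) \E_{\mu_{j_0}} f_{j_0}$, while the remaining $\prod_{(j,k) \neq (j_0,k_0)} \mu_j(\Omega_j)$ combines with $\mu_{j_0}(\Omega_{j_0})$ to reconstitute $\vec \mu(\vec \Omega)^{\vec p}$. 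Summing over $(j_0,k_0) \in [\vec p]$ produces the factor $\sum_{j_0 \in [d]} p_{j_0}\, \E_{\mu_{j_0}} f_{j_0}$, since each $j_0 \in [d]$ is counted $p_{j_0}$ times.

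For \eqref{paj}, I would expand the product into the double sum
$$\Sigma_{\vec p}(f)\,\Sigma_{\vec p}(g)(\omega) = \sum_{(j,k),(j',k') \in [\vec p]} f(j,\omega_{j,k})\, g(j',\omega_{j',k'})$$
and split according to whether $(j,k) = (j',k')$ or not. Off-diagonal terms involve two functions depending on distinct product coordinates, so Fubini factors the integral as $\E_{\mu_j} f_j \cdot \E_{\mu_{j'}} g_{j'} \cdot \vec \mu(\vec \Omega)^{\vec p}$; diagonal terms contribute $\E_{\mu_j}(f_j g_j) \cdot \vec \mu(\vec \Omega)^{\vec p}$. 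The key algebraic step, and the one most likely to cause a bookkeeping slip, is to add and subtract the diagonal product contribution $\sum_{(j,k)} \E_{\mu_j} f_j \cdot \E_{\mu_j} g_j$: this extends the off-diagonal factored sum into the full unrestricted double sum, which collapses to $(\sum_j p_j \E_{\mu_j} f_j)(\sum_j p_j \E_{\mu_j} g_j)$, while the remaining diagonal correction telescopes into $\sum_j p_j \Cov_{\mu_j}(f_j,g_j)$ using the identity $\E_{\mu_j}(f_j g_j) - \E_{\mu_j} f_j \cdot \E_{\mu_j} g_j = \Cov_{\mu_j}(f_j,g_j)$ and the fact that the diagonal is counted $p_j$ times for each $j \in [d]$. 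No step is a serious obstacle; the only risk is miscounting multiplicities, which is controlled by the observation that $\#\{(j,k) \in [\vec p] : j = j_0\} = p_{j_0}$.
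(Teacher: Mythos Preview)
Your proposal is correct and takes essentially the same approach as the paper: all three identities are handled by expanding the $\Sigma_{\vec p}$ sums and applying Fubini--Tonelli termwise. The only cosmetic difference is in \eqref{paj}, where the paper first reduces by linearity to the case where each $f_j$ and $g_j$ has mean zero (so the off-diagonal terms vanish outright), whereas you compute all terms directly and then perform the add-and-subtract to assemble the covariances; these are equivalent rearrangements of the same computation.
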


\begin{proof} The identity \eqref{id} is immediate from the Fubini-Tonelli theorem.  By \eqref{Sigma-def}, the left-hand side of \eqref{id-2} is equal to
$$\sum_{(j,k) \in [\vec p]} \int_{\vec \Omega^{\vec p}} f_j(\omega_{j,k})\ d\vec \mu^{\vec p},$$
and the claim again follows from the Fubini-Tonelli theorem.

From \eqref{id-2} we already obtain \eqref{paj} in the case that all of the $g_j$ are constant, so by linearity we may assume without loss of generality that the $g_j$ have mean zero: $\E_{\mu_j} g_j = 0$.  Similarly we may assume that $\E_{\mu_j} f_j = 0$.  By \eqref{Sigma-def}, the left-hand side of \eqref{paj} is then equal to
$$\sum_{(j,k), (j',k') \in [\vec p]} \int_{\vec \Omega^{\vec p}} f_j(\omega_{j,k}) g_{j'}(\omega_{j',k'})\ d\vec \mu^{\vec p}.$$
The mean zero hypotheses ensure that the integrals here vanish unless $(j,k) = (j',k')$, and the claim again follows from the Fubini-Tonelli theorem.
\end{proof}

In fact we have a more general formula. If $\vec p = (p_1,\dots,p_d) \in \R^d$ is a $d$-tuple of real numbers and $\vec k = (k_1,\dots,k_d) \in \N^d$ is a $d$-tuple of natural numbers, we define the multi-index binomial coefficient
$$ \binom{\vec p}{\vec k} \coloneqq \prod_{j \in [d]} \binom{p_j}{k_j}.$$

\begin{lemma}[Product integration formula]  Let $(\vec \Omega,\vec \mu)$ be a $d$-tuple of measure spaces obeying \eqref{finite}, and let $\vec p \in \N^d$ be a $d$-tuple of natural numbers.  Then for any $f_1,\dots,f_n \in L^\infty(\Disjoint \vec \Omega \to \R)$, the integral
\begin{equation}\label{into}
\int_{\vec \Omega^{\vec p}} \prod_{i \in [n]} \Sigma_{\vec p}(f_i)\ d\vec \mu^{\vec p}
\end{equation}
can be written as
\begin{equation}\label{formula}
\begin{split}
& \sum_{\vec a \in \N^d} \binom{\vec p}{\vec a} \sum_{(j,k)\colon [n] \twoheadrightarrow [\vec a]} \int_{\vec \Omega^{\vec a}} \left(\prod_{i \in [n]} f_i(j_i, \omega_{j_i,k_i})\right) \ d\vec \mu^{\vec a} \vec \mu(\vec \Omega)^{\vec p - \vec a} 
\end{split}
\end{equation}
where, for each $\vec a \in \N^d$, $(j,k)$ ranges over surjective maps $i \mapsto (j_i,k_i)$ from $[n]$ to $[\vec a]$.  (In particular, there are only finitely many values of $\vec a, (j,k)$ that give a non-zero contribution to this formula.)
\end{lemma}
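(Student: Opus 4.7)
The plan is to expand the product $\prod_{i \in [n]} \Sigma_{\vec p}(f_i)$, apply Fubini, and then re-organise the resulting sum by the combinatorial type of the underlying map.

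First I would use the definition \eqref{Sigma-def} to write
\begin{equation*}
 \prod_{i \in [n]} \Sigma_{\vec p}(f_i)(\omega) = \sum_{(j,k) \colon [n] \to [\vec p]} \prod_{i \in [n]} f_i(j_i, \omega_{j_i,k_i}),
\end{equation*}
where the sum ranges over all (not necessarily surjective) maps $i \mapsto (j_i, k_i)$ from $[n]$ to $[\vec p]$. Since the $f_i$ are bounded and the measures are finite by \eqref{finite}, Fubini--Tonelli applies and lets us integrate termwise against $d\vec\mu^{\vec p}$.

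For a fixed map $(j,k)\colon [n] \to [\vec p]$, let $S \subset [\vec p]$ be its image, and let $\vec a = \vec a(S) \in \N^d$ be the ``shape'' of $S$, that is, $a_{j'} \coloneqq \#\{k' \in [p_{j'}] : (j',k') \in S\}$. The integrand $\prod_i f_i(j_i, \omega_{j_i,k_i})$ depends only on the coordinates $\omega_{j',k'}$ with $(j',k') \in S$, so Fubini separates the integral into a part over these ``active'' coordinates and a part over the remaining coordinates which just contributes $\prod_{j' \in [d]} \mu_{j'}(\Omega_{j'})^{p_{j'} - a_{j'}} = \vec\mu(\vec\Omega)^{\vec p - \vec a}$. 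Choosing any bijection between $S$ and $[\vec a]$ (which merely relabels coordinates and does not change the value of the integral), the ``active'' integral becomes
\begin{equation*}
 \int_{\vec\Omega^{\vec a}} \prod_{i \in [n]} f_i(j_i, \omega_{j_i, k_i})\ d\vec\mu^{\vec a},
\end{equation*}
in which $(j,k)$ is now viewed as a \emph{surjective} map $[n] \twoheadrightarrow [\vec a]$.

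Finally I would group the maps by their image $S$. For each shape $\vec a \in \N^d$ with $\vec a \leq \vec p$ coordinatewise, the number of subsets $S \subset [\vec p]$ of shape $\vec a$ is exactly $\binom{\vec p}{\vec a} = \prod_{j \in [d]} \binom{p_j}{a_j}$, and the maps $(j,k)\colon [n] \to [\vec p]$ with a \emph{given} such image $S$ correspond, via the chosen bijection $S \leftrightarrow [\vec a]$, to surjective maps $[n] \twoheadrightarrow [\vec a]$. Assembling these counts yields \eqref{formula}; when $\vec a \not\leq \vec p$ the binomial coefficient $\binom{\vec p}{\vec a}$ vanishes, and since surjections $[n] \twoheadrightarrow [\vec a]$ only exist when $a_1 + \cdots + a_d \leq n$, only finitely many terms contribute. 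The main (very mild) obstacle is really just bookkeeping: one must check that the ``relabelling'' step is independent of the choice of bijection $S \leftrightarrow [\vec a]$, which follows from Fubini together with the fact that reordering the factors of a product measure does not alter integrals.
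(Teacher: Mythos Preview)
Your proposal is correct and follows essentially the same route as the paper: expand $\prod_i \Sigma_{\vec p}(f_i)$ as a sum over all maps $[n]\to[\vec p]$, integrate termwise by Fubini, and regroup according to the image of the map. The only cosmetic difference is that the paper fixes the bijection $S\leftrightarrow[\vec a]$ canonically via monotone increasing maps $\iota_j\colon[a_j]\to[p_j]$ (whose count is $\binom{\vec p}{\vec a}$), whereas you allow an arbitrary bijection and then note the resulting integral is independent of that choice; either way one must, as you implicitly do, take the bijection to preserve the first coordinate $j$.
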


\begin{proof}   By \eqref{Sigma-def}, one can expand \eqref{into} as
$$ \sum_{(j,k)\colon [n] \to [\vec p]} 
\int_{\vec \Omega^{\vec p}} \prod_{i \in [n]} f_i(j_i, \omega_{j_i,k_i})\ d\vec \mu^{\vec p}$$
where $(j,k)$ ranges over all maps $i \mapsto (j_i,k_i)$ from $[n]$ to $[\vec p]$.  For each such map, there exists a unique tuple $\vec a \in \N^d$, a surjective map $i \mapsto (\tilde j_i, \tilde k_i)$ from $[n]$ to $[\vec a]$, and monotone increasing maps $\iota_j\colon [a_j] \to [p_j]$ for $j \in [n]$, such that
$$ (j_i,k_i) = (\tilde j_i, \iota_{j_i}( \tilde k_i) )$$
for all $i \in [n]$.  Conversely, each choice of $\vec a$, $i \mapsto (\tilde j_i, \tilde k_i)$, $\iota_j\colon [a_j] \to [p_j]$  generates a map $i \mapsto (j_i,k_i)$, whose contribution to \eqref{into} is equal to
$$ \sum_{(j,k)\colon [n] \twoheadrightarrow [\vec a]} \int_{\vec \Omega^{\vec a}} \left(\prod_{i \in [n]} f_i(j_i, \omega_{j_i,k_i})\right) \ d\vec \mu^{\vec a} \vec \mu(\vec \Omega)^{\vec p - \vec a} $$
thanks to the Fubini-Tonelli theorem.  For each choice of $\vec i$ and $i \mapsto (\tilde j_i, \tilde k_i)$, there are $\binom{\vec p}{\vec a}$ possible choices for the maps $\iota_j$, and the claim follows.
\end{proof}

For instance, applying this lemma with $n=2$ and $f,g \in L^\infty(\Disjoint \vec \Omega) \to \R)$, we see that
\begin{equation}\label{fgs}
 \int_{\vec \Omega^{\vec p}} \Sigma_{\vec p}(f) \Sigma_{\vec p}(g) \ d\vec \mu^{\vec p}
\end{equation}
is equal to the sum of 
\begin{align*}
& \sum_{1 \leq j_1 < j_2 \leq d} p_{j_1} p_{j_2} \int_{\Omega_{j_1} \times \Omega_{j_2}} (f(j_1,\omega_{j_1}) g(j_2,\omega_{j_2}) + f(j_2,\omega_{j_2}) g(j_1,\omega_{j_1}))\\
&\quad\quad\quad d\mu_{j_1}(\omega_{j_1}) d\mu_{j_2}(\omega_{j_2}) \\
&\quad \times \prod_{j \in [d]} \mu_j(\Omega_j)^{p_j - 1_{j=j_1} - 1_{j=j_2}}
\end{align*}
and
\begin{align*}
& \sum_{j_1 \in [d]} \binom{p_{j_1}}{2} \int_{\Omega_{j_1}^2} (f(j_1,\omega_{j_1}) g(j_1,\omega'_{j_1}) + f(j_1,\omega'_{j_1}) g(j_1,\omega_{j_1}))\ d\mu_{j_1}(\omega_{j_1}) d\mu_{j_1}(\omega'_{j_1}) \\
&\quad \times \prod_{j \in [d]} \mu_j(\Omega_j)^{p_j - 2 \times 1_{j=j_1}}
\end{align*}
and
\begin{align*}
\sum_{j_1 \in [d]} p_{j_1} \int_{\Omega_{j_1}} f(j_1,\omega_{j_1}) g(j_1,\omega_{j_1}) \ d\mu_{j_1}(\omega_{j_1}) \prod_{j \in [d]} \mu_j(\Omega_j)^{p_j - 1_{j=j_1}}
\end{align*}
and after some calculation one can check that this agrees with \eqref{paj}.

We now make the key observation that the expression \eqref{formula} continues to be well defined when $\vec p \in \R^d$ is a $d$-tuple of real numbers, rather than natural numbers.  (Here the lower bound in \eqref{finite} becomes important, because the exponents $p_j-a_j$ can be negative without the binomial coefficient $\binom{p_j}{a_j}$ vanishing.)  As such, this allows us to \emph{formally} make sense of integrals such as \eqref{into} when the exponents $p_j$ are no longer assumed to be natural numbers, even if one can no longer interpret $(\vec \Omega^{\vec p}, \vec\mu^{\vec p})$ as a classical measure space in this case.

More precisely, if $\vec p \in \R^d$ is a real $d$-tuple, we define ${\mathcal L}^\infty( \vec \Omega^{\vec p} \to \R )$ to be the real commutative unital algebra generated by the formal symbols $\Sigma_{\vec p}( f )$, $f \in L^\infty(\Disjoint \vec \Omega \to \R)$, thus elements of ${\mathcal L}^\infty( \vec \Omega^{\vec p} \to \R )$ (which we shall call \emph{virtual functions}) consists of formal real linear combinations of formal commutative products
$$ \Sigma_{\vec p}( f_1 ) \dots \Sigma_{\vec p}(f_n)$$
with $n \in \N$ and $f_1,\dots,f_n \in L^\infty(\Disjoint \vec \Omega \to \R)$.  We interpret $\Sigma_{\vec p}$ as linear operators, for instance we identify $\Sigma_{\vec p}(f+g)$ with $\Sigma_{\vec p}(f) + \Sigma_{\vec p}(g)$.  With these identifications one can view ${\mathcal L}^\infty( \vec \Omega^{\vec p} \to \R )$ as a Fock-type space
$$ {\mathcal L}^\infty( \vec \Omega^{\vec p} \to \R ) \equiv \bigoplus_{n=0}^\infty \mathrm{Sym}^n\left( L^\infty\left(\Disjoint \vec \Omega \to \R\right) \right)$$
where $\mathrm{Sym}^n$ denotes the $n^{\operatorname{th}}$ symmetric tensor power.

In this setting, the space $\vec \Omega^{\vec p}$ no longer exists as a classical set, and $\Sigma_{\vec p}( f )$ can no longer be interpreted as a classical function.  Nevertheless, given an element $F$ of this algebra ${\mathcal L}^\infty( \vec \Omega^{\vec p} \to \R )$, one can define the ``virtual integral''
$$ \int_{\vec \Omega^{\vec p}} F\ d\vec \mu^{\vec p}$$
by decomposing this expression as a finite linear combination of virtual integrals
$$ \int_{\vec \Omega^{\vec p}} \prod_{i \in [n]} \Sigma_{\vec p}( f_i )\ d\vec \mu^{\vec p}$$
and then evaluating each such virtual integral using the formula \eqref{formula} (where we extend \eqref{multi} to the case of non-integer $p_j$ in the obvious fashion).  This is well-defined because the expression \eqref{formula} is easily seen to be invariant with respect to permutations of the $f_1,\dots,f_n$, and is also multilinear in these inputs.  One can also extend this virtual integration concept to vector-valued virtual functions
$$ \phi \in {\mathcal L}^\infty( \vec \Omega^{\vec p} \to \R^D ) \equiv {\mathcal L}^\infty( \vec \Omega^{\vec p} \to \R ) \otimes \R^D$$
or matrix-valued virtual functions
$$ M \in {\mathcal L}^\infty( \vec \Omega^{\vec p} \to \R^{D_1 \times D_2} ) \equiv {\mathcal L}^\infty( \vec \Omega^{\vec p} \to \R ) \otimes \R^{D_1 \times D_2}$$
in the usual fashion (integrating each component separately).

With these conventions, we see that Lemma \ref{basic} now extends to the case where the $d$-tuple $\vec p$ consists of real numbers rather than natural numbers.  

From the H\"older and triangle inequalities one easily obtains the following upper bounds:

\begin{lemma}[H\"older bound]\label{holder} Let $(\vec \Omega,\vec \mu)$ be a $d$-tuple of measure spaces obeying \eqref{finite}, and let $\vec p \in \R^d$ be a $d$-tuple of real numbers bounded in magnitude by $C$.  Then for any $f_1,\dots,f_n \in L^\infty(\Disjoint \vec \Omega \to \R)$ and any exponents $1 \leq q_1,\dots,q_n \leq \infty$ with
$$ \frac{1}{q_1} + \dots + \frac{1}{q_n} = 1,$$
one has
$$
\int_{\vec \Omega^{\vec p}} \left(\prod_{i \in [n]} \Sigma_{\vec p}(f_i)\right) \ d\vec \mu^{\vec p} \lesssim_{C,n}
\left(\prod_{i \in [n]} \sum_{j \in [d]} (\E_{\mu_j} |(f_i)_j|^{q_i})^{1/q_i}\right) \vec \mu(\vec \Omega)^{\vec p} 
$$
where we adopt the usual convention that $\E_{\mu_j} |(f_i)_j|^{q_i})^{1/q_i}$ is the essential supremum of $|(f_i)_j|$ if $q_i=\infty$.
\end{lemma}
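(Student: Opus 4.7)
The plan is to reduce the statement to classical Hölder's inequality applied termwise to the explicit expansion \eqref{formula}. That formula expresses the virtual integral $\int_{\vec\Omega^{\vec p}} \prod_{i \in [n]} \Sigma_{\vec p}(f_i)\, d\vec\mu^{\vec p}$ as a finite sum over tuples $\vec a \in \N^d$ (with $|\vec a| \leq n$, forced by surjectivity) and surjections $(j,k)\colon [n] \twoheadrightarrow [\vec a]$. Since the number of such pairs is $O_n(1)$, it suffices to bound each individual summand by $O_{C,n}(1)$ times the right-hand side of the claim.

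For a fixed $(\vec a, (j,k))$, I first observe that $|\binom{\vec p}{\vec a}| \lesssim_{C,n} 1$ because $|\vec p| \leq C$ componentwise and $|\vec a| \leq n$. Next, since $\mu_j(\Omega_j) > 0$, I rewrite $\vec\mu(\vec\Omega)^{\vec p - \vec a} = \vec\mu(\vec\Omega)^{\vec p}/\vec\mu(\vec\Omega)^{\vec a}$. Then I apply classical Hölder on the honest product space $(\vec\Omega^{\vec a}, \vec\mu^{\vec a})$ with the exponents $q_i$, obtaining
\[
\left|\int_{\vec\Omega^{\vec a}} \prod_{i \in [n]} f_i(j_i, \omega_{j_i,k_i})\, d\vec\mu^{\vec a}\right|
\leq \prod_{i \in [n]} \left(\int_{\vec\Omega^{\vec a}} |f_i(j_i,\omega_{j_i,k_i})|^{q_i}\, d\vec\mu^{\vec a}\right)^{1/q_i}.
\]
Each factor depends on only a single coordinate of $\vec\Omega^{\vec a}$, so Fubini evaluates it as $\vec\mu(\vec\Omega)^{\vec a} \cdot \E_{\mu_{j_i}} |(f_i)_{j_i}|^{q_i}$. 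Taking the $1/q_i$ power and multiplying over $i$, the exponents $\sum_i 1/q_i = 1$ collect into precisely one factor of $\vec\mu(\vec\Omega)^{\vec a}$, which cancels the denominator from the previous step. The net bound on the summand is therefore $O_{C,n}(1) \cdot \vec\mu(\vec\Omega)^{\vec p} \cdot \prod_{i \in [n]} (\E_{\mu_{j_i}}|(f_i)_{j_i}|^{q_i})^{1/q_i}$.

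It remains to sum over $(\vec a, (j,k))$. The choices of $\vec a$ and of the $k$-coordinate are absorbed into an $O_n(1)$ constant, so I am reduced to summing $\prod_{i \in [n]} (\E_{\mu_{j_i}}|(f_i)_{j_i}|^{q_i})^{1/q_i}$ over sequences $j_i \in [d]$ arising from surjections. Overestimating by summing over \emph{all} functions $j\colon [n] \to [d]$ and commuting the sum with the product yields the desired factorisation $\prod_{i \in [n]} \sum_{j \in [d]} (\E_{\mu_j}|(f_i)_j|^{q_i})^{1/q_i}$. The only real difficulty is index bookkeeping—verifying that the finite sum from \eqref{formula} really does factorise across $i$ after the classical Hölder step—but once the coordinates of $\vec\Omega^{\vec a}$ are separated via Fubini this is routine, and the case $q_i = \infty$ is handled by the usual essential-supremum convention inside the same computation.
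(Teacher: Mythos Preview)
Your proof is correct and follows exactly the approach the paper indicates (the paper merely remarks that the lemma follows ``from the H\"older and triangle inequalities'' applied to the expansion \eqref{formula}, without spelling out details). Your bookkeeping---bounding $\binom{\vec p}{\vec a}$ by $O_{C,n}(1)$, applying classical H\"older with the $q_i$ to each genuine integral over $\vec\Omega^{\vec a}$, using Fubini to factor the single-coordinate integrals, and then overcounting the surjections by all maps $j\colon[n]\to[d]$ to get the product of sums---is precisely what the paper has in mind.
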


When $\vec p$ consists of natural numbers, we clearly have the non-negativity relation
$$ \int_{\vec \Omega^{\vec p}} F^2 \ d\vec \mu^{\vec p} \geq 0$$
whenever $F \in L^\infty(\vec \Omega^{\vec p} \to \R)$.  However, we caution that such non-negativity relations can break down for virtual integrals in the fractional exponent setting, even if we continue to insist that all the exponents are positive.  For instance, if $d=1$, $\mu(\Omega)=1$, and $f \in L^\infty(\Omega)$ has expectation zero, $\E_{\mu} f = 0$, one can check that
$$ \int_{\Omega^p} \Sigma_p(f)^4\ d\mu^p = p \Var_{\mu}(f^2) + p (3p-2) (\E_{\mu}(f^2))^2 $$
and the right-hand side can be negative when $p < 2/3$.

On the other hand, we do have the following non-negativity property, implicit in \cite{bct}, which will power our heat flow monotonicity results:

\begin{theorem}[Non-negativity]\label{non-neg}  Let $A \geq 2$, and suppose that $0 < \eps \leq A^{-1}$.  Let $(\vec \Omega,\vec \mu)$ be a $d$-tuple of measure spaces obeying \eqref{finite}, and let $\vec p \in \R^d$ be a $d$-tuple of real numbers $p_j$ with $A^{-1} \leq p_j \leq A$.  For each $j \in [d]$, let $B_j^0 \in \R^{d \times d-1}$ be a matrix of norm at most $A$, and let $B \in L^\infty(\Disjoint \vec \Omega \to \R^{d \times d-1})$ be a matrix-valued function such that
\begin{equation}\label{beo}
 B(j,\omega_j) = B_j^0 + O(\eps)
\end{equation}
for all $(j,\omega_j) \in \Disjoint \vec \Omega$.  Writing
\begin{equation}\label{mo}
 M^0 = \sum_{j \in [d]} p_j B_j^0 (B_j^0)^T \in \R^{d \times d}
\end{equation}
and let $M \in {\mathcal L}^\infty(\vec \Omega^{\vec p}) \otimes \R^{d \times d}$ be the virtual matrix-valued function
$$ M \coloneqq \Sigma_{\vec p}( B B^T ).$$
\begin{itemize}
\item[(i)]  One has
$$ \int_{\vec \Omega^{\vec p}} \det(M)\ d\vec \mu^{\vec p} = (\det(M_0) + O( A^{O(1)} \eps )) \vec \mu(\vec \Omega)^{\vec p}.$$
\item[(ii)]  Suppose that $M_0$ is positive definite with all singular values between $A^{-1}$ and $A$, and suppose that 
$$ (1 - C A^C \eps) M_0 - B_j^0 $$
is positive semi-definite for each $j \in [d]$, where $C$ is sufficiently large depending on $d$.  Then one has
$$
 \int_{\vec \Omega^{\vec p}} \{ \phi, \phi \} \ d\vec \mu^{\vec p} \geq 0
$$
for all virtual vector-valued functions $\phi \in {\mathcal L}^\infty(\vec \Omega^{\vec p} \to \R^{d \times 1})$, where $\{,\}$ is the bilinear form
$$ \{ u, v \} \coloneqq \det(M) \Sigma_{\vec p}(uv^T) - \Sigma_{\vec p}(u B^T) \adj(M) \Sigma_{\vec p}(B v^T).$$
\end{itemize}
\end{theorem}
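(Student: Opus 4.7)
\noindent\emph{Plan for the proof of Theorem \ref{non-neg}.}

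For part (i), the main observation is that the function $(j,\omega_j)\mapsto B_j^0(B_j^0)^T$ on $\Disjoint\vec\Omega$ depends only on the index $j$, so the summed virtual function $\Sigma_{\vec p}(B^0(B^0)^T)$ collapses to the genuine constant $\sum_{j} p_j B_j^0(B_j^0)^T = M_0$. Writing $\Delta := BB^T - B^0(B^0)^T$, hypothesis \eqref{beo} yields $\|\Delta\|_{L^\infty(\Disjoint\vec\Omega)}=O(A\eps)$ and $M = M_0 + \Sigma_{\vec p}(\Delta)$. Taylor-expanding the polynomial $\det\colon\R^{d\times d}\to\R$ around the constant $M_0$ writes $\det(M)-\det(M_0)$ as a sum of $d$ homogeneous terms, each $k$-linear in the virtual entries of $\Sigma_{\vec p}(\Delta)$ with coefficients bounded by $A^{O(1)}$ (since $|M_0|\lesssim A^{O(1)}$). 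Applying Lemma \ref{holder} monomial by monomial bounds each such term's virtual integral by $A^{O(1)}\eps^k\vec\mu(\vec\Omega)^{\vec p}$, establishing part (i).

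For part (ii) the starting point is the Schur complement / matrix determinant identity
$$\det\begin{pmatrix} M & w\\ w^T & c\end{pmatrix}=\det(M)\,c-w^T\adj(M)\,w,$$
a universal polynomial identity that therefore transfers to the virtual algebra. Choosing $w:=\Sigma_{\vec p}(Bu^T)$ and $c:=\Sigma_{\vec p}(uu^T)$ yields the identification $\{u,u\}=\det(\tilde M)$ of virtual functions, with $\tilde M$ the obvious augmented virtual matrix. In the classical case $\vec p\in\N^d$, the matrix $\tilde M(\omega)$ is the Gram matrix of the augmented vectors $\begin{pmatrix} B(j,\omega_{j,k})_{:,\ell}\\ u_\ell(j,\omega_{j,k})\end{pmatrix}$ and is therefore positive semi-definite with $\det\tilde M\geq 0$ pointwise; this is the Cauchy-Schwarz template to be virtualised.

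To virtualise it I first handle the constant-$B$ case $B\equiv B^0$: then $M=M_0$, $\det(M)$, and $\adj(M)$ are genuine constants, and the two-factor formula \eqref{paj} (together with \eqref{id-2}) evaluates $\int\{u,u\}\,d\vec\mu^{\vec p}/(\det(M_0)\vec\mu(\vec\Omega)^{\vec p})$ explicitly. Using $\adj(M_0)=\det(M_0)M_0^{-1}$, the answer splits into (a) a bias term $\sum_j p_j|\E_{\mu_j} u_j|^2 - m^T M_0^{-1} m$ with $m:=\sum_j p_j B_j^0(\E_{\mu_j} u_j)^T$, nonnegative by Cauchy-Schwarz against the spectral decomposition $M_0=\sum_j p_j B_j^0(B_j^0)^T$; and (b) a variance term $\sum_j p_j\,\mathrm{tr}\bigl((I_{d-1}-(B_j^0)^T M_0^{-1} B_j^0)\Cov_{\mu_j}(u_j)\bigr)$, nonnegative because the positivity hypothesis (which I read as $(1-CA^C\eps)M_0 - p_j B_j^0(B_j^0)^T\succeq 0$) forces $(B_j^0)^T M_0^{-1} B_j^0\preceq(1-CA^C\eps)p_j^{-1}I_{d-1}$, and in fact leaves a positive margin of order $A^{-O(1)}\eps$. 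Finally, general $B$ close to $B^0$ is handled by the perturbation argument of part (i): expanding $\det M$ and $\adj M$ in powers of $\Sigma_{\vec p}(\Delta)$ perturbs the quadratic form in $u$ by $O(A^{O(1)}\eps)\sum_j p_j\E_{\mu_j}|u_j|^2$, which the margin from the constant-$B$ step absorbs.

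The main obstacle will be the bookkeeping in this last perturbation step, since the corrections to $\det(M)$ and $\adj(M)$ are virtual rather than classical functions that couple multiplicatively to $\Sigma_{\vec p}(uB^T)$ and $\Sigma_{\vec p}(Bu^T)$; one must carefully apply Lemma \ref{holder} to the resulting higher-order virtual integrals and verify that the total error in the quadratic form on $u$ does not exceed the margin $\gtrsim A^{-O(1)}\eps$ built into the constant-$B$ calculation.
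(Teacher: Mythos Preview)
Your approach to part (i) is correct and essentially identical to the paper's.

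For part (ii), your overall strategy---replace $\det M$ and $\adj M$ by the constants $\det M_0$ and $\adj M_0$, then evaluate the resulting quadratic form in $u$ via \eqref{id-2} and \eqref{paj}---matches the paper's, but there is a real gap in the perturbation step. You claim the error $O(A^{O(1)}\eps)\sum_j p_j\E_{\mu_j}|u_j|^2$ is absorbed by ``the margin from the constant-$B$ step'', but that margin is not uniform in $u$. Your bias term $\sum_j p_j|\E_{\mu_j} u_j|^2 - m^T M_0^{-1}m$ vanishes exactly when $\E_{\mu_j}u_j = hB_j^0$ for some fixed row vector $h\in\R^d$ (the equality case of your Cauchy--Schwarz), while your variance margin scales only with $\sum_j p_j\Var_{\mu_j}u_j$, not with $\sum_j p_j\E_{\mu_j}|u_j|^2$. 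So for $u$ close to a constant of the form $hB^0$ the constant-$B$ main term is $o(\eps)\sum_j p_j\E_{\mu_j}|u_j|^2$ while the perturbation error is $\gtrsim A^{-O(1)}\eps\sum_j p_j\E_{\mu_j}|u_j|^2$, and the inequality fails.

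The paper closes this gap by first exploiting an \emph{exact} null-space identity for the true (variable) $B$: since $M\adj(M)=\det(M)I_d$ holds in the virtual algebra, one has $\{hB,v\}=0$ for every $h\in\R^d$, hence $\{\phi,\phi\}=\{\phi-hB,\phi-hB\}$. Choosing $h$ so that $\sum_j p_j(\E_{\mu_j}\phi_j)(B_j^0)^T=0$ kills the bias contribution to the negative term $\Sigma_{\vec p}(\phi(B^0)^T)(M_0)^{-1}\Sigma_{\vec p}(B^0\phi^T)$ entirely. After this normalisation the bias part of $(1-CA^C\eps)\int\Sigma_{\vec p}(\phi\phi^T)$ stands unopposed and is trivially nonnegative, while the variance-versus-variance comparison is exactly the one you describe and is handled by the hypothesis. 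Your Schur-complement identification $\{u,u\}=\det\tilde M$ is correct but plays no role in either argument.

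A smaller point: you read the positivity hypothesis with an extra factor $p_j$. The intended hypothesis (there is a typo in the statement) is $(1-CA^C\eps)M_0 - B_j^0(B_j^0)^T\succeq 0$, equivalently that all singular values of $B_j^0(M_0)^{-1/2}$ are at most $(1-CA^C\eps)^{1/2}$; with your reading the variance bound would already fail whenever some $p_j<1$.
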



\begin{proof}  We begin with (i).  From \eqref{beo} we can write
$$ B B^T = B^0 (B^0)^T + O(A\eps) $$
on $\Disjoint \vec \Omega$, where $B^0 \coloneqq \Disjoint \vec \Omega \to \R^{d \times d-1}$ is the function that maps $(j,\omega_j)$ to $B_j^0$ for all $(j,\omega_j) \in \Disjoint \vec \Omega$.  Thus
$$ M = M_0 + \eps \Sigma_{\vec p}( N )$$
for some matrix function $N \in L^\infty(\Disjoint \vec \Omega \to \R^{d \times d})$ of norm $O(A)$.  Taking determinants and adjugates (noting that $A\eps \leq 1$ and $M_0 = O(A^2)$) we conclude that
\begin{equation}\label{dete}
 \det(M) = \det(M_0) + \eps  E 
\end{equation}
and also
\begin{equation}\label{adje}
 \adj(M) = \adj(M_0) + \eps  E' 
\end{equation}
where the virtual function $E$ and all components of the virtual matrix function $E'$ are the sum of $O(1)$ terms of the form $\prod_{i \in [n]} \Sigma_{\vec p}(f_i)$ with $n=O(1)$ and each $f_i$ of norm $O(A^{O(1)})$ in $L^\infty(\Disjoint \vec \Omega \to \R)$.  In particular, from Lemma \ref{holder} we have
$$ \int_{\vec \Omega^{\vec p}} \det(M)\ d\vec \mu^{\vec p} = \det(M_0) \vec \mu(\vec \Omega)^{\vec p} + O( A^{O(1)} \eps \vec \mu(\vec \Omega)^{\vec p}  )$$
which gives (i).

Now we turn to (ii).  Observe that the bilinear form $\{,\}$ is symmetric, and that $h B$ is in the null space for this form for any row vector $h \in \R^d$, since by \eqref{Adj} one has
\begin{align*}
 \{ h B, v \}  &= \det(M) \Sigma_{\vec p}(h B v^T) - \Sigma_{\vec p}(h B B^T) \adj(M) \Sigma_{\vec p}(B v^T) \\
&= \det(M) h \Sigma_{\vec p}(B v^T) - h M  \adj(M) \Sigma_{\vec p}(B v^T) \\
&= 0.
\end{align*}
Hence we have
\begin{equation}\label{phib}
 \{ \phi, \phi \} = \{ \phi - h B, \phi - hB \}
\end{equation}
for any $h \in \R^d$.

Next, we claim that we can find $h \in \R^d$ for which we have 
\begin{equation}\label{som}
 \Sigma_{\vec p}\left(\E_{\vec \mu} (\phi - hB) (B^0)^T\right) = 0.
\end{equation}
Indeed, the left-hand side expands as the constant function
$$ \sum_{j \in [d]} p_j \E_{\mu_j} \phi_j (B^0_j)^T - h \sum_{j \in [d]} p_j \E_{\mu_j} B_j (B^0_j)^T$$
where $B_j \colon \omega_j \mapsto B(j,\omega_j)$ are the components of $B$.  From \eqref{beo}, \eqref{mo} we have
$$ \sum_{j \in [d]} p_j \E_{\mu_j} B_j (B^0_j)^T = M_0 + O(A^{O(1)} \eps ) $$
and in particular this matrix is invertible since the singular values of $M_0$ are at least $C A^C \eps$.  Thus we can find $h \in \R^d$ obeying \eqref{som}; by \eqref{phib} we may thus assume without loss of generality that we have the normalisation
\begin{equation}\label{lip}
  \Sigma_{\vec p}(\E_{\vec \mu} \phi (B^0)^T)) = 0.
\end{equation}

From \eqref{beo}, \eqref{dete}, \eqref{adje} one can write
$$ \{ \phi, \phi \} = \det(M^0) \Sigma_{\vec p}(\phi \phi^T) - \Sigma_{\vec p}(\phi (B^0)^T) \adj(M^0) \Sigma_{\vec p}(B^0 \phi^T) + \eps E'' $$
where the virtual function $E'' \in {\mathcal L}^\infty(\Disjoint \vec \Omega \to \R)$ is the sum of $O(1)$ terms of the form $\left(\prod_{i \in [n]} \Sigma_{\vec p}(f_i)\right) \Sigma_{\vec p}(\phi^i) \Sigma_{\vec p}(\phi^{i'})$ with $n=O(1)$ and $f_1,\dots,f_n$ of norm $O(A^{O(1)})$ in $L^\infty(\Disjoint \vec \Omega)$, and $\phi^i, \phi^{i'}$ two components of $\phi$.  From Lemma \ref{holder} we thus have
\begin{align*}
 \int_{\vec \Omega^{\vec p}} \{ \phi, \phi \}\ d\vec \mu^{\vec p} &= 
\int_{\vec \Omega^{\vec p}} \det(M^0) \Sigma_{\vec p}(\phi \phi^T) - \Sigma_{\vec p}(\phi (B^0)^T) \adj(M^0) \Sigma_{\vec p}(B^0 \phi^T) \ d\vec \mu^{\vec p} \\
&\quad + O\left( A^{O(1)} \eps \sum_{j \in [d]} (\E_{\mu_j} \phi_j \phi_j^T) \vec \mu(\vec \Omega)^{\vec p} \right).
\end{align*}
From Lemma \ref{basic} (extended to fractional exponents) and the fact that all singular values of $M^0$ are $A^{O(1)}$, one has
$$ (\E_{\mu_j} \phi_j \phi_j^T) \vec \mu(\vec \Omega)^{\vec p} \lesssim A^{O(1)}
\int_{\vec \Omega^{\vec p}} \Sigma_{\vec p}(\phi \phi^T)\ d\vec \mu^{\vec p},$$
and hence  we have (using \eqref{Adj})
\begin{align*}
& \int_{\vec \Omega^{\vec p}} \{ \phi, \phi \}\ d\vec \mu^{\vec p} \geq 
\det(M^0) \times \\
&\quad \int_{\vec \Omega^{\vec p}} (1-CA^C \eps) \Sigma_{\vec p}(\phi \phi^T) - \Sigma_{\vec p}(\phi (B^0)^T) (M^0)^{-1} \Sigma_{\vec p}(B^0 \phi^T) \ d\vec \mu^{\vec p}.
\end{align*}
We split $\phi = \E_{\vec \mu} \phi + \psi$, where
$$ \psi \coloneqq \phi - \E_{\vec \mu} \phi$$
is the mean zero component of $\phi$.  From \eqref{lip} we have
$$ \Sigma_{\vec p}(\phi (B^0)^T) = \Sigma_{\vec p}(\psi (B^0)^T) $$
and from Lemma \ref{basic} (extended to fractional $p$) we have
$$ \int_{\vec \Omega^{\vec p}} \Sigma_{\vec p}(\phi\phi^T) \ d{\vec\mu}^{\vec p} \geq \vec \mu(\vec \Omega)^{\vec p} \sum_{j \in [d]}
p_j \E_{\mu_j} \psi_j \psi_j^T $$
and
$$ \int_{\vec \Omega^{\vec p}} \Sigma_{\vec p}(\psi' (B^0)^T) (M^0)^{-1} \Sigma_{\vec p}(B^0 \psi^T))\ d\vec\mu^{\vec p} = \vec \mu(\vec \Omega)^{\vec p}  \sum_{j \in [d]} p_j \E_{\mu_j} \psi_j (B^0_j)^T (M^0)^{-1} B^0_j \psi_j^T.$$
From hypothesis, the matrix
$$ (1 - CA^C \eps) - (M^0)^{-1/2} (B^0_j)^T B^0_j (M^0)^{-1/2}$$
is positive semi-definite, hence the singular values of $B^0_j (M^0)^{-1/2}$ do not exceed $(1 - CA^C \eps)^{1/2}$.  This implies the pointwise bound
$$ (1 - CA^C \eps) \psi_j \psi_j^T \geq \psi_j (B^0_j)^T (M^0)^{-1} B^0_j \psi_j^T$$
and the claim follows.
\end{proof}

If $\mu,\nu$ are two finite measures on $\Omega$ and $p$ is a natural number, then $\mu^p, \nu^p$ are finite measures on $\Omega^p$, and we have the telescoping bound
\begin{equation}\label{tv-power}
\begin{split}
 \| \mu^p - \nu^p\|_{\TV} &\leq \sum_{j \in [p]} \| \mu^{j-1} \times (\mu-\nu) \times \nu^{p-j}  \|_{\TV} \\
&= \sum_{j \in [p]} \|\mu\|_{\TV}^{j-1} \| \nu \|_{\TV}^{p-j} \| \mu - \nu \|_{\TV} \\
&\leq p (\| \mu\|_{\TV}^{p-1} + \| \nu \|_{\TV}^{p-1} ) \| \mu - \nu \|_{\TV} \\
&\lesssim p (\| \mu\|_{\TV}^{p} + \| \nu \|_{\TV}^{p} ) \frac{\| \mu - \nu \|_{\TV}}{\| \mu\|_{\TV} + \| \nu \|_{\TV}}.
\end{split}
\end{equation} 

We have the following variant of the above bound for fractional products of measures:

\begin{lemma}[Total variation bound]\label{hold}  Let $(\vec \Omega, \vec \mu)$ and $(\vec \Omega, \vec \nu)$ be $d$-tuples of measure spaces on a common domain $\vec \Omega$ obeying \eqref{finite}, and let $\vec p$ be a $d$-tuple of positive reals bounded by $C$.  Let $F_1,\dots,F_n \in L^\infty(\Disjoint \vec \Omega \to \R)$ be bounded in norm by $C$, and let $G$ be the virtual function $G \coloneqq \prod_{i \in [n]} \Sigma_{\vec p}(F_i)$.  Then
$$ \left|\int_{\vec \Omega^p} G\ d\vec \mu^{\vec p} - \int_{\vec \Omega^p} G\ d\vec \nu^{\vec p}\right| \lesssim_{C,n} (\vec \mu(\vec \Omega)^{\vec p} + 
\vec \nu(\vec \Omega)^{\vec p}) \sum_{j \in [d]} \frac{\|\mu_j-\nu_j\|_{\TV}}{\|\mu_j\|_{\TV} + \| \nu_j \|_{\TV}}.$$
\end{lemma}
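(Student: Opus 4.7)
The plan is to expand both virtual integrals via formula~\eqref{formula}, reduce each resulting term to a product of per-coordinate factors, and then separately estimate the ``total mass'' part and the ``normalised'' part of the difference. The main obstacle will be avoiding losses from mixed products $\prod_{j \in S} m_j^{p_j} \prod_{j \notin S} n_j^{p_j}$ (with $m_j \coloneqq \mu_j(\Omega_j)$, $n_j \coloneqq \nu_j(\Omega_j)$) that naive telescoping would produce; I will handle this via a dichotomy based on how close $\vec\mu$ and $\vec\nu$ are in normalised total variation.

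First I would apply~\eqref{formula} to each of $\int G\, d\vec\mu^{\vec p}$ and $\int G\, d\vec\nu^{\vec p}$.  Since the components of $\vec p$ are bounded by $C$ and $n$ is fixed, this expresses the difference as a sum over finitely many $\vec a \in \N^d$ (with $|\vec a| \leq n$) and surjections $\sigma = (j,k)\colon [n] \twoheadrightarrow [\vec a]$ of expressions $\binom{\vec p}{\vec a}(T_{\vec a, \sigma}(\vec\mu) - T_{\vec a, \sigma}(\vec\nu))$, where
\[
T_{\vec a, \sigma}(\vec\lambda) = \prod_{j \in [d]} L_j(\lambda_j), \qquad L_j(\lambda) \coloneqq \int_{\Omega_j^{a_j}} H_j\, d\lambda^{a_j} \cdot \lambda(\Omega_j)^{p_j - a_j}
\]
for some $H_j \in L^\infty(\Omega_j^{a_j} \to \R)$ of norm $O_{C,n}(1)$ (a product of factors drawn from $F_1,\ldots,F_n$).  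The binomial coefficients and the number of summands are $O_{C,n}(1)$, so it suffices to bound each $|T_{\vec a, \sigma}(\vec\mu) - T_{\vec a, \sigma}(\vec\nu)|$ separately.  Writing $M \coloneqq \prod_j m_j^{p_j}$, $N \coloneqq \prod_j n_j^{p_j}$, I would then factor out the masses as $L_j(\lambda) = \lambda(\Omega_j)^{p_j} \tilde L_j(\lambda)$, where $\tilde L_j(\lambda) \coloneqq \E_{\tilde\lambda^{a_j}} H_j$ is an expectation against the normalised probability measure $\tilde\lambda \coloneqq \lambda/\lambda(\Omega_j)$ (so $|\tilde L_j| = O_{C,n}(1)$).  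This gives the decomposition
\[
T(\vec\mu) - T(\vec\nu) = (M - N)\prod_{j \in [d]} \tilde L_j(\mu_j) + N\Bigl(\prod_{j \in [d]} \tilde L_j(\mu_j) - \prod_{j \in [d]} \tilde L_j(\nu_j)\Bigr).
\]

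The hardest part will be estimating the first summand, which reduces to the scalar inequality
\[
|M - N| \lesssim_{C,d} (M + N) \sum_{j \in [d]} \frac{|m_j - n_j|}{m_j + n_j}.
\]
Naive one-step-at-a-time telescoping in $j$ produces mixed products of the form $\prod_{j'<j} m_{j'}^{p_{j'}} \prod_{j'>j} n_{j'}^{p_{j'}}$ that can be arbitrarily larger than $M + N$ (e.g.\ when $d=2$ and $m_1 n_2 \gg m_1 m_2 + n_1 n_2$), so a case split is needed.  My plan is: when $\eps \coloneqq \sum_j |m_j - n_j|/(m_j + n_j) \geq 1/2$, just use the trivial bound $|M - N| \leq M + N \leq 2(M+N)\eps$; otherwise every individual ratio is $\leq 1/2$, forcing $m_j/n_j \in [1/3,3]$ for each $j$, so all mixed products are comparable to $M + N$, and the telescoping closes cleanly using the mean value estimate $|m_j^{p_j} - n_j^{p_j}| \lesssim_C (m_j + n_j)^{p_j - 1}|m_j - n_j|$ (which is only safe when the ratio $m_j/n_j$ is bounded, hence the case split).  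Using $|m_j - n_j| \leq \|\mu_j - \nu_j\|_{\TV}$ then dispatches this summand.

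For the second summand I would telescope $\prod_j \tilde L_j(\mu_j) - \prod_j \tilde L_j(\nu_j)$ across $j$, bounding the bystander factors $\tilde L_{j'}$ by $O_{C,n}(1)$ and the single active factor $|\tilde L_j(\mu_j) - \tilde L_j(\nu_j)|$ by a further telescoping of $\tilde\mu_j^{a_j}$ to $\tilde\nu_j^{a_j}$ one copy at a time (as in~\eqref{tv-power}), yielding $O_{C,n}(\|\tilde\mu_j - \tilde\nu_j\|_{\TV})$.  The splitting $\mu_j/m_j - \nu_j/n_j = (\mu_j - \nu_j)/m_j + \nu_j(n_j - m_j)/(m_j n_j)$ combined with its symmetric analogue (with $m_j$ and $n_j$ swapped) yields $\|\tilde\mu_j - \tilde\nu_j\|_{\TV} \lesssim \|\mu_j - \nu_j\|_{\TV}/\max(m_j,n_j) \lesssim \|\mu_j - \nu_j\|_{\TV}/(m_j + n_j)$.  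Multiplying through by the prefactor $N \leq M + N$ and summing over $j$ then completes the argument.
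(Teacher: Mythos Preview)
Your proposal is correct and shares the paper's core idea --- a large/small dichotomy on the normalised total variation distances, followed by telescoping in the small regime --- but the organisation is somewhat different. The paper first rescales so that $\|\mu_j\|_{\TV}=1$, then applies the dichotomy at the top level: if any single ratio $\|\mu_j-\nu_j\|_{\TV}/(\|\mu_j\|_{\TV}+\|\nu_j\|_{\TV})$ exceeds a fixed threshold $\eps$, it invokes the H\"older bound (Lemma~\ref{holder}) to control each virtual integral separately by $O(\vec\mu(\vec\Omega)^{\vec p})$ and $O(\vec\nu(\vec\Omega)^{\vec p})$; otherwise all masses are comparable, and it bounds $\|\vec\mu^{\vec a}-\vec\nu^{\vec a}\|_{\TV}$ directly via~\eqref{tv-power} and feeds this into~\eqref{formula}. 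You instead expand via~\eqref{formula} first, factor each term into per-coordinate pieces $L_j(\lambda_j)=\lambda_j(\Omega_j)^{p_j}\tilde L_j(\lambda_j)$, and then apply the dichotomy only to the scalar mass difference $|M-N|$, handling the normalised part $\prod_j\tilde L_j(\mu_j)-\prod_j\tilde L_j(\nu_j)$ by straightforward telescoping with no case split (since the $\tilde L_j$ are uniformly bounded regardless of how disparate the masses are). Your route is slightly more self-contained in that it avoids any appeal to Lemma~\ref{holder}, and it isolates exactly where the dichotomy is actually needed; the paper's route is a bit quicker to write down once Lemma~\ref{holder} is in hand.
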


\begin{proof}  By rescaling $\mu_j,\nu_j$ by $1/\|\mu_j \|_{\TV}$, we may normalise $\|\mu_j \|_{\TV}=1$ for all $j$.  Let $\eps>0$ be a sufficiently small quantity depending on $d,C,n$.  If we have
$$ \|\mu_j-\nu_j\|_{\TV} \geq \eps ( \|\mu_j \|_{\TV} + \|\nu_j \|_{\TV} )$$
for some $j$, then the claim follows from the triangle inequality and Lemma \ref{holder}, so we may assume that
$$ \|\mu_j-\nu_j\|_{\TV} < \eps ( \|\mu_j \|_{\TV} + \|\nu_j \|_{\TV} ).$$
By the normalisation $\|\mu_j \|_{\TV}=1$ and the triangle inequality, this implies (for $\eps$ small enough) that
$$ \|\mu_j-\nu_j\|_{\TV} \lesssim \eps.$$
This and \eqref{tv-power} (together with the normalisation $\|\mu_j \|_{\TV}=1$) implies that
$$ \|\vec \mu^{\vec a}-\vec \nu^{\vec a}\|_{\TV} \lesssim_{\vec a} \eps$$
for any $d$-tuple $\vec a$ of natural numbers.  Applying \eqref{formula} we obtain
$$ \int_{\vec \Omega^p} G\ d\vec \mu^{\vec p} - \int_{\vec \Omega^p} G\ d\vec \nu^{\vec p} \lesssim_{C,n} \eps$$
giving the claim.
\end{proof}

From Lemma \ref{holder} and Lemma \ref{hold} we see that the virtual integration operation is continuous with respect to the sup norm topology on the integrands, and the total variation topology on the measure:

\begin{corollary}[Continuity of virtual integration]\label{continuity} Let $(\vec \Omega, \vec \mu[x])$ be a $d$-tuple of measure spaces $(\Omega_j,\mu_j[x])$ on a common domain $\vec \Omega$ obeying \eqref{finite} parameterised by elements $x$ of a topological space $X$, and let $\vec p$ be a $d$-tuple of positive reals.  Let $n$ be a natural number, and for each $i \in [n]$ and $x \in X$, let $F_i[x]$ be an element of $L^\infty(\Disjoint \vec \Omega \to \R)$.  Suppose that the map $x \mapsto F_i[x]$ is a continuous map (using the $L^\infty$ metric on $F_i[x]$, and suppose also that the maps $x \mapsto \mu_j[x]$ are continuous (using the total variation metric on $\mu_j[x]$).  Then the map
$$ x \mapsto \int_{\vec \Omega^p} \prod_{i \in [n]} \Sigma_{\vec p}(F_i[x])\ d\vec \mu[x]^{\vec p}$$
is continuous from $X$ to $\R$.
\end{corollary}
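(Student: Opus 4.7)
The plan is to fix $x_0 \in X$ and prove continuity at $x_0$ by a two-step triangle inequality that separates variation in the weights $F_i$ from variation in the measures $\mu_j$. Writing
$$I(x) \coloneqq \int_{\vec \Omega^{\vec p}} \prod_{i \in [n]} \Sigma_{\vec p}(F_i[x])\ d\vec \mu[x]^{\vec p},$$
I would bound $|I(x) - I(x_0)|$ by splitting it through the hybrid quantity $\int_{\vec \Omega^{\vec p}} \prod_{i \in [n]} \Sigma_{\vec p}(F_i[x])\ d\vec \mu[x_0]^{\vec p}$ into a ``measure part'' and a ``function part.''

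First, observe that by the assumed continuity, we may restrict to a neighbourhood of $x_0$ on which the quantities $\|F_i[x]\|_{L^\infty(\Disjoint \vec\Omega)}$ and $\|\mu_j[x]\|_{\TV}$ are uniformly bounded by some constant $C'$ depending only on $C$ and $x_0$. Moreover, since $\mu_j[x_0]$ satisfies \eqref{finite} and $\|\mu_j[x] - \mu_j[x_0]\|_{\TV} \to 0$, we have $\|\mu_j[x]\|_{\TV}$ bounded uniformly below by $\frac{1}{2}\|\mu_j[x_0]\|_{\TV} > 0$ for $x$ in a small enough neighbourhood, so the hypotheses of Lemmas \ref{holder} and \ref{hold} are satisfied uniformly.

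For the measure part, I would apply Lemma \ref{hold} with $\vec\mu = \vec\mu[x]$, $\vec\nu = \vec\mu[x_0]$, and the fixed weights $F_i[x]$, giving a bound
$$\lesssim_{C',n} (\vec\mu[x](\vec \Omega)^{\vec p} + \vec\mu[x_0](\vec \Omega)^{\vec p}) \sum_{j \in [d]} \frac{\|\mu_j[x]-\mu_j[x_0]\|_{\TV}}{\|\mu_j[x]\|_{\TV} + \| \mu_j[x_0] \|_{\TV}},$$
which tends to zero by the total variation continuity of the $\mu_j$.

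For the function part, I would use the telescoping identity
$$\prod_{i \in [n]} \Sigma_{\vec p}(F_i[x]) - \prod_{i \in [n]} \Sigma_{\vec p}(F_i[x_0]) = \sum_{i_0 \in [n]} \Bigl(\prod_{i < i_0} \Sigma_{\vec p}(F_i[x])\Bigr) \, \Sigma_{\vec p}(F_{i_0}[x] - F_{i_0}[x_0]) \, \Bigl(\prod_{i > i_0} \Sigma_{\vec p}(F_i[x_0])\Bigr),$$
integrate against $d\vec \mu[x_0]^{\vec p}$, and apply Lemma \ref{holder} (with $q_i = n$) to each of the $n$ terms. Because the factor $F_{i_0}[x] - F_{i_0}[x_0]$ has $L^\infty$ norm tending to zero while the remaining factors stay uniformly bounded, this part is controlled by $\sum_{i_0} \|F_{i_0}[x] - F_{i_0}[x_0]\|_{L^\infty(\Disjoint \vec\Omega)} \cdot \vec\mu[x_0](\vec\Omega)^{\vec p}$, which tends to zero by the assumed $L^\infty$ continuity of the $F_i$.

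There is no genuine obstacle here: the work is entirely bookkeeping to invoke Lemmas \ref{holder} and \ref{hold}, the only mildly delicate point being to secure the uniform positive lower bound on $\|\mu_j[x]\|_{\TV}$ that prevents the denominator in Lemma \ref{hold} from degenerating as $x \to x_0$, which is automatic from total variation continuity and the nondegeneracy of $\mu_j[x_0]$.
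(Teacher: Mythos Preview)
Your proof is correct and follows essentially the same approach as the paper's: a two-step triangle inequality through a hybrid quantity, invoking Lemma~\ref{hold} to control the variation in the measures and Lemma~\ref{holder} (via telescoping) to control the variation in the $F_i$. The only cosmetic difference is that the paper routes through the hybrid with functions frozen at the limit point and measures at the varying point, whereas you swap these roles; both choices work identically.
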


\begin{proof}  Let $x_\alpha$ be a net in $X$ converging to a limit $x$.  By hypothesis, the total variation norms of $\mu_j[x_\alpha]$ as well as the $L^\infty$ norms of $F_i[x_\alpha]$ are bounded for sufficiently large $\alpha$, with the former also being bounded away from zero.  By continuity and Lemma \ref{holder}, the difference
$$\int_{\vec \Omega^p} \prod_{i \in [n]} \Sigma_{\vec p}(F_i[x_\alpha]) \ d\vec \mu[x_\alpha]^{\vec p} - \int_{\vec \Omega^p} \prod_{i \in [n]} \Sigma_{\vec p}(F_i[x])\ d\vec \mu[x_\alpha]^{\vec p}$$
then goes to zero as $\alpha \to \infty$, and similarly from Lemma \ref{hold} the difference
$$\int_{\vec \Omega^p} \prod_{i \in [n]} \Sigma_{\vec p}(F_i[x])\ d\vec \mu[x_\alpha]^{\vec p} - \int_{\vec \Omega^p} \prod_{i \in [n]} \Sigma_{\vec p}(F_i[x]) \ d\vec \mu[x]^{\vec p}$$
also goes to zero as $\alpha \to \infty$.  The claim now follows from the triangle inequality.
\end{proof}

We will need a change of variables formula for virtual integrals.  Given a measure space $(\Omega,\mu)$ and a measurable map $\pi\colon \Omega \to \Omega'$ into another measurable space $\Omega'$, we can define the pushforward measure $\pi_* \mu$ on $\Omega'$ by the formula
$$ \pi_* \mu(E') \coloneqq \mu( \pi^{-1}(E') ) $$
for any measurable $E' \subset \Omega'$; one then has the change of variables formula
\begin{equation}\label{change}
 \int_\Omega (\pi^* f')\ d\mu = \int_{\Omega'} f'\ d\pi_* \mu
\end{equation}
for any $f' \in L^\infty(\Omega' \to \R)$, where the pullback $\pi^* f' \in L^\infty(\Omega \to \R)$ is defined by $\pi^* f'(\omega) \coloneqq f'(\pi(\omega))$.  More generally, given a $d$-tuple $(\vec \Omega,\vec \mu)$ of measure spaces $(\Omega_j,\mu_j)$ and a $d$-tuple $\vec \pi = (\pi_1,\dots,\pi_n)$ of measurable maps $\pi_j\colon \Omega_j \to \Omega'_j$, we obtain a $d$-tuple $(\vec \Omega', \vec \pi_* \vec \mu)$ of pushforward measure spaces $(\Omega_j, (\pi_j)_* \mu_j)$.   We can interpret $\vec \pi_* \vec \mu$ as the measure on $\Disjoint \vec \Omega'$ formed by pushing forward the measure $\vec \mu$ on $\Disjoint \vec \Omega$ by the map $\vec \pi\colon \Disjoint \vec \Omega \to \Disjoint \vec \Omega'$ defined (abusing notation slightly) as
$$ \vec \pi( j, \omega_j ) \coloneqq (j, \pi_j(\omega_j) ).$$
For any $d$-tuple $\vec p$ of natural numbers $p_j$, it is then not difficult (by checking first the case when $F'$ is a tensor product of one-variable functions) to verify the change of variables formula
\begin{equation}\label{change-multi}
 \int_{\vec \Omega^{\vec p}} (\pi^* F')\ d\vec \mu^{\vec p} = \int_{(\vec \Omega')^{\vec p}} F'\ d(\pi_* \vec \mu)^{\vec p}
\end{equation}
for any $F' \in L^\infty((\vec \Omega')^{\vec p} \to \R)$, where the pullback $\pi^* F' \in L^\infty(\vec \Omega^{\vec p} \to \R)$ is now defined as
$$ \pi^* F'( (\omega_{j,k})_{(j,k) \in [\vec p]} ) \coloneqq F'( (\pi_j(\omega_{j,k}))_{(j,k) \in [\vec p]} ).$$
In particular we see that the pullback homomorphism commutes with the summation operator $\Sigma_{\vec p}$, thus
\begin{equation}\label{sigmapi}
 \Sigma_{\vec p}( \pi^* f' ) = \pi^* \Sigma_{\vec p}(f')
\end{equation}
for any $f' \in L^\infty(\Disjoint \vec \Omega' \to \R)$.  Inspired by this, we now also define a pullback operation
$$ \pi^*\colon {\mathcal L}^\infty((\vec \Omega')^{\vec p} \to \R) \to {\mathcal L}^\infty(\vec \Omega^{\vec p} \to \R) $$
on virtual functions, with $\vec p$ now a $d$-tuple of real numbers, to be the unique algebra homomorphism obeying \eqref{sigmapi}, thus in particular
$$ \pi^*\left(\prod_{i \in [n]} \Sigma_{\vec p}(f'_i)\right) = \prod_{i \in [n]} \Sigma_{\vec p}(\pi^* f_i).$$
From \eqref{formula} and \eqref{change}, we see that the change of variables formula \eqref{change-multi} is now valid
for virtual functions $F' \in L^\infty(\Disjoint \vec \Omega' \to \R)$ and arbitrary real $d$-tuples $\vec p$.

\subsection{Differentiation under the virtual integral sign}\label{diff-sec}

The power of the virtual integration formalism truly emerges when one lets the measure $\vec \mu$ vary with additional parameters such as time $t \in \R$ or space $x \in \R^d$, and differentiates with respect to these parameters.  We just discuss here the case of a time parameter $t$, as the theory of a spatial parameter is completely analogous.

We begin with some formal calculations.  Suppose that $w = w[t] \in L^\infty(\Disjoint \vec \Omega \to \R)$ is a non-negative weight that varies with a time parameter $t$ and obeys the ordinary differential equation
\begin{equation}\label{wat}
 \partial_t w[t] = c[t] w[t]
\end{equation}
pointwise for some $c = c[t] \in L^\infty(\Disjoint \vec \Omega \to \R)$ that also depends on $t$, thus $c$ is the log-derivative of $w$.  Then from the chain rule and product rule, and assuming initially that $\vec p$ is a $d$-tuple of natural numbers, one formally has the identity
$$ \partial_t d\vec \mu_{w[t]}^{\vec p} = \Sigma_{\vec p}(c[t]) d\vec \mu_{w[t]}^{\vec p}$$
and hence if one formally differentiates under the integral sign, one expects to have the derivative formula
$$ \partial_t \int_{\vec \Omega^{\vec p}} F[t] \ d\vec \mu_{w[t]}^{\vec p} =
\int_{\vec \Omega^{\vec p}} (\partial_t F[t] + \Sigma_{\vec p}(c[t]) F[t])\ d\vec \mu_{w[t]}^{\vec p}.$$

We now extend this formula to fractional exponents.  We begin with the simpler case when the integrand $F$ is independent of the parameter.

\begin{proposition}[Differentiation under the integral sign, I]\label{diff-i}  
Let $(\vec \Omega, \vec \mu)$ be a $d$-tuple of measure spaces $(\Omega_j,\mu_j)$, and let $\vec p$ be a $d$-tuple of positive reals.  For all $t$ in an open interval $I$, let $w[t] \in L^1(\Disjoint \vec \Omega \to \R)$ be a non-negative weight that is not identically zero, and let $c[t] \in L^\infty(\Disjoint \vec \Omega \to \R)$ be an additional function obeying the equation \eqref{wat} pointwise.  We also assume that the map $t \mapsto c[t]$ is continuous (using the $L^\infty$ metric on $c[t]$).  Then for any $F \in {\mathcal L}^\infty(\vec \Omega \to \R)$, one has
\begin{equation}\label{cater}
 \partial_t \int_{\vec \Omega^{\vec p}} F\ d\vec \mu_{w[t]}^{\vec p} = \int_{\vec \Omega^{\vec p}} F \Sigma_{\vec p}(c[t])\ d\vec \mu_{w[t]}^{\vec p}
\end{equation}
for all $t \in I$; in particular, the expression $\int_{\vec \Omega^{\vec p}} F\ d\vec \mu_{w[t]}^{\vec p}$ is everywhere differentiable in $t$.
\end{proposition}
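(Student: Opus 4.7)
The plan is to reduce to the case where $F$ is a monomial in the generators of ${\mathcal L}^\infty(\vec\Omega^{\vec p}\to\R)$, expand both sides of \eqref{cater} via the explicit combinatorial formula \eqref{formula}, and then match the resulting finite sums of classical integrals term by term. Both sides of \eqref{cater} are linear in $F$, and the virtual algebra is generated by monomials, so it suffices to treat $F = \prod_{i\in[n]}\Sigma_{\vec p}(f_i)$ for some $f_1,\dots,f_n \in L^\infty(\Disjoint\vec\Omega\to\R)$. Fix a compact subinterval $J\subset I$ containing the time $t_0$ at which I wish to differentiate. The $L^\infty$-continuity of $t\mapsto c[t]$ gives a uniform bound $\sup_{t\in J}\|c[t]\|_{L^\infty}\le C$; integrating the pointwise ODE \eqref{wat} then yields $w[t] = w[t_0]\exp\bigl(\int_{t_0}^t c[s]\,ds\bigr)$, so $w[t]$ and $c[t]w[t]$ are uniformly dominated on $J$ by constant multiples of $w[t_0]\in L^1$. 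This domination justifies every differentiation under a classical integral sign that follows.

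Next, I expand the left-hand side using \eqref{formula}:
$$\int_{\vec\Omega^{\vec p}} F\,d\vec\mu_{w[t]}^{\vec p} \;=\; \sum_{\vec a\in\N^d}\binom{\vec p}{\vec a}\sum_{\sigma:[n]\twoheadrightarrow[\vec a]} I_\sigma(t)\,P_{\vec a}(t),$$
where I set $I_\sigma(t) \coloneqq \int_{\vec\Omega^{\vec a}}\prod_i f_i(j^\sigma_i,\omega_{j^\sigma_i,k^\sigma_i})\,d\vec\mu_{w[t]}^{\vec a}$ and $P_{\vec a}(t) \coloneqq \vec\mu_{w[t]}(\vec\Omega)^{\vec p-\vec a}$; only finitely many terms are non-zero, so I differentiate term by term. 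The pointwise relation $\partial_t(w_j[t]\,d\mu_j) = c_j[t]\,w_j[t]\,d\mu_j$ produces two kinds of contributions: differentiating $I_\sigma$ gives $\partial_t I_\sigma(t) = \sum_{(j^*,k^*)\in[\vec a]}\int \bigl(\prod_i f_i\bigr)\,c_{j^*}[t](\omega_{j^*,k^*})\,d\vec\mu_{w[t]}^{\vec a}$, while the product and chain rules give $\partial_t P_{\vec a}(t) = \sum_{j\in[d]}(p_j-a_j)\bigl(\int c_j\,d\mu_{j,w_j[t]}\bigr)P_{\vec a+e_j}(t)$, with $e_j$ the $j$-th standard basis vector.

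I then apply the same formula \eqref{formula} to the right-hand side of \eqref{cater}, writing $F\,\Sigma_{\vec p}(c[t]) = \prod_{i\in[n+1]}\Sigma_{\vec p}(g_i)$ with $g_{n+1}\coloneqq c[t]$. Any surjection $\tau\colon[n+1]\twoheadrightarrow[\vec a']$ falls into one of two mutually exclusive cases. In Case A, $\tau(n+1)\in\tau([n])$; then $\vec a' = \vec a$, the restriction $\sigma\coloneqq\tau|_{[n]}$ is itself a surjection onto $[\vec a]$, and summing over the possible locations of $\tau(n+1)\in[\vec a]$ reproduces $\binom{\vec p}{\vec a}\,\partial_t I_\sigma(t)\,P_{\vec a}(t)$ exactly. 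In Case B, $\tau(n+1)$ is a genuinely new position; then $\vec a' = \vec a+e_j$ for a unique $j$, and (under the canonical order-preserving identification $[\vec a]\cong[\vec a+e_j]\setminus\{\tau(n+1)\}$) the restriction $\tau|_{[n]}$ is a surjection onto $[\vec a]$. Summing over the $a_j+1$ choices of the new coordinate $k^\tau_{n+1}$ produces the coefficient $(a_j+1)\binom{\vec p}{\vec a+e_j}$, and the elementary real-exponent binomial identity $(a_j+1)\binom{\vec p}{\vec a+e_j} = (p_j-a_j)\binom{\vec p}{\vec a}$ then rewrites this as $\binom{\vec p}{\vec a}(p_j-a_j)$, matching the contribution from $\partial_t P_{\vec a}$ computed in the previous paragraph.

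The main obstacle is the combinatorial bookkeeping in Case B: one has to recognise that the $a_j+1$ ways of inserting the new slot for $c[t]$ into the $j$-th coordinate of $[\vec a+e_j]$, weighted by $\binom{\vec p}{\vec a+e_j}$, collapse via the above binomial identity to exactly the factor $(p_j-a_j)$ produced by differentiating the real-exponent power $\mu_{j,w_j[t]}(\Omega_j)^{p_j-a_j}$. Once this identification is in place, summing the Case A and Case B contributions over all $\vec a$ and $\sigma$ recovers the right-hand side of \eqref{cater}, and everything else (the linearity reduction, the application of \eqref{formula}, and the classical differentiation under the integral sign) is routine given the uniform domination on $J$.
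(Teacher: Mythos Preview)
Your proof is correct and follows essentially the same approach as the paper: both reduce to monomials, expand via formula \eqref{formula}, and match terms using the same case split (the new factor $c[t]$ either shares a slot with some $f_i$ or occupies a fresh slot) together with the binomial identity $(a_j+1)\binom{\vec p}{\vec a+e_j}=(p_j-a_j)\binom{\vec p}{\vec a}$. The only cosmetic difference is that you differentiate the finite sum of classical integrals directly via dominated convergence, whereas the paper works with difference quotients and invokes Lemma~\ref{hold} to control the total variation error; the combinatorial core is identical.
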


In practice, we shall be able to upgrade differentiability to continuous differentiability by invoking Corollary \ref{continuity}.  Similarly for Proposition \ref{diff-ii} below.

\begin{proof}  By linearity, we may assume that $F$ takes the form $F = \prod_{i \in [n]} \Sigma_{\vec p}(f_i)$ for some $f_1,\dots,f_n \in L^\infty(\Disjoint \vec \Omega \to \R)$.  

Fix $t \in I$.  We may normalise $\| (\mu_j)_{w_j[t]} \|_{\TV} = 1$ for all $j \in [d]$.
From the method of integrating factors we see that
$$ w[t+h] = \exp\left( \int_t^{t+h} c[t']\ dt' \right) w[t] $$
for sufficiently small real numbers $h$, so by the continuity of $c$ we have
\begin{equation}\label{tv}
 \| \vec \mu_{w[t+h]} - (1+hc[t]) \vec \mu_{w[t]} \|_{\TV} = o(|h|) 
\end{equation}
where $o(|h|)$ denotes a quantity that goes to zero as $h \to 0$ after dividing by $|h|$.  From Lemma \ref{hold}, we then have
$$ \int_{\vec \Omega^{\vec p}} F\ d\vec \mu_{w[t+h]}^{\vec p} 
= \int_{\vec \Omega^{\vec p}} F\ d((1+hc[t])\vec \mu_{w[t]})^{\vec p} + o(|h|).$$
To abbreviate notation we now write $c[t], w[t]$ as $c,w$ respectively.  From \eqref{formula}, we may write this expression as
\begin{align*}
& \sum_{\vec a \in \N^d} \binom{\vec p}{\vec a} \sum_{(j,k)\colon [n] \twoheadrightarrow [\vec a]} \int_{\vec \Omega^{\vec a}} \left(\prod_{i \in [n]} f_i(j_i, \omega_{j_i,k_i})\right) \ d((1 + hc)\vec \mu)^{\vec a} \\
&\quad \prod_{j \in [d]} (1 + h \E_{(\mu_j)_{w_j}} c)^{p_j-a_j} + o(|h|).
\end{align*}
Note that
$$((1 + hc)\vec \mu)^{\vec a} = (1 + h \Sigma_{\vec a}(c) + o(|h|)) \vec \mu^{\vec a}.$$
Thus we can differentiate at $h=0$ and express the left-hand side of \eqref{cater} as $X+Y$, where
$$ 
X \coloneqq \sum_{\vec a \in \N^d} \binom{\vec p}{\vec a} \sum_{(j,k)\colon [n] \twoheadrightarrow [\vec a]} \int_{\vec \Omega^{\vec a}} \left(\prod_{i \in [n]} f_i(j_i, \omega_{j_i,k_i})\right) \Sigma_{\vec a}(c)\ d\vec \mu_w^{\vec a} $$
and
$$ 
Y \coloneqq \sum_{\vec a \in \N^d} \binom{\vec p}{\vec a} \sum_{(j,k)\colon [n] \twoheadrightarrow [\vec a]} \int_{\vec \Omega^{\vec a}} \left(\prod_{i \in [n]} f_i(j_i, \omega_{j_i,k_i})\right)\ d\vec \mu_w^{\vec a} \sum_{j' \in [d]} (p_{j'}-a_{j'}) \E_{(\mu_j)_{w_j}} c.$$
If we now rename $c$ as $f_{n+1}$, we see that $X$ is equal to
\begin{equation}\label{terms}
\sum_{\vec a \in \N^d} \binom{\vec p}{\vec a} \sum_{(j,k) \in {\mathcal X}_{\vec a}} \int_{\vec \Omega^{\vec a}} \left(\prod_{i \in [n+1]} f_i(j_i, \omega_{j_i,k_i})\right)\ d\vec \mu_w^{\vec a},
\end{equation}
where ${\mathcal X}_{\vec a}$ is the collection of surjections $(j,k) \colon [n+1] \twoheadrightarrow [\vec a]$ for which $(j_{n+1},k_{n+1}) = (j_i,k_i)$ for at least one $i \in [n]$.  As for $Y$, we can rewrite it (after symmetrising) as
$$ \sum_{j' \in [d]} \sum_{\vec a \in \N^d} \binom{\vec p}{\vec a} \frac{p_{j'}-a_{j'}}{a_{j'}+1} \sum_{(j,k) \in {\mathcal Y}_{\vec a, j'}} \int_{\vec \Omega^{\vec a + e_{j'}}} \left(\prod_{i \in [n+1]} f_i(j_i, \omega_{j_i,k_i})\right)\ d\vec \mu_w^{\vec a + e_{j'}}$$
where ${\mathcal X}_{\vec a,j'}$ is the collection of surjections $(j,k) \colon [n+1] \to [\vec a + e_{j'}]$ for which $j_{n+1} = j'$ and $(j_{n+1},k_{n+1}) \neq (j_i,k_i)$ for all $i \in [n]$.  Note that
$$\binom{\vec p}{\vec a} \frac{p_{j'}-a_{j'}}{a_{j'}+1}  = \binom{\vec p}{\vec a + e_{j'}}.$$  
It is then clear that $X+Y$ is equal to
$$
\sum_{\vec a' \in \N^d} \binom{\vec p}{\vec a'} \sum_{(j,k): [n+1] \twoheadrightarrow [\vec a']} \int_{\vec \Omega^{\vec a'}} \left(\prod_{i \in [n+1]} f_i(j_i, \omega_{j_i,k_i})\right)\ d\vec \mu_w^{\vec a'},$$
and the claim \eqref{cater} now follows from \eqref{formula}.
\end{proof}

Now we allow $F$ to depend on $t$.  Here we run into a technical issue in that we have not imposed a topology on the space ${\mathcal L}^\infty(\Disjoint \Omega \to \R)$, so we cannot immediately define standard notions of derivative such as the Fr\'echet derivative.  While one could place a somewhat artificial topology on this space, we shall avoid this problem by working with a quite restricted notion of derivative.  We say that a map $t \mapsto F[t]$ from a time interval $I$ to ${\mathcal L}^\infty(\vec \Omega^{\vec p} \to \R)$ is \emph{continuously differentiable with derivative} $t \mapsto F'[t]$ if $F$ has a representation of the form
\begin{equation}\label{F-decomp}
 F[t] = \sum_{\alpha \in A} g_\alpha(t) \prod_{i \in [n_\alpha]} \Sigma_{\vec p}( f_{\alpha,i}[t] )
\end{equation}
with $A$ a finite set of labels, and for each $\alpha \in A$, $g_\alpha: I \to \R$ is a continuously differentiable function, $n_\alpha$ is a natural number, and $f_{\alpha,i}: I \to L^\infty(\Disjoint \vec \Omega \to \R)$ is a continuously differentiable function (in the Fr\'echet sense), and $F'$ is given by the Leibniz rule
\begin{equation}\label{leib}
\begin{split}
F'[t] &= \sum_{\alpha \in A} g'_\alpha(t) \prod_{i \in [n_\alpha]} \Sigma_{\vec p}( f_{\alpha,i}[t] )\\
&\quad + \sum_{\alpha \in A} g_\alpha(t) \sum_{i_0 \in [n_\alpha]} \Sigma_{\vec p}( f'_{\alpha,i_0}[t] ) \prod_{i \in [n_\alpha] \backslash \{i_0\}} \Sigma_{\vec p}( f_{\alpha,i}[t] )
\end{split}
\end{equation}
with $g'_\alpha, f'_{\alpha,i_0}$ denoting the derivatives of $g_\alpha, f_{\alpha,i_0}$.  Strictly speaking, we have not ruled out the possibility that the derivative $F'$ is non-unique, due to the fact that $F$ may have multiple representations of the form \eqref{F-decomp}; it is likely that uniqueness does hold, but we do not attempt to establish it here, as it is not needed for our arguments.  We have the usual Leibniz rule: if $F[t], G[t]$ are continuously differentiable in the above sense with derivatives $F'[t], G'[t]$ respectively, then $F[t] G[t]$ is also continuously differentiable with derivative $F'[t] G[t] + F[t] G'[t]$.  It is also clear that the derivative $F'[t]$ depends linearly on $F[t]$.

\begin{proposition}[Differentiation under the integral sign, II]\label{diff-ii}  Let the notation and hypotheses be as in Proposition \ref{diff-i}.  Suppose that $t \mapsto F[t]$ is a continuously differentiable map from $I$ to ${\mathcal L}^\infty(\vec \Omega \to \R)$ with derivative $t \mapsto F'[t]$.  Then one has
\begin{equation}\label{cater-2}
 \partial_t \int_{\vec \Omega^{\vec p}} F[t]\ d\vec \mu_{w[t]}^{\vec p} = \int_{\vec \Omega^{\vec p}} (F[t] \Sigma_{\vec p}(c[t]) + F'[t])\ d\vec \mu_{w[t]}^{\vec p}
\end{equation}
for all $t \in I$.
\end{proposition}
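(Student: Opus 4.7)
\emph{Proof plan.} My approach is to reduce to a single monomial in the $\Sigma_{\vec p}$-factors, split the difference quotient into a ``measure-only'' part (handled by Proposition \ref{diff-i}) and an ``integrand-only'' part (handled by continuity plus Lemma \ref{holder}), and recombine.

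Using the representation \eqref{F-decomp} and the linearity of both sides of \eqref{cater-2} in $F$, together with the ordinary Leibniz rule for the scalar factors $g_\alpha(t)$, I reduce to the case
$$ F[t] = \prod_{i \in [n]} \Sigma_{\vec p}(f_i[t]), $$
so that, per \eqref{leib}, $F'[t] = \sum_{i_0 \in [n]} \Sigma_{\vec p}(f'_{i_0}[t]) \prod_{i \neq i_0} \Sigma_{\vec p}(f_i[t])$. Now fix $t_0 \in I$ and small $h$ with $t_0+h \in I$, and split
$$ \int_{\vec\Omega^{\vec p}} F[t_0+h]\, d\vec\mu_{w[t_0+h]}^{\vec p} - \int_{\vec\Omega^{\vec p}} F[t_0]\, d\vec\mu_{w[t_0]}^{\vec p} = X + Y, $$
where
$$ X \coloneqq \int_{\vec\Omega^{\vec p}} F[t_0]\, d\vec\mu_{w[t_0+h]}^{\vec p} - \int_{\vec\Omega^{\vec p}} F[t_0]\, d\vec\mu_{w[t_0]}^{\vec p} $$
involves only the change in the measure, and
$$ Y \coloneqq \int_{\vec\Omega^{\vec p}} \bigl(F[t_0+h] - F[t_0]\bigr)\, d\vec\mu_{w[t_0+h]}^{\vec p} $$
involves only the change in the integrand. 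Proposition \ref{diff-i}, applied to the (time-independent) virtual function $F[t_0]$, immediately gives $X = h \int F[t_0]\, \Sigma_{\vec p}(c[t_0])\, d\vec\mu_{w[t_0]}^{\vec p} + o(h)$.

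For $Y$, Fr\'echet-differentiability gives $f_i[t_0+h] = f_i[t_0] + h f'_i[t_0] + \eta_i[h]$ with $\|\eta_i[h]\|_{L^\infty(\Disjoint\vec\Omega)} = o(h)$. Expanding the product multilinearly in the algebra ${\mathcal L}^\infty(\vec\Omega^{\vec p} \to \R)$ and collecting yields
$$ F[t_0+h] = F[t_0] + h F'[t_0] + R[h], $$
where $R[h]$ is a finite sum of virtual functions of the form $\prod_i \Sigma_{\vec p}(g_i[h])$ multiplied by a scalar coefficient that is either $O(h^2)$ or contains a factor $\|\eta_{i_0}[h]\|_{L^\infty} = o(h)$, with each $g_i[h]$ bounded in $L^\infty$ uniformly for small $h$. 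Lemma \ref{holder}, together with the uniform boundedness of the total variation norms $\|(\mu_j)_{w_j[t_0+h]}\|_{\TV}$ for small $h$ (which follows from \eqref{wat} and the $L^\infty$-continuity of $c[t]$), then gives $\int R[h]\, d\vec\mu_{w[t_0+h]}^{\vec p} = o(h)$. Corollary \ref{continuity} applied to the main term yields $\int F'[t_0]\, d\vec\mu_{w[t_0+h]}^{\vec p} = \int F'[t_0]\, d\vec\mu_{w[t_0]}^{\vec p} + o(1)$, so that $Y = h \int F'[t_0]\, d\vec\mu_{w[t_0]}^{\vec p} + o(h)$. Adding $X$ and $Y$ and dividing by $h$ gives \eqref{cater-2}.

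The principal delicacy is purely bookkeeping: one must verify that the higher-order remainder $R[h]$, a finite but not a priori uniformly-structured sum of virtual functions, integrates to $o(h)$ rather than merely $O(h)$. The splitting into $X$ and $Y$ is designed precisely to sidestep this difficulty --- the measure change is absorbed into Proposition \ref{diff-i} applied to a fixed integrand, while the integrand change is estimated against a nearby (hence uniformly bounded in total variation) measure using only $L^\infty$ control of the $\eta_i[h]$ via Lemma \ref{holder}.
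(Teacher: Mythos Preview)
Your proof is correct and follows essentially the same approach as the paper's: reduce to a monomial, use Proposition~\ref{diff-i} for the measure variation, Taylor-expand the integrand and control the remainder via Lemma~\ref{holder} against the nearby weighted measure, and move the $F'[t_0]$ term between the two measures using total-variation continuity. The only cosmetic difference is the order of operations: the paper first applies Proposition~\ref{diff-i} and Lemma~\ref{hold} to reduce to showing $\int (F[t+h]-F[t]-hF'[t])\,d\vec\mu_{w[t+h]}^{\vec p}=o(|h|)$, and only then passes to the monomial case, whereas you split into $X+Y$ at the outset; both arrangements are equivalent.
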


\begin{proof}  Fix $t \in I$, and let $h$ be a sufficiently small real number.  From Proposition \ref{diff-i} (with $F$ replaced by $F[t]$) one already has
$$ \int_{\vec \Omega^{\vec p}} F[t]\ d\vec \mu_{w[t+h]}^{\vec p} = \int_{\vec \Omega^{\vec p}} F[t]\ d\vec \mu_{w[t]}^{\vec p}
+ h \int_{\vec \Omega^{\vec p}} F[t] \Sigma_{\vec p}(c[t])\ d\vec \mu_{w[t]}^{\vec p} + o(|h|)$$
while from Lemma \ref{hold} one has
$$ \int_{\vec \Omega^{\vec p}} hF'[t]\ d\vec \mu_{w[t]}^{\vec p} = \int_{\vec \Omega^{\vec p}} hF'[t]\ d\vec \mu_{w[t+h]}^{\vec p} + o(|h|)$$ 
so by the triangle inequality it suffices to show that
$$ \int_{\vec \Omega^{\vec p}} (F[t+h] - F[t] - hF'[t])\ d\vec \mu_{w[t+h]}^{\vec p} = o(|h|).$$
Using the decomposition \eqref{F-decomp} associated to $F[t]$ and its derivative $F'[t]$, we may assume without loss of generality that $F[t]$ takes the form
$$ F[t] = g(t) \prod_{i \in [n]} \Sigma_{\vec p}( f_i[t] )$$
for some continuously differentiable $g: I \to \R$ and $f_i: I \to L^\infty(\Disjoint \vec \Omega \to \R)$, and that $F'[t]$ similarly takes the form
$$ F'[t] = g'(t) \prod_{i \in [n]} \Sigma_{\vec p}( f_i[t] ) + g(t) \sum_{i_0 \in [n]} \Sigma_{\vec p}(f'_{i_0}[t]) \prod_{i \in [n] \backslash \{i_0\}} \Sigma_{\vec p}(f_i[t]).$$
By Taylor expansion we have $g(t+h) = g(t) + h g'(t) + o(|h|)$, and similarly $f_{i_0}[t+h] = f_{i_0}[t] + h f'_{i_0}[t] + e_{i_0,h}[t]$ where $e_{i_0,h}[t]$ has an $L^\infty$ norm of $o(|h|)$.  Applying these expansions, we eventually conclude that the expression $F[t+h] - F[t] - h F'[t]$ is the finite linear combination (uniformly in $h$) of expressions of the form
$$ g_h(t) \prod_{i \in [n]} \Sigma_{\vec p}(f_{i,h}[t])$$
where all of the quantities $g_h(t), f_{i,h}[t]$ have magnitude bounded uniformly in $h$, and at least one of the quantities of size $o(|h|)$.  Also, from \eqref{tv} we see that $\|\vec \mu_{w[t+h]}\|_{\TV}$ is bounded uniformly in $h$.  The claim now follows from Lemma \ref{hold}.
\end{proof}

\section{Proof of curved multilinear Kakeya estimate}\label{curv-sec}

In this section we prove Theorem \ref{curv-kak}.

We begin with some reductions.  We may normalise
$$ \|\mu_j \|_{\TV} = 1$$
for all $j \in [d]$.  We then have the trivial bound
\begin{equation}\label{dop2}
\int_{\Omega_j} 1_{B^{d-1}(0,t)}( \phi_j( x, \omega_j) )\ d\mu_j(\omega_j) \leq 1
\end{equation}
From this we see that \eqref{dop} trivially holds at $p=\infty$.  Thus by interpolation it suffices to verify \eqref{dop} for bounded choices of $p$, for instance when $\frac{1}{d-1} < p < 2$.  Henceforth we restrict $p$ to this range.

When the maps $\phi_j\colon V \times \Omega_j \to \R^{d-1}$ take the form
\begin{equation}\label{piy} 
\phi_j(x,\omega_j) = x \pi_j - y_j(\omega_j)
\end{equation}
for an arbitrary measurable map $y_j\colon \Omega_j \to \R^{d-1}$, with $\pi_j \in \R^{d \times d-1}$ the $d \times d-1$ matrix formed by deleting the $j^{\mathrm{th}}$ column from the $d \times d$ identity matrix $I_d$, then the required inequality follows easily from the Loomis-Whitney inequality \cite{loomis} (which also holds at the endpoint $p=\frac{1}{d-1}$).  We now reduce to a ``perturbed Loomis-Whitney case'' in which the $\phi_i$ locally behave like the maps \eqref{piy}.

We set a small parameter
\begin{equation}\label{construct}
\eps := C_0^{-1} A^{-C_0} \left(d-1-\frac{1}{p}\right)^{C_0} 
\end{equation}
for some large constant $C_0$ (depending only on $d$) to be chosen later.  It will now suffice to show that
$$ \left\| \prod_{j \in [d]} \int_{\Omega_j} 1_{B^{d-1}(0,t)}( \phi_j( x, \omega_j) )\ d\mu_j(\omega_j) \right\|_{L^p(U_{1/A})} \lesssim A^{O(1)} \eps^{-O(1)} t^{\frac{d}{p}},$$
where the $L^p$ norm is with respect to the $x$ variable.
(One could absorb the $A^{O(1)}$ factor here into the $\eps^{-O(1)}$ factor if desired.)  We may assume that $t \leq \eps^2$ (say), since the claim follows from the trivial bound \eqref{dop} otherwise.

By covering $V_{1/A}$ by $O(\eps^{-O(1)})$ balls of radius $\eps$ (using a maximal $\eps$-separated net of $V_{1/A}$), it suffices to establish the estimate
$$ \left\| \prod_{j \in [d]} \int_{\Omega_j} 1_{B^{d-1}(0,t)}( \phi_j( x, \omega_j) )\ d\mu_j(\omega_j) \right\|_{L^p(B^d(x_0,\eps))} \lesssim A^{O(1)} \eps^{-O(1)} t^{\frac{d}{p}}$$
for any ball $B^d(x_0,\eps)$ with $x_0 \in V$.

For any $\omega_j \in \Omega_j$, the derivative map $\nabla_x^T \phi_j(x_0,\omega_j) \in \R^{d \times d-1}$ has norm $O(A)$.  By partitioning $\Omega_j$ into $O(\eps^{-O(1)})$ regions depending on the value of this map up to errors of size $\eps$ and using the triangle inequality, we may assume without loss of generality that there exist matrices $B^0_j \in \R^{d \times d-1}$ of norm $O(A)$ such that
$$ \nabla_x^T \phi_j(x_0,\omega_j) = B^0_j + O( \eps ) $$
for all $\omega_j \in \Omega_j$, which by \eqref{c2} implies that
\begin{equation}\label{no}
 \nabla_x^T \phi_j(x,\omega_j) = B^0_j + O( A \eps ) 
\end{equation}
for all $\omega_j \in \Omega_j$ and $x \in B(0,\eps)$.  From Theorem \ref{curv-kak}(ii) we conclude that $B^0_j$ is of full rank, with all $d-1$ non-trivial singular values of the form $A^{O(1)}$.

Let $n^0_j \in \R^d$ be a unit vector in the left kernel of $B^0_j$, thus $n^0_j B^0_j = 0$.  From \eqref{no} and the inverse function theorem, we see that for any $\omega_j$, the left kernel of $\nabla_x^T \phi_j(x,\omega_j)$ contains a unit vector in $\R^d$ of the form $n^0_j + O( A^{O(1)} \eps )$.  From  
\eqref{transverse} we then conclude (for $C_0$ large enough) that
$$ \left| \bigwedge_{j \in [d]} n^0_j \right| \gtrsim A^{-O(1)}.$$
In particular, by Cramer's rule, we can find an invertible matrix $T \in \R^{d \times d}$ with singular values $A^{O(1)}$ such that $n_0^j T = e_j$ for all $j \in [d]$.  Applying this transformation (and adjusting $C_0, A, t$ as necessary), replacing $\phi_j$ with the map $(x,\omega_j) \mapsto \phi_j( x T^{-1}, \omega_j)$ and $U$ with $UT$, we may assume without loss of generality that $n^0_j = e_j$ for all $j$.  Thus $B^0_j$ can be factored as $B^0_j = \pi_j \tilde B^0_j$ for some matrix $\tilde B^0_j \in \R^{d-1 \times d-1}$, where $\pi_j \in \R^{d \times d-1}$ is the matrix from \eqref{piy}.  Since $B^0_j$ is of full rank with all non-trivial singular values $A^{O(1)}$, we conclude from \eqref{c2} that $\tilde B^0_j$ is invertible, with all singular values $A^{O(1)}$.  If we then replace each $\phi_j$ with the map $x \mapsto \phi_j(x) (\tilde B^0_j)^{-1}$ (and adjust $C_0$ and $A$ as necessary), we see that we may assume without loss of generality that $\tilde B^0_i$ is the identity, thus by \eqref{no} we now have
\begin{equation}\label{nib}
 \nabla_x^T \phi_j(x,\omega_j) = \pi_j + O( A^{O(1)} \eps )
\end{equation}
(compare with \eqref{piy}).

For any exponent $\alpha \in \R$, let $P(\alpha)$ denote the claim that the estimate
$$ \left\| \prod_{j \in [d]} \int_{\Omega_j} \gamma_t( \phi_j(x,\omega_j) ) \ d\mu_j(\omega_j) \right\|_{L^p(B^d(x_0,\eps))} \lesssim A^{O(1)} \eps^{-O(1)} t^\alpha $$
holds under the above assumptions, where the gaussian weight $\gamma_t$ is defined in \eqref{gammat} or in Section \ref{notation-sec}.  We trivially have $P(\alpha)$ for $\alpha \leq 0$, and thanks to the pointwise bound
$$ 1_{B^{d-1}(0,t)} \lesssim \gamma_t$$
it will suffice to show that $P(d/p)$ holds.  We will show the implication\footnote{There is considerable freedom in the choice of constant $0.1$ here.  This freedom hints that it may be possible to relax the $C^2$ regularity hypothesis in Theorem \ref{curv-kak}(i) to $C^{1,\alpha}$ regularity, although our current arguments do rely on the $C^2$ hypothesis quite heavily when estimating the error terms in Lemma \ref{lowerl}.  It may be possible however to rearrange the argument to reduce the amount of regularity required, perhaps by estimating finite differences of $Q(t)$ rather than first derivatives. We will not pursue this question further here.}
$$ P(\alpha-0.1) \implies P(\alpha)$$
for all $\alpha \leq d/p$, which will give the claim $P(d/p)$ after finitely many iterations of this implication.

Now let $\alpha \leq d/p$ be such that $P(\alpha-0.1)$ holds.  It suffices to show that
\begin{equation}\label{sod}
t^{-\alpha p} \int_{\R^d} \eta_{x_0,\eps}(x) \prod_{j \in [d]} \left(\int_{\Omega_j} \gamma_t(\phi_j(x,\omega_j)) \ d\mu_j(\omega_j) \right)^p
\lesssim \eps^{-O(1)},
\end{equation}
where the cutoff $\eta_{x_0,\eps}$ was defined in Section \ref{notation-sec}.

We now use the virtual integration formalism from Section \ref{fractional}.  Write
$$ \vec \Omega \coloneqq (\Omega_1,\dots,\Omega_d)$$
and
$$\vec \mu \coloneqq (\mu_1,\dots,\mu_d)$$
and
$$ \vec p \coloneqq (p,\dots,p).$$
For each $x \in V$, let $\phi[x]\colon \Disjoint \vec \Omega \to \R^{d-1}$ be the function
$$ \phi[x](j, \omega_j) \coloneqq \phi_j(x, \omega_j)$$
and for any $0 < t \leq 1$, let $w[t,x]\colon \Disjoint \vec \Omega \to \R$ be the non-negative weight
\begin{equation}\label{wit}
 w[t,x] \coloneqq \gamma_t(\phi[x]).
\end{equation}
The left-hand side of \eqref{sod} can now be written using Lemma \ref{basic} (extended to fractional $p$) as
$$ t^{-\alpha p} \int_{\R^d} \eta_{x_0,\eps}(x) \vec \mu_{w[t,x]}(\vec \Omega)^{\vec p}\ dx.$$
To bound this expression, we will perturb it slightly to one that has better monotonicity properties.  For any $x \in V$, the expression
\begin{equation}\label{bx-def}
 B[x] \coloneqq \nabla_x^T \phi[x]
\end{equation}
is an element of $L^\infty(\Disjoint \vec \Omega \to \R^{d \times d-1})$.  Therefore if we define
\begin{equation}\label{M-def}
 M[x] \coloneqq \Sigma_{\vec p}( B[x]^T B[x] )
\end{equation}
then $M[x] \in {\mathcal L}^\infty(\vec \Omega^{\vec p} \to \R^{d \times d})$ is a virtual matrix-valued function, and its determinant
$$ \det M[x] \in {\mathcal L}^\infty(\vec \Omega^{\vec p} \to \R) $$
is a virtual scalar function.  Following \cite{bct}, we now introduce the quantity
$$ Q(t) \coloneqq t^{-\alpha p} \int_{\R^d} \int_{\vec \Omega^{\vec p}} \eta_{x_0,\eps}(x) (\det M[x])\ d\vec \mu_{w[t,x]}^{\vec p}\ dx$$
Applying Theorem \ref{non-neg}(i) (with $B_j^0 = \pi_j$, so that $M^0 = p(d-1) I_d$) we conclude that
\begin{equation}\label{qt}
 Q(t) = ((p(d-1))^d + O(A \eps)) t^{-\alpha p} \int_{\R^d} \eta_{x_0,\eps}(x) \vec \mu_{w[t,x]}(\vec \Omega)^{\vec p}\ dx.
\end{equation}
By the construction \eqref{construct} of $\eps$, we have
$$(p(d-1))^d + O(A \eps) \gtrsim \eps^{O(1)}.$$
Thus, to establish the claim $P(\alpha)$, it will suffice to show that
\begin{equation}\label{qto}
 Q(t) \lesssim A^{O(1)} \eps^{-O(1)}
\end{equation}
for all $0 < t \leq 1$.

From \eqref{qt}, \eqref{dop2}, we already have established \eqref{qto} at the endpoint $t=\eps^2$.  Thus by the fundamental theorem of calculus, it will suffice to establish the monotonicity formula
\begin{equation}\label{mono}
 t \partial_t Q(t) \geq - O( A^{O(1)} \eps^{-O(1)} t^{1-0.1 p} )
\end{equation}
whenever $0 < t < \eps^2$, since we are in the regime $p<2$ which ensures that $t^{-0.1 p}$ is integrable.

We use the abbreviation
$$ \Expect_t( G ) \coloneqq t^{-\alpha p} \int_{\R^d} \int_{\Omega^{\vec p}} G[t,x] \ d\vec \mu_{w[t,x]}^{\vec p} dx $$
whenever $G = G[t,x]$ is a virtual function in ${\mathcal L}^\infty( \Omega^{\vec p} \to \R )$, thus for instance
$$ Q(t) = \Expect_t( \eta_{x_0,\eps} \det M ).$$
We have
$$ t \partial_t w[t,x] = \frac{2}{t^2} \phi[x] \phi[x]^T w[t,x].$$
Applying Proposition \ref{diff-i}, we conclude that
$$ t \partial_t \int_{\Omega^{\vec p}} \eta_{x_0,\eps} \det M \ d\vec \mu_{w[t,x]}^{\vec p} 
= \int_{\Omega^{\vec p}} \frac{2}{t^2} \eta_{x_0,\eps} \det M \Sigma_{\vec p}(\phi[x] \phi[x]^T) \ d\vec \mu_{w[t,x]}^{\vec p} $$
for each $x \in \R^d$.  From Lemma \ref{continuity}, the right-hand side is continuous in both $t$ and $x$.  Applying the fundamental theorem of calculus to convert this differential identity into an integral one, then applying Fubini's theorem (exploiting the compactly supported nature of $\eta_{x_0,\eps}$), then applying the fundamental theorem of calculus once again, one can differentiate under the integral sign and conclude that
\begin{equation}\label{tqt}
 t \partial_t Q(t) = \Expect_t\left( - \alpha p \eta_{x_0,\eps} \det M + \frac{2}{t^2} \eta_{x_0,\eps} \det(M) \Sigma_{\vec p}(\phi \phi^T) \right).
\end{equation}
To manage the $- \alpha p \eta_{x_0} \det M$, term we shall perform a somewhat contrived-looking integration by parts, designed in order to be able to exploit the positivity in Theorem \ref{non-neg}(ii).  We introduce the  virtual column vector-valued function $F = F[x] \in {\mathcal L}^\infty(\Omega^{\vec p} \to \R^{1 \times d})$ defined by the formula
\begin{equation}\label{F-def}
 F \coloneqq \Sigma_{\vec p}( B[x] \phi[x]^T ),
\end{equation}
and consider how the expression
$$
W(t,x) \coloneqq \int_{\Omega^{\vec p}} \eta_{x_0,\eps}(x) \adj(M[x]) F[x]\ d\vec \mu_{w[t,x]}^{\vec p}$$
depends on $x$.  It is not difficult to show that the map $x \mapsto \eta_{x_0,\eps}(x) \adj(M[x]) F[x]$ is continuously differentiable in each coordinate $x_j$ in the sense of Section \ref{diff-sec}; also, from \eqref{wit}, \eqref{gammat}, \eqref{bx-def} we have
$$ \nabla_x^T w[t,x] = -\frac{2}{t^2} B[x] \phi[x]^T w[t,x],$$
and hence by Proposition \ref{diff-ii} and \eqref{F-def} we have
$$ \nabla_x W(t,x)
= \int_{\Omega^{\vec p}} \left(\nabla_x (\eta_{x_0,\eps}(x) \adj(M[x]) F[x]) - \frac{2}{t^2} F[x]^T \eta_{x_0,\eps}(x) \adj(M[x]) F[x]\right) \ d\vec \mu_{w[t,x]}^{\vec p}.$$
From Lemma \ref{continuity}, the right-hand side can be shown to be continuous in $x$.  Thus $W(t,x)$ is continuously differentiable in $x$; as it is also compactly supported, we have
\begin{equation}\label{tash}
 t^{-\alpha p} \int_{\R^d} \nabla_x W(t,x)\ dx = 0
\end{equation}
and thus
$$\Expect_t( \nabla_x (\eta_{x_0,\eps} \adj(M) F) - \frac{2}{t^2} \eta_{x_0,\eps} F^T \adj(M) F ) = 0.$$
We can therefore write \eqref{tqt} as
$$ \Expect_t\left( \nabla_x (\eta_{x_0,\eps} \adj(M) F) - \alpha p \eta_{x_0,\eps} \det M + \frac{2}{t^2} \eta_{x_0,\eps} S_0\right)$$
where $S_0 = S_0[t,x]$ is the virtual function
$$
S_0 \coloneqq \det(M) \Sigma_{\vec p}(\phi \phi^T) - F^T \adj(M) F.
$$
From the Leibniz rule (writing $\nabla_x = \sum_{j \in [d]} e_j \partial_{x_j}$) we have
\begin{align*}
\nabla_x (\eta_{x_0,\eps} \adj(M) F) &= \nabla_x (\eta_{x_0,\eps} \adj(M)) F \\
&\quad + \sum_{j \in [d]} e_j \eta_{x_0,\eps} \adj(M) \Sigma_{\vec p}( (\partial_{x_j} B) \phi^T) \\
&\quad + \mathrm{tr}\left( \eta_{x_0,\eps} \adj(M) \Sigma_{\vec p}\left( B B^T \right)\right).
\end{align*}
From \eqref{M-def} and the identity $\adj(M) M = \det(M) I_d$ we have
$$\mathrm{tr}\left( \eta_{x_0,\eps} \adj(M) \Sigma_{\vec p}\left(BB^T\right) \right) = \mathrm{tr}( \eta_{x_0,\eps} \det(M) I_d) = d \eta_{x_0,\eps} \det(M).$$
We can thus write
$$ t \partial_t Q(t) = \Expect_t\left( \frac{2}{t^2} \eta_{x_0,\eps} S_0 + S_1 + S_2 \right) + (d-\alpha p) Q(t)$$
where $S_i = S_i[t,x]$, $i=1,2$ are the virtual functions
\begin{align*}
S_1 &\coloneqq \nabla_x (\eta_{x_0,\eps} \adj(M)) F  \\
S_2 &\coloneqq \sum_{j \in [d]} e_j \eta_{x_0,\eps} \adj(M) \Sigma_{\vec p}((\partial_{x_j} B) \phi^T).
\end{align*}

We can use the $P(\alpha-0.1)$ hypothesis to control the lower order terms $S_1, S_2$:

\begin{lemma}\label{lowerl}  We have $\Expect_t(S_i)\lesssim \eps^{-O(1)} t^{1-0.1 p}$ for $i=1,2$.
\end{lemma}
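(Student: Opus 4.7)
The plan is to exploit that each of $S_1$ and $S_2$ carries an explicit factor of $\phi$ — in $S_1$ through $F = \Sigma_{\vec p}(B \phi^T)$, and in $S_2$ through $\Sigma_{\vec p}((\partial_{x_j} B) \phi^T)$. On the support of the Gaussian weight $w[t,x] = \gamma_t(\phi)$ the field $\phi$ is concentrated at scale $t$, so we will trade $|\phi_j|$ for a factor of $t$ at the cost of a harmless widening of the Gaussian scale; the remaining quantity is then absorbed by the inductive hypothesis $P(\alpha-0.1)$, whose extra $0.1$ of decay supplies the $t^{1-0.1p}$ in place of a bare $t$.

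First I would derive a pointwise (in $x$) bound on the virtual integrals. Expanding by the Leibniz rule,
\[
\nabla_x(\eta_{x_0,\eps} \adj(M)) = (\nabla_x \eta_{x_0,\eps}) \adj(M) + \eta_{x_0,\eps} \nabla_x \adj(M),
\]
where $|\nabla_x \eta_{x_0,\eps}| \lesssim \eps^{-1}$, and $\nabla_x \adj(M)$ is a polynomial (of bounded degree in $d$) in the $\Sigma_{\vec p}$'s of the entries of $B^T B$ and $\partial_{x_k}(B^T B)$, each of which has $L^\infty(\Disjoint \vec \Omega)$ norm $O(A)$ by \eqref{nib} and the $C^2$ hypothesis \eqref{c2}. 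Applying Lemma \ref{holder} with H\"older exponent $\infty$ on every factor except the one carrying $\phi$, together with the trivial pointwise estimate $|B_j||\phi_j|, |\partial_{x_k} B_j||\phi_j| \lesssim A|\phi_j|$, yields the uniform-in-$x$ bound
\[
\left|\int_{\vec \Omega^{\vec p}} S_i[t,x]\, d\vec \mu_{w[t,x]}^{\vec p}\right| \lesssim \eps^{-O(1)} \sum_{j \in [d]} \E_{(\mu_j)_{w_j[t,x]}} |\phi_j(x,\cdot)| \cdot \vec \mu_{w[t,x]}(\vec \Omega)^{\vec p}
\]
for $i = 1, 2$.

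Next I would convert the $|\phi_j|$ factor into a power of $t$ using the elementary Gaussian identity $|y|^k \gamma_t(y) \lesssim_k t^k \gamma_{\sqrt 2 t}(y)$, valid for every $k \geq 1$. Choose $k = \lceil 1/p \rceil$, which is bounded by $d$ because $p > 1/(d-1)$; H\"older's inequality in $\mu_j$ then gives
\[
\int_{\Omega_j} |\phi_j| \gamma_t(\phi_j)\, d\mu_j \leq \left(\int_{\Omega_j} |\phi_j|^k \gamma_t(\phi_j)\, d\mu_j\right)^{1/k}\left(\int_{\Omega_j} \gamma_t(\phi_j)\, d\mu_j\right)^{1-1/k} \lesssim t \left(\int_{\Omega_j} \gamma_{\sqrt 2 t}(\phi_j)\,d\mu_j\right)^{1/k}\left(\int_{\Omega_j} \gamma_t(\phi_j)\,d\mu_j\right)^{1-1/k}.
\]
Dividing by $\int \gamma_t(\phi_j)\,d\mu_j$ and multiplying by $\vec \mu_{w[t,x]}(\vec \Omega)^{\vec p} = \prod_i (\int \gamma_t(\phi_i)\,d\mu_i)^p$, the condition $p \geq 1/k$ together with the pointwise inequality $\gamma_t \leq \gamma_{\sqrt 2 t}$ allows each remaining $\int \gamma_t$ factor to be inflated to $\int \gamma_{\sqrt 2 t}$, producing
\[
\E_{(\mu_j)_{w_j[t,x]}} |\phi_j| \cdot \vec \mu_{w[t,x]}(\vec \Omega)^{\vec p} \lesssim t \prod_{i \in [d]}\left(\int_{\Omega_i} \gamma_{\sqrt 2 t}(\phi_i(x,\cdot))\,d\mu_i\right)^p.
\]

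Finally, integrating in $x$ against $\eta_{x_0,\eps}$ and invoking $P(\alpha - 0.1)$ at the scale $\sqrt 2 t$ (which lies in the admissible range up to a constant, or is trivially handled when $\alpha - 0.1 \leq 0$) yields
\[
\int_{\R^d} \eta_{x_0,\eps}(x) \prod_{i \in [d]}\left(\int_{\Omega_i} \gamma_{\sqrt 2 t}(\phi_i)\, d\mu_i\right)^p dx \lesssim \eps^{-O(1)} t^{(\alpha - 0.1)p},
\]
and multiplying by the prefactors $t^{-\alpha p}$ (from the definition of $\Expect_t$) and $t$ (from the $|\phi_j|$ trade) gives $\Expect_t(S_i) \lesssim \eps^{-O(1)} t^{1 - 0.1 p}$ as required. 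The main technical obstacle is the bookkeeping in the first step: one must verify cleanly that every factor appearing in $\nabla_x(\eta_{x_0,\eps}\adj(M)) F$ and in $S_2$ is a polynomial combination of $L^\infty$-bounded $\Sigma_{\vec p}$'s — this relies on $\adj(M)$ being a polynomial of degree $d-1$ in the entries of $M = \Sigma_{\vec p}(B^T B)$, combined with the $C^2$ regularity hypothesis \eqref{c2} — so that Lemma \ref{holder} applies with constants $\eps^{-O(1)}$ uniformly over the range $\tfrac{1}{d-1} < p < 2$.
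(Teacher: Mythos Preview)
Your proof is correct and follows essentially the same strategy as the paper: expand $S_i$ as a bounded-degree polynomial in $\Sigma_{\vec p}$'s of $L^\infty$-bounded functions times one factor carrying $\phi$, invoke Lemma \ref{holder}, trade $|\phi_j|$ against the Gaussian for a factor of $t$ at the cost of widening the scale, and then apply the inductive hypothesis $P(\alpha-0.1)$ at the widened scale. The only cosmetic difference is in how the $|\phi_j|\to t$ trade is executed: the paper uses the pointwise bound $|\phi_j|\, w_j[t,x] \lesssim t\, w_j[ct,x]^{\min(p,1)} w_j[t,x]^{1-\min(p,1)}$ directly (and then upgrades the exponent $\min(p,1)$ to $p$ via monotonicity of the Gaussian in the scale), whereas you first apply H\"older with integer exponent $k=\lceil 1/p\rceil$ and then use $|y|^k\gamma_t(y)\lesssim t^k\gamma_{\sqrt 2 t}(y)$; both routes yield the same intermediate estimate $\E_{(\mu_j)_{w_j[t,x]}}|\phi_j|\cdot \vec\mu_{w[t,x]}(\vec\Omega)^{\vec p}\lesssim t\,\vec\mu_{w[ct,x]}(\vec\Omega)^{\vec p}$ for some $c>1$.
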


\begin{proof} By the product rule and \eqref{F-def}, $S_i$ is a linear combination (with coefficients $O(1)$) of $O(1)$ terms of the form
$$ \Sigma_{\vec p}(f_1) \dots \Sigma_{\vec p}(f_n) \Sigma_{\vec p}(F_{n+1} \phi^{l'})$$
where $n = O(1)$, $l' \in [d-1]$, and $f_1,\dots,f_{n+1}$ are equal to either $\eta_{x_0,\eps}$, $\partial_{x_j} \eta_{x_0,\eps}$, $\partial_{x_j} \phi^l$, or $\partial_{x_j} \partial_{x_{j'}} \phi^l$ for some $j,j' \in [d]$, $l \in [d-1]$, where $\phi^1,\dots,\phi^{d-1}$ are the components of $\phi$.  All of the functions $f_1,\dots,f_{n+1}$ are bounded uniformly by $O(\eps^{-O(1)})$ for $x$ in the support $B^{d-1}(x_0,2\eps)$ of $\eta_{x_0,\eps}$.  By Lemma \ref{holder}, we thus have
$$ \Expect_t(S_i)  \lesssim \eps^{-O(1)} t^{-\alpha p}
\int_{B^d(x_0,2\eps)} \sum_{j \in [d]} \E_{(\mu_j)_{w_j[t,x]}} |\phi_j[x]|\vec \mu_{w[t,x]}(\vec \Omega)^{\vec p}\ dx.$$
From the pointwise estimate
$$ \phi_j[x] w_j[t,x] \lesssim t w_j[t/2,x]^{\min(p,1)} w_j[t,x]^{1-\min(p,1)}$$
and H\"older's inequality, we have
$$ \E_{(\mu_j)_{w_j[t,x]}} |\phi_j[x]| \lesssim t ( \| (\mu_j)_{w_j[t/2,x]} \|_{\TV} / \|(\mu_j)_{w_j[t,x]}\|_{\TV} )^{\min(p,1)};$$
since we also have $w_j[t/2,x] \geq w_j[t,x]$, we conclude that
$$ \E_{(\mu_j)_{w_j[t,x]}} |\phi_j[x]| \lesssim t ( \| (\mu_j)_{w_j[t/2,x]} \|_{\TV} / \|(\mu_j)_{w_j[t,x]}\|_{\TV} )^{p}.$$
We therefore have
$$ \Expect_t(S_i) \lesssim \eps^{-O(1)} t^{1-\alpha p}
\int_{B^d(x_0,2\eps)} \vec \mu_{w[t/2,x]}(\vec \Omega)^{\vec p}\ dx.$$
Using the induction hypothesis $P(\alpha-1)$, we thus have
$$ \Expect_t(S_i) \lesssim \eps^{-O(1)} t^{1-\alpha p} t^{(\alpha-0.1) p}$$
giving the claim.
\end{proof}

Since $(d-\alpha p) Q(t)$ is non-negative, to conclude the proof of \eqref{mono}, it thus suffices to establish the non-negativity
$$ \Expect_t(\eta_{x_0,\eps} S_0)\geq 0.$$
In fact we will establish the stronger pointwise bound
$$
 \int_{\vec \Omega^{\vec p}} S_0[t,x] \ d\vec \mu_{w[t,x]}^{\vec p} \geq 0
$$
for each $0< t \leq \eps^{10}$ and $x \in U$.  But this follows from Theorem \ref{non-neg}(ii) (with $B_j^0 = \pi_j$, so that $M_0 = p(d-1) I_d$), since the positive semi-definite nature of $(1 - CA^C \eps) M_0 - B_j^0 = p(d-1) (1-CA^C \eps) I_d - \pi_j$ follows from the construction of $\eps$.

\section{Proof of multilinear restriction estimate}\label{rest-sec}

We now begin the proof of Theorem \ref{lamrc}, again beginning with some basic reductions.  We let implied constants in our asymptotic notation depend on $d$.  From Plancherel's theorem we have the pointwise bound
\begin{equation}\label{uoi}
 \energy_r[{\mathcal E}_j f_j](x',x_d) \leq \int_{\R^{d-1}} |{\mathcal E}_j f_j(x',x_d)|^2\ dx' = \| f_j \|_{L^2(U_{j,1/A})}^2
\end{equation}
for any $(x',x_d) \in \R^d$, which immediately gives the $p=\infty$ case of Theorem \ref{lamrc}.  Thus by interpolation it suffices to restrict attention to the case of bounded $p$, for instance $\frac{1}{d-1} < p \leq 2$.

As in the preceding section, we set a small parameter
\begin{equation}\label{construct-2}
 \eps := C_0^{-1} A^{-C_0} \left(d-1-\frac{1}{p}\right)^{C_0} 
\end{equation}
for some large constant $C_0$ (depending only on $d$) to be chosen later.   It suffices to show that
\begin{equation}\label{clap}
r^{-d} \left\| \prod_{j \in [d]} \energy_{r}[{\mathcal E}_j f_j] \right\|_{L^{p}(\R^d)}^{p} \lesssim A^{O(1)} \eps^{-O(1)} \prod_{j \in [d]} \| f_j \|_{L^2(U_{j,1/A})}^{2p}.
\end{equation}

By covering $U_{j,1/A}$ by $O(\eps^{-O(1)})$ balls of radius $\eps$, we may assume without loss of generality that each $f_j$ is supported in a ball $B^{d-1}(\xi_j^0, \eps)$ of radius $\eps$, with $\xi_j^0 \in U_{j,1/A}$.  For $1 \leq r \leq R$, let $C(R,r)$ denote the best constant in the estimate
\begin{equation}\label{crat}
r^{- d} \left\| \prod_{j \in [d]} \energy_{r}[{\mathcal E}_j f_j]\right \|_{L^p(B^d(x_0,R))}^{p} \leq C(R,r) \prod_{j \in [d]} \| f_j \|_{L^2(B^{d-1}(\xi_j^0, \eps))}^{2p}
\end{equation}
where $x_0 \in \R$ and $f_j \in L^2(B^{d-1}(\xi_j^0, \eps))$.  This is clearly a finite quantity.  We will introduce a variant $\tilde C(R,r)$ of $C(R,r)$ and verify the following claims:
\begin{itemize}
\item[(i)]  For any $1 \leq r \leq R$, we have $C(R,r) = A^{O(1)} \tilde C(R,r)$.
\item[(ii)]  For any $1 \leq r \leq R$, we have $C(R,r) \lesssim (R/r)^{O(1)}$.
\item[(iii)]  If $\eps^{-2} \leq r_1 \leq r_2 \leq R$ with $r_2 \leq r_1^{1.1}$, we have
\begin{equation}\label{booyah}
 \tilde C(R,r_1) \leq \tilde C(R,r_2) + O\left( A^{O(1)} \delta\sup_{r_1 \leq t \leq r_2} C(R,t)\right)
\end{equation}
where $\delta = \delta_{r_1,r_2}$ is the quantity
\begin{equation}\label{deta}
 \delta \coloneqq ( r_1^{-c} + (r_2/R)^{c} ) \end{equation}
and $c>0$ depends only on $d$.
\end{itemize}
Let us assume (i), (ii), (iii) for now and establish \eqref{clap}.  Combining (iii) with (ii) we have
$$ \tilde C(R,r_1) \leq \tilde C(R,r_2) + O\left( A^{O(1)} \delta \sup_{r_1 \leq t \leq r_2} \tilde C(R,t) \right)$$
whenever $\eps^{-2} \leq r_1 \leq r_2 \leq R$ and $r_2 \leq r_1^{1.1}$.  Replacing $r_1$ by any quantity in $[r_1,r_2]$ (which does not increase $\delta$) and taking suprema, we conclude that
$$ \sup_{r_1 \leq t \leq r_2} \tilde C(R,t) \leq \tilde C(R,r_2) + O\left( A^{O(1)} \delta \sup_{r_1 \leq t \leq r_2} \tilde C(R,t)\right) $$
If in addition we assume that $r_1, R/r_2 \geq \eps^{-C_1}$ for a sufficiently large constant $C_1$ (depending only on $d$), we can rearrange this as
$$ \sup_{r_1 \leq t \leq r_2} \tilde C(R,t)  \leq \left(1 + O( A^{O(1)} \delta)\right ) \tilde C(R,r_2)$$
and in particular
$$ \tilde C(R,r_1)  \leq \exp\left( O( A^{O(1)} \delta) \right) \tilde C(R,r_2)$$
Iterating this starting with some $\eps^{-C_1} \leq r \leq \eps^{C_1} R$ using the sequence $r_1,r_2,\dots$ defined by $r_1 = r$ and $r_{n+1} = \min(r_n^{1.1}, \eps^{C_1} R)$, and summing the geometric series arising from substituting \eqref{deta}, we conclude that
$$ \tilde C(R,r) \lesssim \tilde C(R, \eps^{C_1} R)$$
whenever $\eps^{-C_1} \leq r \leq \eps^{C_1} R$.  Combining this with (i), (ii), we conclude that
$$ C(R,r) \lesssim A^{O(1)} \eps^{-O(C_1)}$$
whenever $\eps^{-C_1} \leq r \leq \eps^{C_1} R$.  Sending $R$ to infinity using monotone convergence, we conclude that \eqref{clap} holds whenever $r \geq \eps^{-C_1}$.  Finally, the remaining cases $1 \leq r \leq \eps^{-C_1}$ follow from the fact (from \eqref{Avg-def}) that $\energy_r[{\mathcal E}_j f_j]$ is monotone non-decreasing in $r$.

It remains to establish claims (i), (ii), (iii).  Claim (ii) is immediate from \eqref{uoi}.  For the other two claims we need to define the quantity $\tilde C(R,r)$.  We first need some matrices adapted to the normal vectors $n_j(\xi)$ associated to the functions ${\mathcal E}_j f_j$ that were defined in \eqref{njxi}:

\begin{lemma}\label{matrix-lem}  There exist matrices $B'_j \in \R^{d-1 \times d-1}$ for $j \in [d]$ with all singular values $A^{O(1)}$ with the following property: if for any $\xi_j \in B( \xi_j^0, 2\eps)$ we define the matrix $B_j(\xi_j) \in \R^{d \times d-1}$ by
\begin{equation}\label{tubx}
B_j(\xi_j) \coloneqq \begin{pmatrix} I_{d-1} & \nabla_\xi h_j(\xi_j)^T \end{pmatrix} B'_j
\end{equation}
(so in particular $n_j(\xi_j)$ is a left null vector of $B_j(\xi_j)$), then the matrix $M_0 \in \R^{d \times d}$ defined by
\begin{equation}\label{modo}
 M_0 \coloneqq \sum_{j \in [d]} p B_j(\xi_j^0) B_j(\xi_j^0)^T
\end{equation}
has all singular values $A^{O(1)}$, and $\frac{1}{p(d-1)} M_0 - B_j(\xi_j^0) B_j(\xi_j^0)^T$ is positive semi-definite for all $j \in [d]$.
\end{lemma}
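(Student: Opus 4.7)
The plan is to choose the $B'_j$ so that the columns of $B_j(\xi_j^0)$ form the dual basis to the normalised normals $\{\hat n_i^0\}_{i \in [d]}$ with the $j$-th entry removed. With this choice, the positive semi-definite condition in the lemma collapses to a manifest rank-one identity.

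First I would normalise $\hat n_j^0 \coloneqq n_j(\xi_j^0)/|n_j(\xi_j^0)|$ and collect these unit row vectors as the rows of a matrix $N \in \R^{d \times d}$. Since each row has unit norm, $\|N\| \lesssim 1$; meanwhile $|n_j^0| \in [1, O(A)]$ by \eqref{regular}, so \eqref{transverse} yields $|\det N| \gtrsim A^{-O(1)}$. Via $\sigma_{\min}(N) \geq |\det N|/\|N\|^{d-1}$ all singular values of $N$ lie in $[A^{-O(1)}, A^{O(1)}]$, and hence so do those of $M \coloneqq N^{-T}$. Writing the columns of $M$ as $m_1, \dots, m_d$, the identity $NM = I$ gives the duality $\hat n_i^0 \cdot m_j = \delta_{ij}$.

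Next I would let $B_j(\xi_j^0) \in \R^{d \times (d-1)}$ be the matrix with columns $\{m_i : i \in [d] \setminus \{j\}\}$. By duality, $n_j^0$ is automatically a left null vector, and all its columns lie in $(n_j^0)^\perp$, which coincides with the column space of the canonical matrix $\tilde B_j \in \R^{d \times (d-1)}$ whose top $(d-1) \times (d-1)$ block is $I_{d-1}$ and whose bottom row is $\nabla_\xi h_j(\xi_j^0)$. Setting $B'_j \coloneqq \tilde B_j^+ B_j(\xi_j^0)$ via the Moore--Penrose pseudoinverse gives $\tilde B_j B'_j = B_j(\xi_j^0)$, which matches \eqref{tubx} at $\xi_j = \xi_j^0$ and extends $B_j(\xi_j)$ to all $\xi_j \in B^{d-1}(\xi_j^0, 2\eps)$ via that formula. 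The singular values of $\tilde B_j$ lie in $[1, O(A)]$ (being those of $\sqrt{I_{d-1} + (\nabla_\xi h_j)^T \nabla_\xi h_j}$), those of $B_j(\xi_j^0)$ are $A^{O(1)}$ by singular-value interlacing applied to the column-submatrix of $M$, and hence $B'_j$ and its inverse also have singular values $A^{O(1)}$.

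The remaining properties are now immediate:
$$M_0 = p \sum_{j \in [d]} B_j(\xi_j^0) B_j(\xi_j^0)^T = p \sum_{j \in [d]} \sum_{i \neq j} m_i m_i^T = p(d-1) M M^T$$
inherits the $A^{O(1)}$ singular-value bounds from $M$, and
$$\frac{1}{p(d-1)} M_0 - B_j(\xi_j^0) B_j(\xi_j^0)^T = M M^T - (M M^T - m_j m_j^T) = m_j m_j^T$$
is a rank-one positive semi-definite matrix by construction. The main point to appreciate, and the reason for the dual-basis prescription, is that the ``isotropic'' alternative $B'_j = (\tilde B_j^T \tilde B_j)^{-1/2}$---which would make $B_j(\xi_j^0) B_j(\xi_j^0)^T$ the orthogonal projection onto $(n_j^0)^\perp$---already fails the positive semi-definite inequality for $d \geq 3$ when the $\hat n_j^0$ are close to but not exactly orthonormal; the dual basis is precisely what produces the exact cancellation $\sum_{i \neq j} m_i m_i^T = M M^T - m_j m_j^T$ that reduces the constraint to the tautologically non-negative $m_j m_j^T$.
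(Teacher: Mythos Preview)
Your argument is correct and is essentially the paper's proof in slightly different notation: the paper sets $B_j(\xi_j^0) = S^{-1}\pi_j$ where $S$ has rows $n_j(\xi_j^0)$, which (up to the harmless column rescaling introduced by your unit-normalisation) is exactly your matrix $M$ with the $j$-th column deleted, and then both computations reduce $\tfrac{1}{p(d-1)}M_0 - B_j(\xi_j^0) B_j(\xi_j^0)^T$ to the rank-one form $S^{-1} e_j^T e_j S^{-T}$ (equivalently, your $m_j m_j^T$). One small slip to fix: you write $M \coloneqq N^{-T}$, but the duality relation $NM = I$ that you use immediately afterwards requires $M = N^{-1}$; the remainder of the argument is consistent with the latter.
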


\begin{proof}  Let $S \in \R^{d \times d}$ be the matrix with rows $n_j(\xi_j^0)$ for $j \in [d]$.  From \eqref{regular}, \eqref{transverse} and Cramer's rule one sees that $S$ is invertible, with all singular values $A^{O(1)}$.  If $\pi_j \in \R^{d \times d-1}$ are the matrices from Section \ref{curv-sec}, then $n_j(\xi_j^0)$ lies in the left null space of $S^{-1} \pi_j$, and hence we can write
$$ S^{-1} \pi_j = \begin{pmatrix} I_{d-1} & \nabla_\xi h_j(\xi_j)^T \end{pmatrix} B'_j$$
where $B'_j \in \R^{d-1 \times d-1}$ is formed from $S^{-1} \pi_j$ by deleting the bottom row.  Since $\pi_j$ has $d-1$ singular values of $1$ and one singular value of $0$, $S$ has all singular values $A^{O(1)}$, and the matrix $\begin{pmatrix} I_{d-1} & \nabla_\xi h_j(\xi_j)^T \end{pmatrix}$ has $d-1$ singular values of $A^{O(1)}$ and one singular value of $0$, we see that $B'_j$ has all singular values $A^{O(1)}$.  By construction we have
$$ B_j(\xi_j^0) B_j(\xi_j^0)^T = S^{-1} \pi_j \pi_j^T (S^{-1})^T$$
and hence
$$ M_0 = p(d-1) S^{-1} (S^{-1})^T$$
so that $\frac{1}{p(d-1)} M_0 -B_j(\xi_j^0) B_j(\xi_j^0)^T$ is positive semi-definite as required.
\end{proof}

Henceforth $B'_1,\dots,B'_d$ are as in the above lemma.

Let $f_j \in L^2(B^{d-1}(\xi_j^0, \eps))$, which we can take to be Schwartz functions not identically zero; the general case can then be handled by a limiting argument.
We introduce a (squared) Gabor-type transform of the ${\mathcal E}_j f_j$.  
For any $r \geq \eps^{-2}$, we define the function $G_{j,r}\colon \R^d \times \R^{d-1} \to \R$ by the formula
\begin{equation}\label{gr-def}
 G_{j,r}((x',x_d), \xi_j) \coloneqq r^{-0.9(d-1)} \left| \int_{\R^{d-1}} {\mathcal E}_j f_j( y', x_d ) e^{-2\pi i \xi_j (y')^T} \varphi_{x',r^{0.9}}(y')\ dy' \right|^2
\end{equation}
where the rescaled cutoffs $\varphi_{x',r^{0.9}}$ are defined in Section \ref{notation-sec}.
Because the Fourier transform of ${\mathcal E}_j f_j( y', x_d )$ is supported on $B^{d-1}(\xi_j^0, \eps)$, and the Fourier transform of
$\varphi_{x',r^{0.9}}$ is supported in $B^{d-1}(0,r^{-0.9})$, we see that $G_{j,r}((x',x_d),\xi_j)$ vanishes unless $\xi \in B^{d-1}(\xi_j^0,2\eps)$; also, as $f_j$ is Schwartz, $G_{j,r}$ will be rapidly decreasing in the $x'$ variable.  Furthermore, from Plancherel's theorem we have
\begin{equation}\label{job}
 \int_{B^{d-1}(\xi_j^0,2\eps)} G_{j,r}((x',x_d), \xi_j)\ d\xi_j = r^{-0.9(d-1)} \int_{\R^{d-1}} |{\mathcal E}_j f_j( y', x_d )|^2 \varphi_{x',r^{0.9}}(y')^2\ dy',
\end{equation}
thus by \eqref{norm} the marginal $\int_{\R^{d-1}} G_{j,r}((x',x_d), \xi_j)\ d\xi_j$ is an averaged version of $|{\mathcal E}_j f_j( x', x_d )|^2$ at scale $r^{0.9}$ in the $x'$ variable.  Informally, $G_{j,r}(x,\xi_j)$ measures the energy density of ${\mathcal E}_j f_j$ at the physical location $x + O(r^{0.9})$ and at horizontal frequencies $\xi_j + O(r^{-0.9})$.

For each $x = (x',x_d)$, let $(\vec \Omega', \vec \mu'[r,x])$ be the $d$-tuple of measure spaces $(\Omega'_j, \mu'_j[r,x])$, where
$$ \Omega'_j \coloneqq \R^{d-1} \times B^{d-1}(\xi_j^0, 2\eps)$$
(parameterised by $(y',\xi_j)$ with $y' \in \R^{d-1}$ and $\xi_j \in B^{d-1}$) 
and
\begin{equation}\label{demure}
 d\mu'_j[r,x] \coloneqq G_{j,r}((y',x_d),\xi_j) \gamma_r( (x'-y') B'_j )\ dy' d\xi,
\end{equation}
with the gaussian cutoff $\gamma_r$ defined by \eqref{gammat} or Section \ref{notation-sec}.
From \eqref{job} we see that $\mu'_j$ is a finite measure that is not identically zero.
We have a matrix function $ B \colon \Disjoint \vec \Omega' \to \R^{d \times d-1}$ defined by
\begin{equation}\label{bud}
  B( j, (y',\xi_j) ) \coloneqq  B_j(\xi_j)
	\end{equation}
where $ B_j$ was defined in \eqref{tubx}.  These matrices will play the role that $B[x]$ did in the preceding section.  We then set
$$ \vec p \coloneqq (p,\dots,p)$$
and define the virtual matrix-valued function $M \in {\mathcal L}^\infty((\vec \Omega')^{\vec p} \to \R^{d \times d})$ by
\begin{equation}\label{mdef}
 M \coloneqq \Sigma_{\vec p}(  B  B^T ).
\end{equation}
In contrast to the previous section, the quantity $M$ is now independent of $x$, which simplifies the situation slightly by eliminating some lower order error terms.  The determinant $\det M \in {\mathcal L}^\infty((\vec \Omega')^{\vec p} \to \R)$ is then a virtual scalar function.  We then define $\tilde C(R,r)$ for $\eps^{-2} \leq r \leq R$ to be the best constant for which one has the inequality
\begin{equation}\label{ineq}
r^{-d} \int_{\R^d} \eta_R(x) \int_{(\vec \Omega')^{\vec p}} \det M\ d\vec \mu'[r,x]^{\vec p}\ dx \leq \tilde C(R,r)
\prod_{j \in [d]} \| f_j \|_{L^2(B^{d-1}(\xi_j^0, \eps))}^{2p}
\end{equation}
for all Schwartz functions $f_j \colon B^{d-1}(\xi_j^0,\eps) \to \C$ that are not identically zero, where $\eta_R$ is defined in Section \ref{notation-sec}.  By a standard limiting argument, the inequality \eqref{ineq} then in fact holds for all $f_j \in L^2(B^{d-1}(\xi_j^0, \eps) \to \C)$.

We now establish Claim (i). We begin with the upper bound $\tilde C(R,r) \lesssim A^{O(1)} C(R,r)$.  Let $f_j \colon B^{d-1}(\xi_j^0,\eps) \to \C$ be Schwartz functions not identically zero.
From \eqref{tubx} and the bounds on $B'_j$ and $\xi_j^0$ we have
$$ B_j(\xi_j) B_j^T(\xi_j) = B_j^0 (B_j^0)^T + O(A^{O(1)} \eps )$$
for all $(j, (y',\xi_j)) \in \Disjoint \vec \Omega$, and hence by Theorem \ref{non-neg}(i), the left-hand side of \eqref{ineq} is equal to
\begin{equation}\label{tang}
 (\det(M_0) + O(A^{O(1)} \eps)) r^{-d} \int_{\R^d} \eta_R(x) \vec \mu'[r,x]( \vec \Omega' )^{\vec p}\ dx.
\end{equation}
By \eqref{demure}, \eqref{job} one has
\begin{equation}\label{joey}
\begin{split}
 \| \mu'_j[r,x] \|_{\TV} &= 
\int_{B^{d-1}(\xi_j^0, 2\eps)} \int_{\R^{d-1}} G_{j,r}((y',x_d),\xi_j) \gamma_r((x'-y') B'_j)\ dy' d\xi \\
&= 
r^{-0.9(d-1)} \int_{\R^{d-1}} \int_{\R^{d-1}} |{\mathcal E}_j f_j( z', x_d )|^2 \varphi_{y',r^{0.9}}( z' )^2 \gamma_r( (x'-y') B'_j )\ dz' dy'.
\end{split}
\end{equation}
Since all the singular values of $B'_j$ are $A^{O(1)}$, and $\varphi$ is rapidly decreasing, one has
\begin{equation}\label{chang}
 r^{-0.9(d-1)} \int_{\R^{d-1}} \varphi_{y',r^{0.9}}(z')^2 \gamma_t( (x'-y') B'_j )\ dy' \lesssim A^{O(1)} \rho_{x',t}(z')
\end{equation}
for all $t \geq r^{0.9}$, where $\rho_{x',t}$ is defined in \eqref{rhoar} or Section \ref{notation-sec}.  Applying this with $t=r$ and using the Fubini-Tonelli theorem to evaluate the $y'$ integral first, one concludes
\begin{align*}
 \| \mu'_j \|_{\TV} &\lesssim A^{O(1)} \int_{\R^{d-1}} |{\mathcal E}_j f_j( z', x_d )|^2 \rho_{x',r}(z') \\
&\lesssim A^{O(1)} \energy_r[{\mathcal E}_j f_j](x)
\end{align*}
thanks to \eqref{Avg-def}.  Thus
$$ \vec \mu'[r,x]( \vec \Omega' )^{\vec p} \lesssim A^{O(1)} \prod_{j \in [d]} \energy_r[{\mathcal E}_j f_j](x)$$
and hence by \eqref{tang}, \eqref{crat} we have
$$
r^{-d} \int_{\R^d} \eta_R(x) \int_{(\vec \Omega')^{\vec p}} \det M\ d\vec \mu'[r,x]^{\vec p}\ dx \lesssim A^{O(1)} C(R,r)
\prod_{j \in [d]} \| f_j \|_{L^2(B^{d-1}(\xi_j^0, \eps))}^{2p}$$
which when compared with \eqref{ineq} gives the upper bound $\tilde C(R,r) \lesssim A^{O(1)} C(R,r)$.

Now we prove the matching lower bound $\tilde C(R,r) \gtrsim A^{O(1)} C(R,r)$.  From Lemma \ref{matrix-lem}, the quantity $\det(M_0) + O(A^{O(1)} \eps)$ appearing in \eqref{tang} is equal to $A^{O(1)}$, thus by \eqref{ineq}
$$
r^{-d} \int_{\R^d} \eta_R(x) \vec \mu'[r,x]( \vec \Omega' )^{\vec p}\ dx \lesssim A^{O(1)}
\tilde C(R,r) \prod_{j \in [d]} \| f_j \|_{L^2(B^{d-1}(\xi_j^0, \eps))}^{2p}$$
for any Schwartz $f_j \colon B^{d-1}(\xi_j^0,\eps) \to \C$ not identically zero.  Similar to \eqref{chang}, we have a lower bound
$$
 r^{-0.9(d-1)} \int_{\R^{d-1}} \varphi_{y',r^{0.9}}(z')^2 \gamma_r((x'-y') B'_j )\ dy' \gtrsim 1$$
whenever $z' \in B^{d-1}(x', A^{-C} r)$ for some large absolute constant $C \geq 1$ depending only on $d$, since in this case we would have $\gamma_r((x'-y') B'_j) \sim 1$ for $y' \in B^{d-1}(z', r^{0.95})$ (say), and then one can use \eqref{norm} and the rapid decay of $\varphi$.  Using \eqref{joey}, we conclude that
$$ \| \mu'_j[r,x] \|_{\TV}  \gtrsim E_{j,r}(x',x_d) $$
where the local energies $E_{j,r}(x',x_d)$ are defined by
$$ E_j(x',x_d) \coloneqq \int_{B^{d-1}(x', A^{-C} r)} |{\mathcal E}_j f_j( z', x_d )|^2\ dz'.$$
We conclude that
$$ r^{-d} \int_{B^d(0,R)} \prod_{j \in [d]} E_{j,r}(x)^p
\lesssim A^{O(1)} \tilde C(R,r) \prod_{j \in [d]} \| f_j \|_{L^2(B^{d-1}(\xi_j^0, \eps))}^{2p}.$$
Modulating $f_j$ by various phases in order to spatially translate ${\mathcal E}_j f_j$, we conclude that
\begin{equation}\label{top}
 r^{-d} \int_{B^d(0,R)} \prod_{j \in [d]} E_{j,r}(x+h_j)^p\ dx
\lesssim A^{O(1)} \tilde C(R,r) \prod_{j \in [d]} \| f_j \|_{L^2(B^{d-1}(\xi_j^0, \eps))}^{2p}
\end{equation}
uniformly for all shifts $h_j \in \R^{d}$.

Next, by a partition of unity, we see that
$$ \energy_r[{\mathcal E}_j f_j](x',x_d) \lesssim A^{O(C)} \sum_{k_j \in \Z^{d-1}} \langle k_j \rangle^{-10d^2} E_{j,r}(z' + A^{-2C} k_j, x_d ).$$
Applying H\"older's inequality when $p \geq 1$, and $(\sum_k c_k)^p \leq \sum_k c_k^p$ for $p<1$, we conclude that
$$ \energy_r[{\mathcal E}_j f_j](x',x_d)^p \lesssim A^{O(C)} \sum_{k_j \in \Z^{d-1}} \langle k_j \rangle^{-10d^2 \min(p,1)} E_{j,r}(z' + A^{-2C} k_j, x_d )^{p}.$$
Since $p > \frac{1}{d-1}$, the series $\sum_{k_j \in \Z^{d-1}} \langle k_j \rangle^{-10d^2 \min(p,1)} $ is absolutely convergent.  By the triangle inequality and \eqref{top}, we conclude that
$$ r^{-d} \int_{B^d(0,R)} \prod_{j \in [d]} \energy_r[{\mathcal E}_j f_j](x)^p\ dx
\lesssim A^{O(1)} \tilde C(R,r) \prod_{j \in [d]} \| f_j \|_{L^2(B^{d-1}(\xi_j^0, \eps))}^{2p}$$
and on comparison with \eqref{crat} one obtains the desired lower bound $\tilde C(R,r) \gtrsim A^{O(1)} C(R,r)$ (using the usual density argument to remove the Schwartz hypothesis on $f_j$).

It remains to establish Claim (iii).  This is accomplished in three stages.  Firstly, by using the dispersion relation of ${\mathcal E}_j f_j$, we replace the ``horizontal'' virtual integral in \eqref{ineq} with a ``spatial'' virtual integral in which different values of $x_d$ interact with each other at a horizontal scale of $r_1$ (and a slightly larger vertical scale, which we will set somewhat arbitrarily to be $r_1^{1.2}$, though in practice any scale that is genuinely between $r_2$ and $r_1^2$ would have worked here).  Then, using a variant of the heat flow monotonicity arguments in the previous section, we bound this integral (up to small errors) by a similar integral in which the horizontal scale has increased to $r_2$.  Finally, we use the propagation properties of ${\mathcal E}_j f_j$ again to revert back to a horizontal virtual integral, which (again up to small errors) can be controlled by $\tilde C(R,r_2)$.

We turn to the details.  Suppose that $\eps^{-2} \leq r_1 \leq r_2 \leq R$ with $r_2 \leq r_1^{1.1}$.  We may normalise $\| f_j \|_{L^2(B^{d-1}(\xi_j^0, \eps))} = 1$ for all $j \in [d]$, and our task is now to show that
\begin{align*}
&r_1^{-d} \int_{\R^d} \eta_R(x) \int_{(\vec \Omega')^{\vec p}} \det M\ d\vec \mu'[r_1,x]^{\vec p}\ dx \\
&\quad \leq \tilde C(R,r_2) + O\left( A^{O(1)} \delta \sup_{r_1 \leq t \leq r_2} C(R,t)\right).
\end{align*}

For any time $t>0$ and any $x = (x',x_d) \in \R^d$, we introduce a new $d$-tuple $(\vec \Omega, \vec \mu[t,x])$ of measure spaces $(\Omega_j, \mu_j[t,x])$ by setting
$$ \Omega_j \coloneqq \R^{d} \times B^{d-1}(\xi_j^0, 2\eps)$$
(parameterised by $((y',y_d),\xi_j)$ with $y' \in \R^{d-1}$, $y_d \in \R^d$ and $\xi_j \in B^{d-1}(\xi_j^0,2\eps)$, and setting $y = (y',y_d)$) 
and
\begin{equation}\label{muir}
 d\mu_j[t,x] \coloneqq r_1^{-1.2} G_{j,r_1}(y,\xi_j) \gamma_t( (x-y) B_j(\xi_j) ) \gamma_{r_1^{1.2}}^{(1)}( x_d - y_d )\ dy d\xi_j
\end{equation}
where the weight $\gamma_{r_1^{1.2}}^{(1)}$ is defined in Section \ref{notation-sec}.  As before, we can define a matrix function $ B \colon \Disjoint \vec \Omega \to \R^{d \times d-1}$ by the formula
$$  B( j, (y,\xi_j) ) \coloneqq  B_j(\xi_j)$$
and then define the virtual matrix-valued function $M \in {\mathcal L}^\infty((\vec \Omega)^{\vec p} \to \R^{d \times d})$ by the formula \eqref{mdef}.  We then define the quantity
$$
Q(t) \coloneqq t^{-d} \int_{\R^d} \eta_R(x) \int_{\vec \Omega^{\vec p}} \det M\ d\vec \mu[t,x]^{\vec p}\ dx $$
for any $t>0$.

The virtual integral defining $Q(t)$ involves all of the spatial domain $\R^d$, and not just the horizontal slice $\R^{d-1} \times \{x_d\}$.  On the other hand, the \emph{dispersion relation} for ${\mathcal E}_j$ asserts, roughly speaking, that components of ${\mathcal E}_j f_j$ at (horizontal) frequency $\xi_j$ should propagate in the direction $n_j(\xi_j)$ defined in \eqref{njxi}.  This suggests that one can approximately express $Q(t)$ as an analogous integral that only requires evaluating ${\mathcal E}_j f_j$ on the horizontal slice $\R^{d-1} \times \{x_d\}$.  This is indeed the case:

\begin{lemma}[Horizontal approximation of $Q(t)$]\label{compar}  For all $r_1 \leq t \leq r_2$, one has
$$
Q(t) = t^{-d} \int_{\R^d} \eta_R(x) \int_{(\vec \Omega')^{\vec p}} \det M\ d\vec \mu'[t,x]^{\vec p}\ dx + O( A^{O(1)} \delta C(R,t) ).$$
\end{lemma}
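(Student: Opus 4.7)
The plan is to transform the spatial measure $\vec\mu[t,x]$ into the horizontal measure $\vec\mu'[t,x]$ in three stages: a dispersion approximation of the Gabor transform $G_{j,r_1}$, a change of variables and marginalization in the vertical coordinate $y_d$, and a Plancherel-based scale change from $G_{j,r_1}$ to $G_{j,t}$.

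The first stage exploits the dispersion relation for ${\mathcal E}_j f_j$: wave packets at horizontal frequency $\xi_j$ propagate along $n_j(\xi_j) = (-\nabla_\xi h_j(\xi_j),1)$. I would establish the pointwise estimate
\[
G_{j,r_1}((y',y_d),\xi_j) = G_{j,r_1}((y'',x_d),\xi_j) + O(A^{O(1)} r_1^{-c}) \|f_j\|_{L^2}^2
\]
for $|y_d - x_d| \leq r_1^{1.2}$, where $y'' \coloneqq y' + (y_d - x_d)\nabla_\xi h_j(\xi_j)$; the error comes from the quadratic Taylor term of $h_j$, which produces a dispersive phase mismatch of order $|y_d-x_d| \cdot r_1^{-1.8} \leq r_1^{-0.6}$ at the Gabor frequency uncertainty $r_1^{-0.9}$. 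This is essentially a $TT^*$ computation comparing the two inner products that define $G_{j,r_1}$ on the horizontal slices at heights $y_d$ and $x_d$, using \eqref{ext-def} and Plancherel.

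With this approximation in hand, I substitute $y' \mapsto y''$ (unit Jacobian) in the spatial measure \eqref{muir}. Using the factorization \eqref{tubx}, the Gaussian weight collapses via $(x-y) B_j(\xi_j) = [(x'-y') + (x_d-y_d)\nabla_\xi h_j(\xi_j)] B'_j = (x'-y'')B'_j$, so $\gamma_t((x-y)B_j(\xi_j))$ loses its $y_d$-dependence. Integrating out $y_d$ against the unit-mass factor $r_1^{-1.2}\gamma^{(1)}_{r_1^{1.2}}(x_d-y_d)$ yields a horizontal measure of the same form as $\mu'_j[t,x]$, but with the Gabor scale locked at $r_1$ rather than $t$. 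Expanding both $G_{j,r_1}$ and $G_{j,t}$ as quadratic forms in ${\mathcal E}_j f_j$ via Plancherel and integrating against the broader outer Gaussian $\gamma_t((x'-y'')B'_j)$ (whose scale $t$ dominates both Gabor windows $r_1^{0.9}$ and $t^{0.9}$), these two expressions agree up to a relative error $O(A^{O(1)}(r_1/t)^c)$, closing the scale gap.

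Finally, I combine these measure-level approximations via Lemma \ref{hold}: the resulting small total variation perturbations in each $\mu_j[t,x]$ translate into comparable relative errors in the virtual integral of $\det M$. A further loss of $O((r_2/R)^c)$ arises from the boundary of $\eta_R$, since Stage~1 propagation shifts arguments by up to $r_1^{1.2} \leq r_2$ and the portion of $\mathrm{supp}(\eta_R)$ within distance $r_2$ of its boundary has relative volume $O(r_2/R)$. The ``base'' quantity left after peeling off these relative errors is controlled by $C(R,t)\prod_j\|f_j\|_{L^2}^{2p}$ via the lower half of Claim~(i) applied to the intermediate horizontal measure, yielding the target error $O(A^{O(1)}\delta C(R,t))$. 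The hardest part is Stage~1: making the dispersion approximation pointwise (rather than only on average) with explicit $L^2$-normalized errors, since the virtual integration formalism propagates pointwise total-variation control much more cleanly than averaged control.
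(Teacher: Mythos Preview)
Your overall three-stage architecture matches the paper's proof closely, but Stage~1 as you have stated it does not close, and the reason is precisely the error normalization you chose.

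You propose the pointwise bound
\[
G_{j,r_1}((y',y_d),\xi_j) = G_{j,r_1}((y'',x_d),\xi_j) + O(A^{O(1)} r_1^{-c}) \|f_j\|_{L^2}^2.
\]
Such a bound can indeed be proven, but it is useless for Lemma~\ref{hold}. After integrating the error over $(y',y_d,\xi_j)$ against the weights in \eqref{muir}, you obtain a total-variation discrepancy of order $r_1^{-c}\,t^{d-1}\,\|f_j\|_{L^2}^2$. Lemma~\ref{hold} requires this to be small \emph{relative to} $\|\mu_j[t,x]\|_{\TV}$, and the latter is comparable to the \emph{local} energy $\energy_t[{\mathcal E}_j f_j](x)$, which at any given $x$ can be arbitrarily smaller than $\|f_j\|_{L^2}^2$. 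The resulting ratio blows up, and the factor $\prod_j \energy_t^p$ coming out of Lemma~\ref{hold} does not compensate (indeed for $p<1$ the exponent on the offending $\energy_{j}$ becomes negative). Your final appeal to $C(R,t)$ therefore does not recover the stated error.

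The paper fixes this not by going pointwise in $\xi_j$ but by \emph{integrating} over $\xi_j$ first: it shows (see \eqref{song}) that
\[
\int_{\R^{d-1}} |H(y',x_d,s,\xi_j)|\,d\xi_j \lesssim r_1^{-0.9(d-1)}\,\langle s\rangle^{O(1)}\,\delta \int_{\R^{d-1}} \rho_{y',r_1^{0.9}}(z')\,|F_j(z')|^2\,dz',
\]
an $L^1_{\xi_j}$ bound with error controlled by a \emph{local} weighted $L^2$ norm of $F_j={\mathcal E}_j f_j(\cdot,x_d)$. After integrating in $y'$ against the outer Gaussian and using \eqref{chang}, this becomes exactly $\delta\,\energy_t[{\mathcal E}_j f_j](x)$, so the TV ratio is $O(\delta)$ uniformly in $x$. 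The $\xi_j$-integration is what converts the Fourier-side multiplier estimate back into a physical-side local energy via Plancherel; a pointwise-in-$\xi_j$ bound cannot access this localization. Your closing remark has the issue inverted: what the virtual formalism needs is TV control, which is integrated over $\Omega'_j$, and the crucial point is that the error must be local in $x$, not pointwise in $\xi_j$.
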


\begin{proof}
From \eqref{crat} one has
$$
t^{- d} \int_{\R^d} \eta_R(x)\prod_{j \in [d]} \energy_{t}[{\mathcal E}_j f_j](x)^p \lesssim C(R,t)$$
so it will suffice to establish the pointwise bound
$$ \int_{\vec \Omega^{\vec p}} \det M\ d\vec \mu[t,x]^{\vec p} = \int_{(\vec \Omega')^{\vec p}} \det M\ d\vec \mu'[t,x]^{\vec p} + O\left( A^{O(1)} \delta \prod_{j \in [d]} \energy_t[{\mathcal E}_j f_j](x)^p\right)$$
for all $x \in \R^d$.

Fix $x = (x',x_d)$; we now abbreviate $\vec \mu[t,x],\mu'[t,x]$ as $\vec \mu, \vec \mu'$ respectively.  We abbreviate the estimate
$$ X = Y + O\left( A^{O(1)} \delta \prod_{j \in [d]} \energy_t[{\mathcal E}_j f_j](x)^p\right)$$
as
$$ X \approx Y,$$
thus our task is now to show that
$$ \int_{\vec \Omega^{\vec p}} \det M\ d\vec \mu^{\vec p} \approx \int_{(\vec \Omega')^{\vec p}} \det M\ d(\vec \mu')^{\vec p}.$$
It is convenient to introduce the function $F_j\colon \R^{d-1} \to \C$ for $j \in [d]$ by
$$ F_j(y') \coloneqq {\mathcal E}_j f_j(y',x_d)$$
so that by \eqref{Avg-def}
\begin{equation}\label{energy-relation}
 \energy_t[{\mathcal E}_j f_j](x) = \int_{\R^{d-1}} \rho_{x',t}(y') |F_j(y')|^2\ dy'.
\end{equation}

By making the change of variables 
\begin{equation}\label{changer}
y = (y',x_d) + r_1^{1.2} s n_j(\xi_j)
\end{equation}
in $\Omega_j$, with $n_j(\xi_j)$ defined by \eqref{njxi}, we obtain a projection map $\pi_j\colon \Omega_j \to \Omega'_j$ defined by
$$ \pi_j( (y',x_d) + r_1^{1.2} s n_j(\xi_j), \xi_j ) \coloneqq (y', \xi_j ),$$
which pushes forward the measure $\mu_j$ to the measure $\nu'_j$ defined by
$$ d\nu'_j \coloneqq \left( \int_\R G_{j,r_1}(y,\xi_j) \gamma_t((x-y) B_j(\xi_j)) \gamma^{(1)}(s)\ ds\right) dy' d\xi_j$$
with $y$ defined by \eqref{changer}.  Also, the virtual function $M$ on $(\vec \Omega')^{\vec p}$ pulls back by these maps to the virtual function also denoted $M$ on $\vec \Omega^{\vec p}$, because the maps $\pi_j$ do not affect the $\xi_j$ variable.  Setting $\vec \nu'$ to be the $d$-tuple of measures $\nu'_j$, we thus conclude from \eqref{change-multi} that
$$ \int_{\vec \Omega^{\vec p}} \det M\ d\vec \mu^{\vec p} = \int_{(\vec \Omega')^{\vec p}} \det M\ d(\vec \nu')^{\vec p}.$$
Since $n_j(\xi_j)$ is in the null space of $B_j(\xi_j)$, we can rewrite $d\nu'_j$ as
$$ d\nu'_j = \left( \int_\R G_{j,r_1}((y',x_d) + r^{1.2} sn_j(\xi_j),\xi_j) \gamma^{(1)}(s)\ ds\right)  \gamma_t((x'-y') B'_j)\ dy' d\xi_j.$$
If for any $r>0$ we define the tuple $\vec \mu'_r =  \vec \mu'_r[t,x]$ of measures $\mu'_{j,r} = \mu'_{j,r}[t,x]$ on $\Omega'_j$ by
\begin{equation}\label{dmuj}
 d\mu'_{j,r} \coloneqq G_{j,r}((y',x_d),\xi_j) \gamma_t((x'-y') B'_j)\ dy' d\xi_j
\end{equation}
then $\vec \mu' = \vec \mu'_t$ and
\begin{equation}\label{numu}
d\nu'_j - d\mu'_{j,r_1} = \left( \int_\R H( y',x_d,s,\xi_j) \gamma^{(1)}(s)\ ds\right) \gamma_t((x'-y') B'_j)\ dy' d\xi_j.
\end{equation}
where
$$ H(y',x_d,s,\xi_j) \coloneqq G_{j,r_1}((y',x_d) + r_1^{1.2} sn_j(\xi_j),\xi_j) - G_{j,r_1}((y',x_d),\xi_j).$$
It will now suffice to obtain the bounds
\begin{equation}\label{claim-1}
\int_{(\vec \Omega')^{\vec p}} \det M\ d(\vec \nu')^{\vec p} \approx \int_{(\vec \Omega')^{\vec p}} \det M\ d(\vec \mu'_{r_1})^{\vec p}
\end{equation}
and
\begin{equation}\label{claim-2}
\int_{(\vec \Omega')^{\vec p}} \det M\ d(\vec \mu'_{r_1})^{\vec p} \approx
\int_{(\vec \Omega')^{\vec p}} \det M\ d(\vec \mu'_{t})^{\vec p}.
\end{equation}

We first establish \eqref{claim-1}.
Applying Lemma \ref{hold} and the triangle inequality, it suffices to show that
\begin{equation}\label{bunbun}
( \vec \nu'(\vec \Omega')^{\vec p} +  \vec \mu'_{r_1}(\vec \Omega')^{\vec p} ) 
\frac{\|\mu'_{j,r_1}-\nu'_j\|_{\TV}}{\|\mu'_{j,r_1}\|_{\TV} + \|\nu'_{j}\|_{\TV}}  \approx 0
\end{equation}
for any $j \in [d]$.  This will in turn follow from the estimates
\begin{equation}\label{mu-r}
\|\mu'_{j,r_1}\|_{\TV} \lesssim A^{O(1)} \energy_t[{\mathcal E}_j f_j](x) 
\end{equation}
and
\begin{equation}\label{mu-rd}
\| \mu'_{j,r_1} - \nu'_j \|_{\TV} \lesssim A^{O(1)} \delta \energy_t[{\mathcal E}_j f_j](x), 
\end{equation}
since this implies
$$ (\|\nu'_j\|_{\TV}^{p} + \|\mu'_{j,r_1}\|_{\TV}^{p} ) \frac{\|\mu'_{j,r_1}-\nu'_j\|_{\TV}}{\|\mu'_{j,r_1}\|_{\TV} + \|\nu'_j\|_{\TV} } 
\lesssim A^{O(1)} \delta \energy_t[{\mathcal E}_j f_j](x)^{p}$$
(possibly after adjusting the value of $c$).

To verify \eqref{mu-r}, we apply \eqref{job}, \eqref{dmuj} to write
$$ \|\mu'_{j,r_1}\|_{\TV} = r^{-0.9(d-1)} \int_{\R^{d-1}} \int_{\R^{d-1}} |F_j( z' )|^2 \varphi_{y',r_1^{0.9}}(z')^2 \gamma_t( (x'-y') B'_j )\ dy' dz'.$$
The claim \eqref{mu-r} then follows from \eqref{energy-relation}, \eqref{chang}, and the Fubini-Tonelli theorem. 

Now we show \eqref{mu-rd}.  From \eqref{numu}, the left-hand side is bounded by
$$
\int_{\R^{d-1}} \int_{\R^{d-1}} \left( \int_\R |H(y',x_d,s,\xi_j)| \gamma^{(1)}(s)\ ds\right)  \gamma_t((x'-y') B'_j)\ dy' d\xi_j.
$$
It will then suffice to show that
$$\int_{\R^{d-1}} \int_{\R^{d-1}} |H(y',x_d,s,\xi_j)| \gamma_t((x'-y') B'_j)\ dy' d\xi_j \lesssim \langle s \rangle^{O(1)} A^{O(1)} \delta \energy_t[{\mathcal E}_j f_j](x)$$
for any $s \in \R$.  

Fix $s$.  It will suffice to show the bound
\begin{equation}\label{song}
\int_{\R^{d-1}} |H(y',x_d,s,\xi_j) | d\xi_j \lesssim r_1^{-0.9(d-1)} \langle s \rangle^{O(1)} A^{O(1)} \delta \int_{\R^{d-1}} \rho_{y',r_1^{0.9}}(z') |F_j(z')|^2\ dz'
\end{equation}
for each $y' \in \R^{d-1}$, since the claim then follows from \eqref{energy-relation} and an estimate nearly identical to \eqref{chang}.  

Fix $y'$.  
From \eqref{ext-def}, \eqref{njxi} and the Fourier inversion formula we have
$$ {\mathcal E}_j f_j( (w',x_d) + r_1^{1.2} sn_j(\xi_j) ) = \int_{\R^{d-1}} \hat F_j(\xi) e^{2\pi i ( r_1^{1.2} s h_j(\xi) + (w' - r_1^{1.2} s \nabla_\xi h_j(\xi_j)) \xi^T )}\ d\xi$$
for any $w' \in \R^{d-1}$, and hence by \eqref{gr-def}
\begin{align*}
& G_{j,r_1}((y',x_d) + r_1^{1.2} s n_j(\xi_j),\xi_j)\\
&\quad = r_1^{-0.9(d-1)} \left| \int_{\R^{d-1}} \int_{\R^{d-1}} \hat F_j(\xi) e^{2\pi i ( r_1^{1.2} s h_j(\xi) + (w' - r_1^{1.2} s \nabla_\xi h_j(\xi_j)) \xi^T - w' \xi_j^T)} \varphi_{y',r_1^{0.9}}(w')\ d\xi dw' \right|^2.
\end{align*}
We make the change of variables $\xi = \xi_j + \zeta$, and insert the phase $e^{2\pi i r_1^{1.2} s(\nabla_\xi h_j(\xi_j) \xi_j^T - h(\xi_j))}$ outside the integral (which is harmless due to the absolute values) to write this as
$$ r_1^{-0.9(d-1)} \left| \int_{\R^{d-1}} \int_{\R^{d-1}} \hat F_j(\xi_j+\zeta) e^{2\pi i ( r_1^{1.2} s h_{j,\xi_j}(\zeta) + w' \zeta^T)} \varphi_{y',r_1^{0.9}}(w')\ d\zeta dw' \right|^2$$
where
\begin{equation}\label{h-def}
 h_{j,\xi_j}(\zeta) \coloneqq h_j(\xi_j+\zeta) - h_j(\xi_j) - \nabla_\xi h_j(\xi_j) \zeta^T
\end{equation}
is the remainder in the first order Taylor expansion of $h_j$ around $\xi_j$.  Performing the $w'$ integral (and using the fact that $\varphi$ is real and even), this can be rewritten as
$$ r_1^{0.9(d-1)} \left| \int_{\R^{d-1}} \hat F_j(\xi_j+\zeta) e^{2\pi i ( r_1^{1.2} s h_{j,\xi_j}(\zeta) + y' \zeta^T)} \hat \varphi( r^{0.9} \zeta )\ d\zeta \right|^2.$$
Applying this also with $s$ replaced by $0$, and using the inequality 
\begin{equation}\label{zw}
|z|^2 - |w|^2 \lesssim |w| |z-w| + |z-w|^2 \lesssim r^{-c} |z|^2 + r^c |z-w|^2
\end{equation}
for any $c>0$, we thus may bound the left-hand side of \eqref{song} by
$$ \lesssim \int_{\R^{d-1}} (r^{-c} G_{j,r_1}((y',x_d),\xi_j) + r^c X_{j,r_1,s}((y',x_d),\xi_j))\ d\xi$$
for any $c>0$, where
$$ X_{j,r_1,s}((y',x_d),\xi_j) \coloneqq r_1^{0.9(d-1)} \left| \int_{\R^{d-1}} \hat F_j(\xi_j+\zeta) a_{s,j,\xi_j}(\zeta) e^{2\pi i y' \zeta^T} \hat \varphi( r^{0.9} \zeta )\ d\zeta \right|^2$$
and
\begin{equation}\label{adef}
 a_{s,j,\xi_j}(\zeta) \coloneqq e^{2\pi i r_1^{1.2} s h_{j,\xi_j}(\zeta)}-1.
\end{equation}
The point will be that the support of $\varphi$ restricts $\zeta$ to be of size $O(r^{-0.9})$, which by the vanishing of $h_{j,\xi_j}$ to second order at the origin yields that $a_{s,j,\xi_j}$ is only of size $O( r_1^{1.2} s (r^{-0.9})^2 ) = O( r^{-0.6} |s| )$, with the $r^{-0.6}$ factor being the ultimate source of the $\delta$ gain in the estimates.

From \eqref{job} we have
$$ \int_{\R^{d-1}} G_{j,r_1}((y',x_d),\xi_j)\ d\xi_j \lesssim
r_1^{-0.9(d-1)}  \int_{\R^{d-1}} \rho_{y',r_1^{0.9}}(z') |F_j(z')|^2\ dz'$$
so it will suffice (after adjusting $c$ as necessary) to show that
\begin{equation}\label{troy}
 \int_{\R^{d-1}} X_{j,r_1,s}((y',x_d),\xi_j)\ d\xi_j \lesssim
r_1^{-0.9(d-1)} \langle s \rangle^{O(1)} A^{O(1)} \delta \int_{\R^{d-1}} \rho_{y',r_1^{0.9}}(z') |F_j(z')|^2\ dz'.
\end{equation}

Since $\hat F$ is supported in $B^{d-1}(0,A)$, and $\varphi$ is supported in $B(0,1)$, we may restrict $\xi_j$ to $B(0,2A)$.  We can then expand the left-hand side of \eqref{troy} as
$$r_1^{0.9(d-1)} \int_{\R^{d-1}} \eta'_{2A}(\xi_j) \left| \int_{\R^{d-1}} \int_{\R^{d-1}} F_j(z') a_{s,j,\xi_j}(\zeta) e^{2\pi i (-z' \xi_j^T + (y'-z') \zeta^T)} \hat \varphi( r^{0.9} \zeta )\ d z' d\zeta \right|^2$$
where $\eta'_{2A}$ is defined in Section \ref{notation-sec}.  This can be rearranged as
\begin{equation}\label{panic}
 r_1^{-0.9(d-1)} \int_{\R^{d-1}} \int_{\R^{d-1}} F_j(z'_1) \overline{F_j(z'_2)} K_{j,y',s}( z'_1, z'_2 )\ dy'_1 dy'_2 
\end{equation}
where the kernel $K_{j,y',s}$ is given by
\begin{align*}
& K_{j,y',s}(z'_1, z'_2) \coloneqq r_1^{1.8(d-1)} \int_{\R^{d-1}} \int_{\R^{d-1}} \int_{\R^{d-1}} \\
&\quad \eta'_{2A}( \xi_j  ) a_{s,j,\xi_j}(\zeta_1) \overline{a_{s,j,\xi_j}(\zeta_2)}  \hat \varphi(r_1^{0.9} \zeta_1) \hat \varphi(r_1^{0.9} \zeta_2) e^{2\pi i (-(z'_1-z'_2) \xi_j^T + (y'-z'_1) \zeta_1^T - (y'-z'_2) \zeta_2^T} \ d\zeta_1 d\zeta_2 d\xi_j.
\end{align*}
Rescaling $\zeta_1,\zeta_2$ by $r_1^{0.9}$, we can rewrite this as
$$ K_{j,y',s}(z'_1, z'_2) = \int_{\R^{d-1}} \int_{\R^{d-1}} \int_{\R^{d-1}} \eta'_{2A}( \xi_j ) a_{s,j,\xi_j}(r_1^{-0.9} \zeta_1) \overline{a_{s,j,\xi_j}(r_1^{-0.9} \zeta_2)} \hat \varphi(\zeta_1) \hat \varphi(\zeta_2) e^{2\pi i (-(z'_1-z'_2) \xi_j^T + (y'-z'_1) \zeta_1^T / r_1^{0.9} - (y'-z'_2) \zeta_2^T / r_1^{0.9})} \ d\zeta_1 d\zeta_2 d\xi_j.$$
Note that the support of $\varphi$ allows us to restrict $\zeta_1, \zeta_2$ to $B^{d-1}(0,1)$.  From \eqref{h-def} and two applications of the fundamental theorem of calculus, we may write
$$ h_{j,\xi_j}(\zeta) = \int_0^1 \int_0^1 ((\zeta \nabla^T_\xi) (\zeta \nabla^T_\xi) h_j)(\xi_j + uv \zeta)\ du dv$$
(where $\zeta \nabla^T_\xi$ is the directional derivative in the $\zeta$ direction), which when combined with \eqref{regular}, \eqref{adef}, and the chain rule yields the bounds
$$ a_{s,j,\xi_j}( r_1^{-0.9} \zeta_1) \overline{a_{s,j,\xi_j}(r_1^{-0.9} \zeta_2)} \lesssim A^{O(1)} r_1^{1.2} |s| r_1^{-1.8} \lesssim r_1^{-0.6} A^{O(1)} \langle s \rangle.$$
Crucially, the exponent of $r_1$ here is negative.  More generally one has the derivative bounds
$$ \nabla_{\zeta_1}^{\otimes m_1} \otimes \nabla_{\zeta_2}^{\otimes m_2} \otimes \nabla^{\otimes m}_{\xi_j} \otimes (a_{s,j,\xi_j}( r_1^{-0.9} \zeta_1) \overline{a_{s,j,\xi_j}(r_1^{-0.9} \zeta_2)}) \lesssim_{m_1,m_2,m} r_1^{-0.6} (A \langle s \rangle)^{O_{m_1,m_2,m}(1)}$$
for any $m_1,m_2,m$, provided that the regularity $M$ in \eqref{regular} is sufficiently large depending on $m_1,m_2,m$.  By integration by parts in the $\zeta_1, \zeta_2, \xi_j$ variables, we thus conclude the kernel bounds
$$ K_{j,y',s}(z'_1, z'_2)  \ll r_1^{-0.6} A^{O(1)} \langle s \rangle^{O(1)} \rho(z'_1-z'_2) \rho_{y',r_1^{0.9}}(z'_1) \rho_{y',r_1^{0.9}}(z'_2)$$
and hence by\eqref{panic} and Young's inequality (or Schur's test) we see that
$$ \int_{\R^{d-1}} X_{j,r_1,s}((y',x_d),\xi_j)\ d\xi_j \lesssim r_1^{-0.6} A^{O(1)} \langle s \rangle^{O(1)} r_1^{-0.9(d-1)} \int_{\R^{d-1}} |F(z')|^2 \rho_{y',r_1^{0.9}}(z')$$
which gives \eqref{troy} as required.

Finally we show \eqref{claim-2}.  By transitivity and symmetry of the $\approx$ relation, it will suffice to show that for any $r_1 \leq r \leq r_2$, we have
$$ \int_{(\vec \Omega')^{\vec p}} \det M\ d(\vec \mu'_{r})^{\vec p} \approx Z$$
for some quantity $Z$ independent of $r$.

The measure $\mu'_{j,r}$ can be expanded using \eqref{dmuj}, \eqref{gr-def} as
$$ d\mu'_{j,r} = 
r^{-0.9(d-1)} \left| \int_{\R^{d-1}} \gamma_t((x'-y') B'_j) F_j( z' ) e^{-2\pi i \xi_j (z')^T} \varphi_{y',r^{0.9}}(z')\ dz' \right|^2\ dy' d\xi_j$$
It will be convenient to compare this measure to
$$ d\tilde \mu'_{j,r} = 
r^{-0.9(d-1)} \left| \int_{\R^{d-1}} \gamma_t((x'-z') B'_j) F_j( z' ) e^{-2\pi i \xi_j (z')^T} \varphi_{y',r^{0.9}}(z')\ dz' \right|^2\ dy' d\xi_j.$$
Indeed, if we set $\tilde \mu'_r \coloneqq (\tilde \mu'_{1,r},\dots,\tilde \mu'_{d,r})$, we claim that
\begin{equation}\label{coco}
 \int_{(\vec \Omega')^{\vec p}} \det M\ d(\vec \mu'_{r})^{\vec p} \approx \int_{(\vec \Omega')^{\vec p}} \det M\ d(\vec \tilde \mu'_{r})^{\vec p}.
\end{equation}
By Lemma \ref{hold} and the triangle inequality, it suffices to show that
$$
( \vec \mu'_{r}(\vec \Omega')^{\vec p} +  \vec \tilde \mu'_{r}(\vec \Omega')^{\vec p} ) 
\frac{\|\mu'_{j,r}-\tilde \mu'_{j,r}\|_{\TV}}{\|\mu'_{j,r}\|_{\TV} + \|\tilde \mu'_{j,r}\|_{\TV}} \approx 0
$$
for any $j \in [d]$.  This will in turn follow from \eqref{mu-r} and the estimate
\begin{equation}\label{mu-rd-a}
\| \mu'_{j,r} - \tilde \mu'_{j,r} \|_{\TV} \lesssim A^{O(1)} t^{-c} \energy_t[{\mathcal E}_j f_j](x). 
\end{equation}
By \eqref{zw} (and \eqref{mu-r}), it suffices to show that
$$
\int_{\R^{d-1}} \int_{\R^{d-1}} r^{-0.9(d-1)} \left| \int_{\R^{d-1}} g_{j,t,x'}(z',y') F_j( z' ) e^{-2\pi i \xi_j (z')^T} \varphi_{y',r^{0.9}}(z')\ dz' \right|^2\ dy' d\xi_j
\lesssim A^{O(1)} \energy_t[{\mathcal E}_j f_j](x)$$
where
$$ g_{j,t,x'}(z',y') \coloneqq \gamma_t((x'-z') B'_j)-\gamma_t((x'-y') B'_j).$$
By Plancherel's theorem, the left-hand side may be expressed as
\begin{equation}\label{stif}
\int_{\R^{d-1}} \int_{\R^{d-1}} r^{-0.9(d-1)} |g_{j,t,x'}(z',y') F_j( z' ) \varphi_{y',r^{0.9}}(z')|^2\ dz' dy'.
\end{equation}
One can use the mean value theorem to bound
$$ g_{j,t,x'}(z',y') \lesssim A^{O(1)} \frac{r_1^{0.9}}{t} \left\langle \frac{z'-y'}{r_1^{0.9}} \right\rangle^{O(1)} \rho_{x',t}(y');$$
since $\frac{r_1^{0.9}}{t} \leq r_1^{-0.1}$, we can thus bound \eqref{stif} (after evaluating the $y'$ integral) by
$$
A^{O(1)} r_1^{-0.2} \int_{\R^{d-1}} |F_j( z' )|^2 \rho_{x',t}(z')\ dz',$$
giving the claim \eqref{coco}.

To finish the proof of \eqref{claim-2}, it suffices to show that
\begin{equation}\label{lefty}
 \int_{(\vec \Omega')^{\vec p}} \det M\ d(\vec{\tilde \mu}'_{r})^{\vec p} \approx Z
\end{equation}
for some quantity $Z$ independent of $r$.  Let $\Omega_{*,j} \coloneqq \R^{d-1}$ be parameterised by $\xi_j$, and let $\pi_j\colon \Omega'_j \to \Omega_{*,j}$ be the projection map $\pi_j \colon (y',\xi_j) \mapsto \xi_j$.  This map pushes forward $\tilde \mu'_{j,r}$ to the measure $\mu_{*,j,r}$ on $\Omega_{*,j}$ defined by
$$ d\mu_{*,j,r} \coloneqq
r^{-0.9(d-1)} \left(\int_{\R^{d-1}} \left| \int_{\R^{d-1}} g(z') F_j( z' ) e^{-2\pi i \xi_j (z')^T} \varphi_{y',r^{0.9}}(z')\ dz' \right|^2\ dy'\right) d\xi_j.$$
Let $(\vec \Omega_*,\vec \mu_*)$ be the $d$-tuple of measure spaces $(\Omega_{*,j}, \mu_{*,j})$.  The virtual function $M$ on $(\vec \Omega')^{\vec p}$ can be interpreted as the pullback of a virtual function on $\vec \Omega_*^{\vec p}$ which by abuse of notation we shall also call $M$.  By \eqref{change-multi}, we can thus write the left-hand side of \eqref{lefty} as
$$ \int_{\vec \Omega_*^{\vec p}} \det M\ d\vec \mu_{*,r}^{\vec p}.$$
The Fourier transform of
$$ y' \mapsto \int_{\R^{d-1}} g(z') F_j( z' ) e^{-2\pi i \xi_j (z')^T} \varphi_{y',r^{0.9}}(z')\ dz'$$
is equal to
$$ \xi \mapsto r^{0.9 (d-1)} \widehat{g F_j}(\xi_j + \xi) \hat \varphi( r^{0.9} \xi)$$
and hence by Plancherel's theorem, we may write
$$ d\mu_{*,j,r} =
r^{0.9(d-1)} \left(\int_{\R^{d-1}} |\widehat{g F_j}(\xi_j+\xi)|^2 |\hat \varphi(r^{0.9} \xi)|^2\ d\xi\right) d\xi_j$$
which on applying the rescaling $\xi = r^{-0.9} \zeta$ and using the support of $\hat \varphi$ becomes
$$ d\mu_{*,j,r} =
\left(\int_{B^{d-1}(0,1)} |\widehat{g F_j}(\xi_j+r^{-0.9} \zeta)|^2 |\hat \varphi(\zeta)|^2\ d\zeta\right) d\xi_j.$$
Note now that the $r$ parameter only affects the shift of $\xi_j$ in the argument of $g F_j$.  To exploit this, we rewrite the virtual integral once again, introducing the $d$-tuple $\vec \Omega_{**}$ of spaces $\Omega_{**,j} \coloneqq B^{d-1}(0,1) \times B^{d-1}(\xi_j^0, 2\eps)$ (with each $\Omega_{**,j}$ parameterised by $\zeta, \xi_j$) and $\vec \mu_{**,r}$ is the $d$-tuple of measures $\mu_{**,j,r}$ on $\Omega_{**,j}$ defined by
$$ d\mu_{**,j,r} \coloneqq |\widehat{g F_j}(\xi_j+r^{-0.9} \zeta)|^2 |\hat \varphi(\zeta)|^2\ d\zeta d\xi_j.$$
Observe that $\mu_{*,j,r}$ is the pushforward of $\mu_{**,j,r}$ by the map $(\zeta,\xi_j) \mapsto \xi_j$, and the virtual function $M$ on $\vec \Omega_*^{\vec p}$ pulls back to a virtual function on $\vec \Omega_{**}^{\vec p}$, which by abuse of notation we will continue to call $M$.  Then by \eqref{change-multi} one has
$$ \int_{\vec \Omega_*^{\vec p}} \det M\ d\vec \mu_{*,r}^{\vec p} = \int_{\vec \Omega_{**}^{\vec p}} \det M\ d\vec \mu_{**,r}^{\vec p}.$$
On the right-hand side, $M$ is now interpreted as the virtual function
$$ M \coloneqq \Sigma_{\vec p}(BB^T) = \Sigma_{\vec p}( (j, (\zeta,\xi_j)) \mapsto B_j(\xi_j) B_j(\xi_j)^T ).$$
We will compare $M$ with the variant
$$ M' \coloneqq \Sigma_{\vec p}( (j, (\zeta,\xi_j)) \mapsto B_j(\xi_j+r^{-0.9} \zeta) B_j(\xi_j+r^{-0.9} \zeta)^T ).$$
From the construction of $B_j$ (and the fact that $B'_j = O(A^{O(1)})$) we have
$$ B_j(\xi_j+r^{-0.9} \zeta) B_j(\xi_j+r^{-0.9} \zeta)^T  = B_j(\xi_j) B_j(\xi_j)^T + O( A^{O(1)} r^{-0.9} )$$
and hence by Lemma \ref{holder} 
$$ \int_{\vec \Omega_{**}^{\vec p}} \det M\ d\vec \mu_{**,r}^{\vec p} = \int_{\vec \Omega_{**}^{\vec p}} \det M'\ d\vec \mu_{**,r}^{\vec p}
+ O( A^{O(1)} r^{-0.9} \vec \mu_{**,r}(\vec \Omega_{**})^{\vec p} ).$$
From the Fubini-Tonelli theorem, Plancherel's theorem and \eqref{energy-relation}, we have
\begin{align*}
\mu_{**,j,r}(\Omega_{**,j}) &= \int_{\R^{d-1}} \int_{B^{d-1}(0,1)} |\widehat{g F_j}(\xi_j+r^{-0.9} \zeta)|^2 |\hat \varphi(\zeta)|^2\ d\zeta d\xi_j \\
&= \int_{\R^{d-1}} |\widehat{g F_j}(\xi)|^2\ d\xi \\
&= \int_{\R^{d-1}} |g F_j(y')|^2\ dy' \\
&\lesssim \energy_{t}[{\mathcal E}_j f_j](x).
\end{align*}
We conclude that
$$ \int_{\vec \Omega_{**}^{\vec p}} \det M\ d\vec \mu_{**,r}^{\vec p} \approx \int_{\vec \Omega_{**}^{\vec p}} \det M'\ d\vec \mu_{**,r}^{\vec p}$$
and so it now suffices to show that
$$ \int_{\vec \Omega_{**}^{\vec p}} \det M'\ d\vec \mu_{**,r}^{\vec p} \approx Z$$
for some $Z$ independent of $r$.

We now make the change of variables $(\zeta,\xi'_j) \coloneqq (\zeta,\xi_j + r^{-0.9} \zeta)$.  By \eqref{change-multi}, this lets us write
\begin{equation}\label{virt}
 \int_{\vec \Omega_{**}^{\vec p}} \det M'\ d\vec \mu_{**,r}^{\vec p} = \int_{\vec \Omega_{***}^{\vec p}} \det M\ d\vec \mu_{***}^{\vec p}
\end{equation}
where $\vec \Omega_{***}$ is the $d$-tuple of spaces $\Omega_{***,j} \coloneqq B^{d-1}(0,1) \times B^{d-1}(\xi_j^0, 2\eps)$ (with each $\Omega_{***,j}$ parameterised by $\zeta, \xi'_j$), $\vec \mu_{***,r}$ is the $d$-tuple of measures $\mu_{***,j}$ on $\Omega_{***,j}$ defined by
$$ d\mu_{***,j} \coloneqq |\widehat{g F_j}(\xi_j)|^2 |\hat \varphi(\zeta)|^2\ d\zeta d\xi_j$$
(which in particular is just supported in the subset $B^{d-1}(0,1) \times B^{d-1}(\xi_j^0, \eps)$ of $\Omega_{***,j}$) and $M$ is now interpreted as the virtual function
$$ M \coloneqq \Sigma_{\vec p}( (j, (\zeta,\xi'_j)) \mapsto B_j(\xi'_j) B_j(\xi'_j)^T ).$$
But the right-hand side of \eqref{virt} is now independent of $r$, concluding the proof of \eqref{claim-2}.
\end{proof}

We now show the key monotonicity formula:

\begin{lemma}[Monotonicity]  For $r_1 \leq t \leq r_2$, one has
$$ t \partial_t Q(t) \geq - O( A^{O(1)} \delta C(R,t) ).$$
\end{lemma}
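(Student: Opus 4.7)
The strategy is to mimic the heat flow monotonicity argument of Section \ref{curv-sec}, exploiting two structural features of the present setting: the virtual matrix $M = \Sigma_{\vec p}(BB^T)$ is now independent of the spatial variable $x$, which suppresses the $S_1,S_2$-type lower order error terms that appeared in Section \ref{curv-sec}; but the weight $w[t,x]$ carries an extra factor $\gamma^{(1)}_{r_1^{1.2}}(x_d-y_d)$ whose $x$-log-derivative injects a genuinely new, lower-order error term into the integration by parts.

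Set $\phi[x](j,(y,\xi_j)) \coloneqq (x-y) B_j(\xi_j)$, so that the only $t$-dependent factor in $w[t,x]$ is $\gamma_t(\phi[x])$. Applying Proposition \ref{diff-ii} with log-derivative $(2/t^2)|\phi|^2$ produces
$$t \partial_t Q(t) = -d Q(t) + \frac{2}{t^2} \Expect_t\!\bigl(\eta_R \det(M) \Sigma_{\vec p}(\phi\phi^T)\bigr),$$
where $\Expect_t(G) \coloneqq t^{-d}\int_{\R^d}\int_{\vec\Omega^{\vec p}} G\, d\vec\mu[t,x]^{\vec p}\, dx$. To produce the bilinear form of Theorem \ref{non-neg}(ii), I would introduce $F[x] \coloneqq \Sigma_{\vec p}(B \phi[x]^T)$ and the compactly supported column-vector map $W(t,x) \coloneqq \int \eta_R(x) \adj(M) F[x]\, d\vec\mu_{w[t,x]}^{\vec p}$, so that $\int_{\R^d} \operatorname{div}_x W\,dx=0$. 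The $x$-log-derivative of $w$ splits as $\nabla_x \log w = -(2/t^2) \phi B^T - (2\pi/r_1^{2.4})(x_d-y_d) e_d$; since $M$ is $x$-independent, $\partial_{x_i} F = M e_i^T$ and $\adj(M) M = \det(M) I_d$ give $\operatorname{div}_x(\adj(M) F) = d\det(M)$. Applying Proposition \ref{diff-ii} componentwise and summing then yields
\begin{align*}
\frac{2}{t^2}\Expect_t(\eta_R F^T \adj(M) F) &= d Q(t) + \Expect_t\!\bigl((\nabla_x\eta_R) \adj(M) F\bigr) \\
&\qquad - \frac{2\pi}{r_1^{2.4}}\Expect_t\!\bigl(\eta_R \Sigma_{\vec p}(x_d-y_d) e_d \adj(M) F\bigr).
\end{align*}
Writing $S_0 \coloneqq \det(M)\Sigma_{\vec p}(\phi\phi^T) - F^T \adj(M) F$ (the bilinear form $\{\phi,\phi\}$ of Theorem \ref{non-neg}(ii)) and combining the two displays, the $d Q(t)$ terms cancel and I arrive at
$$t \partial_t Q(t) = \frac{2}{t^2}\Expect_t(\eta_R S_0) + E_1 + E_2,$$
with $E_1$ the $\nabla_x\eta_R$ boundary term and $E_2$ the $\gamma^{(1)}$-correction.

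For the main term I would apply Theorem \ref{non-neg}(ii) with $B_j^0 \coloneqq B_j(\xi_j^0)$ and $M^0$ from Lemma \ref{matrix-lem}: the measure $\mu_j[t,x]$ is supported on $\xi_j \in B^{d-1}(\xi_j^0,2\eps)$, hence $B(j,(y,\xi_j)) = B_j^0 + O(A\eps)$, and the positive semi-definiteness hypothesis is supplied by Lemma \ref{matrix-lem} once $\eps$ from \eqref{construct-2} is small enough (after enlarging $C_0$). Thus $\int S_0\,d\vec\mu[t,x]^{\vec p}\geq 0$ pointwise in $x$, giving $\Expect_t(\eta_R S_0)\geq 0$.

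It remains to bound $E_1$ and $E_2$. On $\operatorname{supp}\nabla_x\eta_R$ one has $|x|\sim R$ and $|\nabla_x\eta_R|\lesssim 1/R$, while $|F|\lesssim A^{O(1)} t$ on the Gaussian support of $\phi$; combined with Lemma \ref{holder} and the $\vec\mu[t,x]\to\vec\mu'[t,x]$ comparison from Lemma \ref{compar} (together with Claim (i) relating $\tilde C$ and $C$), this yields $|E_1|\lesssim A^{O(1)}(t/R) C(R,t)\leq A^{O(1)}(r_2/R) C(R,t)$. For $E_2$, on the support of $\gamma^{(1)}_{r_1^{1.2}}$ the factor $|x_d-y_d|$ has Gaussian size $r_1^{1.2}$; combined with the $r_1^{-2.4}$ prefactor and $|F|\lesssim A^{O(1)} t \leq A^{O(1)} r_1^{1.1}$ this produces a gain of $r_1^{-2.4+1.2+1.1} = r_1^{-0.1}$, so $|E_2|\lesssim A^{O(1)} r_1^{-c} C(R,t)$ for $c=0.1$. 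Summing yields the claimed bound $t\partial_t Q(t)\geq -O(A^{O(1)}\delta C(R,t))$. The main obstacle is controlling $E_2$: this term has no analogue in Section \ref{curv-sec}, and the vertical scale $r_1^{1.2}$ — strictly between $r_2\leq r_1^{1.1}$ and $r_1^2$ — is engineered precisely so that this new error absorbs into the $r_1^{-c}$ half of $\delta$.
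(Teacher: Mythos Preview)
Your proposal is correct and follows essentially the same route as the paper: differentiate $Q(t)$, integrate by parts in $x$ using $\operatorname{div}_x(\eta_R\adj(M)F)$ to produce $S_0$, apply Theorem \ref{non-neg}(ii) via Lemma \ref{matrix-lem} for the main nonnegativity, and bound the two error terms (from $\nabla_x\eta_R$ and from the vertical Gaussian $\gamma^{(1)}$) by $A^{O(1)}(t/R + tr_1^{-1.2})C(R,t)$ through Lemma \ref{compar}. The paper handles the error bounds slightly more carefully---replacing your ``$|F|\lesssim A^{O(1)}t$ on the Gaussian support'' heuristic by the pointwise estimate $|\phi|\,w_j[t,x]\lesssim t\, w_j[t/2,x]^{\min(p,1)} w_j[t,x]^{1-\min(p,1)}$ together with H\"older, and noting a truncation argument needed because $\phi$ is unbounded---but the structure and numerology are identical.
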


\begin{proof}
We adapt the arguments of the previous section. We can write
$$ \vec \mu[t,x] = \vec \mu_{w[t,x]}$$
where $\vec \mu = (\mu_1,\dots,\mu_d)$ is the reference measure
\begin{equation}\label{dmuj-def}
 d\mu_j \coloneqq  G_{j,r_1}(y,\xi_j) \ dy d\xi
\end{equation}
and $w\colon \Disjoint \vec \Omega \to \R$ is the weight
\begin{equation}\label{wit-def}
 w[t,x] \coloneqq r_1^{-1.2} \gamma_t(\phi[x]) \gamma^{(1)}_{r_1^{1.2}}(\varphi[x]) 
\end{equation}
with $\phi[x]\colon \Disjoint \vec \Omega \to \R^{d-1}$ the function
\begin{equation}\label{philin}
 \phi[x](j,(y,\xi_j)) \coloneqq (x-y) B_j(\xi_j)
\end{equation}
and $\varphi[x]\colon \Disjoint \vec \Omega \to \R$ the function
$$ \varphi[x](j,y,\xi) \coloneqq x_d-y_d.$$
If we define (as in the previous section)
\begin{equation}\label{expect-def}
 \Expect_t( G ) \coloneqq t^{-d} \int_{\R^d} \int_{\Omega^{\vec p}} G[t,x] \ d\vec \mu_{w[t,x]}^{\vec p} dx 
\end{equation}
then
$$ Q(t) = \Expect_t( \eta_R \det M ).$$
Since
\begin{equation}\label{doto}
 t\partial_t w[t,x] = \frac{2}{t^2} \phi[x] \phi[x]^T w[t,x]
\end{equation}
and $\eta_R \det M$ is independent of $t$, Proposition \ref{diff-i} formally implies that
\begin{equation}\label{trop}
 t \partial_t Q(t) = \Expect_t( - d \eta_R \det M + \frac{2}{t^2} \eta_R \det(M) \Sigma_{\vec p}(\phi \phi^T) ).
\end{equation}
There is however a slight technical issue in justifying this identity, namely that the function $\phi[x]$ appearing in \eqref{doto} is unbounded due to the unboundedness of the coordinate $y$ in $\Omega_j = \R^{d} \times B^{d-1}(\xi_j^0, 2\eps)$.  However, the measures $\vec \mu_{w[t,x]}$ are rapidly decreasing in the $y$ variable, and one can rigorously justify \eqref{trop} by a truncation argument which we briefly summarise here.  Firstly, by the fundamental theorem of calculus, we can rewrite \eqref{trop} in an equivalent integral form
\begin{equation}\label{trop-2}
 Q(t+h) - Q(t) = \int_t^{t+h} (t')^{-1} \Expect_t( - d \eta_R \det M + \frac{2}{(t')^2} \eta_R \det(M) \Sigma_{\vec p}(\phi \phi^T) )\ dt'.
\end{equation}
Next, we weight each measure $\mu_j$ in \eqref{dmuj-def} by a cutoff $\chi_N$ for some extremely large scale $N$; by monotone convergence, these truncated measures will not vanish identically for $N$ large enough.  By repeating the arguments used to justify \eqref{tqt}, one can then establish a truncated version of \eqref{trop} in which the measures $\mu_j$ that appear implicitly on both sides of \eqref{trop-2} are weighted by $\chi_N$.  One can then send $N \to \infty$ using Lemma \ref{hold} and the rapid decrease of $G_{j,r_1}$ and $w[t,x]$ to remove the cutoff, and then \eqref{trop} follows from the fundamental theorem of calculus.  We leave the details to the interested reader.

In analogy with \eqref{tash}, we observe (formally, at least) that
\begin{equation}\label{tash-2}
 t^{-d} \int_{\R^d} \nabla_x \left(\int_{\Omega^{\vec p}} \eta_R \adj(M) \Phi[x]\ d\vec \mu_{w[t,x]}^{\vec p}\right)\ dx = 0
\end{equation}
where for each $x$, $\Phi = \Phi[x] \in {\mathcal L}^\infty(\Omega^{\vec p} \to \R^{1 \times d})$ is the virtual column vector-valued function
$$ \Phi \coloneqq \Sigma_{\vec p}( (\nabla_x^T \phi[x]) \phi[x]^T ).$$
We have
$$ \nabla_x^T w[t,x] = -\frac{2}{t^2} (\nabla_x^T \phi[x]) \phi[x]^T w[t,x] - 2 \pi r_1^{-2.4} \varphi[x] e_d^T$$
and thus by Proposition \ref{diff-ii} we can write \eqref{tash-2} as
$$
\Expect_t( \nabla_x (\eta_R \adj(M) \Phi) - \frac{2}{t^2} \eta_R \Phi^T \adj(M) \Phi - 2 \pi \eta_R r_1^{-2.4} \Sigma_{\vec p}(\varphi) e_d^T \adj(M) \Phi ) = 0.
$$
Again, we run into the technical issue that $\phi$ (and $\varphi$) are unbounded when justifying this identity, however this can be resolved by the same truncation argument presented previously; again, we leave the details to the interested reader.  We can therefore write $t \partial_t Q(t)$ as
$$ \Expect_t\left( \nabla_x (\eta_R \adj(M) \Phi) - d \eta_R \det M + \frac{2}{t^2} \eta_R S_0 - 2 \pi r_1^{-2.4} \eta_R \Sigma_{\vec p}(\varphi) e_d^T \adj(M) \Phi \right)$$
where
$$S_0 \coloneqq \det(M) \Sigma_{\vec p}(\phi \phi^T) - \Phi^T \adj(M) \Phi.$$
From the Leibniz rule as before (taking advantage of the fact that $M$ is now independent of $x$, and $\phi$ is now linear in $x$ thanks to \eqref{philin}) we have
$$\nabla_x (\eta_R \adj(M) \Phi) - d \eta_R \det M  = R^{-1} (\nabla_x \eta)_R \adj(M) \Phi.$$
We can thus write
$$ t \partial_t Q(t) = \Expect_t\left( \frac{2}{t^2} \eta_R S_0 + R^{-1} S_1 - 2 \pi r_1^{-2.4} S_2 \right)$$
where $S_1 = S_1[t,x]$, $S_2 = S_2[t,x]$ are the virtual functions
\begin{align*}
S_1 &\coloneqq (\nabla_x \eta)_R \adj(M) \Phi  \\
S_2 &\coloneqq \eta_R \Sigma_{\vec p}(\varphi) e_d^T \adj(M) \Phi.
\end{align*}

From Theorem \ref{non-neg}(ii) (with $B_j^0 = B_j( \xi_j^0)$), Lemma \ref{matrix-lem}, and the choice \eqref{construct-2} of $\eps$, we have
$$
 \int_{\vec \Omega^{\vec p}} S_0[t,x]\ d\vec \mu_{w[t,x]}^{\vec p} \geq 0 
$$
for each $x \in \R^d$, and thus
\begin{equation}\label{doke}
 \Expect_t\left( \frac{2}{t^2} \eta_R S_0 \right ) \geq 0.
\end{equation}
Now we control $S_1$, adapting the arguments used to prove Lemma \ref{lowerl}.  At each point $x \in \R^d$, the expression $S_1$ can be expanded as the sum of $O(1)$ virtual functions of the form $\Sigma_{\vec p}(F_1) \dots \Sigma_{\vec p}(F_n) \Sigma_{\vec p}(\phi^l)$, where $n=O(1)$, $F_1,\dots,F_n$ are bounded in magnitude by $A^{O(1)}$, and $\phi^l$ is one of the components of $\phi$.  By \eqref{expect-def}, Lemma \ref{holder}, we thus have
\begin{equation}\label{sodd}
 \Expect_t(S_1) 
\lesssim A^{O(1)} t^{-d} \int_{B^d(0,2R)} \vec \mu_{w[t,x]}(\vec \Omega)^{\vec p} \sum_{j \in [d]} \E_{(\mu_j)_{w_j[t,x]}} |\phi[x]|\ dx.
\end{equation}
From \eqref{wit-def} we have the pointwise estimate
$$ t^{-1} |\phi[x]| w_j[t,x] \lesssim w_j[t/2,x]^{\min(p,1)} w_j[t,x]^{1-\min(p,1)}$$
and hence by H\"older's inequality
$$ \E_{(\mu_j)_{w_j[t,x]}} |\phi[x]| \lesssim t (\| (\mu_j)_{w_j[t/2,x]} \|_{\TV} / \| (\mu_j)_{w_j[t,x]} \|_{\TV})^{\min(p,1)}$$
and thus (since $w_j[t,x] \leq w_j[t/2,x]$)
$$ \E_{(\mu_j)_{w_j[t,x]}} |\phi[x]| \lesssim t (\| (\mu_j)_{w_j[t/2,x]} \|_{\TV} / \| (\mu_j)_{w_j[t,x]} \|_{\TV})^{p}.$$
We conclude that
$$
 \Expect_t(S_1) 
\lesssim A^{O(1)} t^{1-d} \int_{\R^d} \eta(x/R) \vec \mu_{w[t/2,x]}(\vec \Omega)^{\vec p}\ dx.
$$
The expression $S_2$ behaves similarly to $S_1$, except there is an additional factor of $\Sigma_{\vec p}(\varphi)$, which causes \eqref{sodd} to be modified to
\begin{align*}
&\Expect_t(S_2) 
\lesssim A^{O(1)} t^{-d} \int_{B^d(0,2R)} \\
&\quad \vec \mu_{w[t,x]}(\vec \Omega)^{\vec p} \sum_{j,j' \in [d]} (\E_{(\mu_j)_{w_j[t,x]}} |\varphi[x]|^2)^{1/2} (\E_{(\mu_j)_{w_j[t,x]}} |\phi[x]|^2)^{1/2}\ dx.
\end{align*}
By a modification of the preceding arguments we have
$$ (\E_{(\mu_j)_{w_j[t,x]}} |\phi[x]|^2)^{1/2} \lesssim t (\| (\mu_j)_{\tilde w_j[t/2,x]} \|_{\TV} / \| (\mu_j)_{w_j[t,x]} \|_{\TV})^{p}$$
and
$$ (\E_{(\mu_j)_{w_j[t,x]}} |\varphi[x]|^2)^{1/2} \lesssim r_1^{1.2} (\| (\mu_j)_{\tilde w_j[t/2,x]} \|_{\TV} / \| (\mu_j)_{w_j[t,x]} \|_{\TV})^{p}$$
where $\tilde w_j$ is the modification of $w_j$ defined by
$$ \tilde w[t,x] \coloneqq r_1^{-1.2} \gamma_t(\phi[x]) \gamma^{(1)}_{2r_1^{1.2}}(\varphi[x]).$$
We thus have
$$
 \Expect_t(S_2) 
\lesssim A^{O(1)} t^{1-d} r_1^{1.2} \int_{\R^d} \eta(x/R) \vec \mu_{\tilde w[t/2,x]}(\vec \Omega)^{\vec p}\ dx.
$$
From this and \eqref{doke}, we conclude that
$$ t \partial_t Q(t) \geq -A^{O(1)} (tR^{-1} + tr_1^{-1.2}) t^{-d} \int_{B^d(0,2R)}
\vec \mu_{\tilde w[t/2,x]}(\vec \Omega)^{\vec p} \ dx.$$
The expression
$$ t^{-d} \int_{B^d(0,2R)}
\vec \mu_{\tilde w[t/2,x]}(\vec \Omega)^{\vec p}\ dx$$
is basically $Q(t/2)$ (but with the cutoffs $\eta$, $\gamma^{(1)}$ replaced by a slightly larger cutoffs, and with the $\det M$ weight missing). Adapting Lemma \ref{compar} to control this quantity, we find that
$$ t^{-d} \int_{B^d(0,2R)}
\vec \mu_{\tilde w[t/2,x]}(\vec \Omega)^{\vec p}\ dx \lesssim A^{O(1)} \left( t^{-d} \int_{B^d(0,2R)} \vec \mu'[t/2,x](\vec \Omega')^{\vec p}\ dx +  C(R,t/2) \right)$$
while from a routine modification of the proof of Claim (i) one has
$$ t^{-d} \int_{B^d(0,2R)} \vec \mu'[t/2,x](\vec \Omega')^{\vec p}\ dx  \lesssim A^{O(1)} C(R,t/2);$$
also, since $\energy_{t/2}(f) \sim  \energy_t(f)$ for any $f$, one has
$$ C(R,t/2) \sim C(R,t).$$
We thus conclude that
$$ t \partial_t Q(t) \geq -A^{O(1)} (tR^{-1} + tr_1^{-1.2}) C(R,t)$$
and the lemma follows.
\end{proof}

Integrating the above lemma using the fundamental theorem of calculus, one obtains
$$ Q(r_1) \leq Q(r_2) + O(A^{O(1)} \delta \sup_{r_1 \leq t \leq r_2} C(R,t) )$$
and hence by Lemma \ref{compar}
\begin{align*}
& r_1^{-d} \int_{\R^d} \eta_R(x) \int_{(\vec \Omega')^{\vec p}} \det M\ d\vec \mu'[r_1,x]^{\vec p}\ dx\\
&\quad \leq r_2^{-d} \int_{\R^d} \eta_R(x) \int_{(\vec \Omega')^{\vec p}} \det M\ d\vec \mu'[r_2,x]^{\vec p}\ dx
+ O( A^{O(1)} \delta \sup_{r_1 \leq t \leq r_2} C(R,t)).
\end{align*}
From \eqref{ineq} we conclude Claim (iii) as required.  The proof of Theorem \ref{lamrc} is now complete.

\section{Proof of multilinear oscillatory integral estimate}\label{osc-sec}

We now prove Theorem \ref{lmrc-osc}.  Our main tools will be Theorem \ref{curv-kak}, Theorem \ref{lmrc}, and an ``epsilon loss-free'' version of the wave packet decomposition arguments appearing in \cite[Proposition 6.9]{bct}.  As usual, we begin with some basic reductions.  We let implied constants in our asymptotic notation depend on $d$.  The case $q > 2$ follows easily from the $q=2$ case by H\"older's inequality; the cases $q<2$ similarly follow from the $q=2$ case after interpolation with the trivial bound
\begin{equation}\label{chunk}
\| \prod_{j \in [d]} S_\lambda^{(j)} f_j \|_{L^{\infty}(V)} 
\lesssim A^{O(1)} \prod_{j \in [d]} \|f_j \|_{L^1(U_j)}.
\end{equation}
Thus we may assume without loss of generality that $q=2$.  The trivial bound \eqref{chunk} also establishes the $p=\infty$ case of the theorem, so by interpolation we may assume that $p$ is bounded, for instance $p \leq 2$.

As in preceding sections, we set a small parameter
$$ \eps := C_0^{-1} A^{-C_0} \left(d-1-\frac{1}{p}\right)^{C_0} $$
for some large constant $C_0$ (depending only on $d$) to be chosen later.   It suffices to show that
$$
\| \prod_{j \in [d]} S_\lambda^{(j)} f_j \|_{L^{2p}(V_{1/A})} 
\lesssim A^{O(1)} \eps^{-O(1)} \lambda^{-d/2p} \prod_{j \in [d]} \|f_j \|_{L^2(U_{j,1/A})}$$
whenever $f_j \in L^2(U_{j,1/A})$.

By covering $U_{j,1/A}$ and $V_{1/A}$ by $O(\eps^{-O(1)})$ balls of radius $\eps$ and using the triangle inequality, it suffices to establish the estimate
$$
\| \prod_{j \in [d]} S_\lambda^{(j)} f_j \|_{L^{2p}(B^d(x_0,\eps))} 
\lesssim A^{O(1)} \eps^{-O(1)} \lambda^{-d/2p} \prod_{j \in [d]} \|f_j \|_{L^2(B^{d-1}(\xi_j^0,\eps))}$$
whenever $f_j \in L^2(B^{d-1}(\xi_j^0,\eps))$, $\xi_j^0 \in U_{j,1/A}$, and $x_0 \in V_{1/A}$.  We may assume that 
\begin{equation}\label{lambda-large}
\lambda \geq \eps^{-10}
\end{equation}
(say), since the claim follows from \eqref{chunk} otherwise.

As the operators $S_\lambda^{(j)} f_j$ depend linearly on the amplitude function $\psi_j(x,\xi)$, we may reduce to the case when $\psi_j(x_0,\xi_j^0)$ is bounded away from zero, and more specifically that
$$ |\psi_j(x_0,\xi_j^0)| \gtrsim A^{-O(1)}$$
for each $j \in [d]$.  By the regularity of $\psi_j$, we then have
\begin{equation}\label{jxi}
 |\psi_j(x,\xi)| \gtrsim A^{-O(1)}
\end{equation}
for $x \in B^d(x_0,3\eps)$ and $\xi \in B^{d-1}(\xi_j^0, 3\eps)$.

By the submersion hypothesis, each of the matrices $\nabla_x^T \nabla_\xi \Phi_j(x_0,\xi^j_0)$ is of full rank, and has a unit left null vector $n_j^0 \in \R^d$; the transversality hypothesis yields the lower bound
$$ \left| \bigwedge_{j \in [d]} n^j_0 \right| \geq 1/A.$$
By applying a rotation of $\R^d$, we may assume without loss of generality that the vertical components $n_j^0 e_d^T$ of each of the $n_j^0$ is bounded away from zero:
$$ |n_j^0 e_d^T| \gtrsim 1.$$
Indeed, a random rotation will achieve this goal with positive probability.  The left null space of $\nabla_x^T \nabla_\xi \Phi_j(x_0,\xi^j_0)$ now makes an angle of $\gtrsim 1$ with the horizontal space $\R^{d-1}$, and hence if we set $B_j^0 \in \R^{d-1 \times d-1}$ to be the matrix $\nabla_x^T \nabla_\xi \Phi_j(x_0,\xi^j_0)$ with the bottom row removed, then all singular values of $B_j^0$ are $A^{O(1)}$.

We normalise
\begin{equation}\label{norma}
\|f_j \|_{L^2(B^{d-1}(\xi_j^0,\eps))} = 1
\end{equation}
for all $j$.  We introduce the rescaled functions
\begin{equation}\label{fspl}
\begin{split}
F_{j}(x) &\coloneqq S_\lambda^{(j)} f_{j}(\frac{x}{\lambda_j}) \\
&= \int_{B^{d-1}(\xi_j,\eps)} e^{2\pi i \lambda \Phi_j(\frac{x}{\lambda},\xi)} \psi_j(\frac{x}{\lambda},\xi) f_j(\xi) d\xi
\end{split}
\end{equation}
so after rescaling $x$ by $\lambda$, our task is now to show that
\begin{equation}\label{bash}
\| \prod_{j \in [d]} F_{j} \|_{L^{2p}(B^d(\lambda x_0,\eps \lambda))} 
\lesssim A^{O(1)} \eps^{-O(1)}.
\end{equation}

We begin with a local estimate:

\begin{proposition}[Local estimate]\label{local} There is a constant $C_1$ depending only on $d$ such that, for any $x = (x',x_d) \in B^d(\lambda x_0, \eps \lambda)$, one has
\begin{equation}\label{bod}
 \| \prod_{j \in [d]} F_{j} \|_{L^{2p}(B^d(x, A^{-C_1} \sqrt{\lambda}))}
\lesssim A^{O(1)} \eps^{-O(1)} \prod_{j \in [d]} \| F_j(\cdot,x_d) \|_{L^2(B^{d-1}(x', \sqrt{\lambda}))} + A^{O(1)} \eps^{-O(1)} \lambda^{-10d^2}.
\end{equation}
\end{proposition}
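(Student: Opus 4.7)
The plan is to approximate $F_j$ on the small ball $B^d(x, A^{-C_1}\sqrt{\lambda})$ by a constant-coefficient extension operator (via Taylor expansion of the phase in $y$), apply Theorem \ref{lmrc}, and then use a wave-packet decomposition of $f_j$ together with an orthogonality argument on the horizontal slice $\{y_d = x_d\}$ to replace the resulting global input $L^2$-norm by the desired local quantity $\|F_j(\cdot,x_d)\|_{L^2(B^{d-1}(x',\sqrt{\lambda}))}$.

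First, on $B^d(x, A^{-C_1}\sqrt{\lambda})$, Taylor-expand $\lambda\Phi_j(y/\lambda,\xi)$ in $y$ around $y=x$; by \eqref{regular-osc}, the quadratic remainder is bounded by $\tfrac{A}{2\lambda}|y-x|^2 \le A^{1-2C_1}/2$, and $\psi_j(y/\lambda,\xi)-\psi_j(x/\lambda,\xi)=O(A^{1-C_1}/\sqrt{\lambda})$. For $C_1$ large enough (depending only on $d$), these errors contribute at most the $\lambda^{-10d^2}$ term in \eqref{bod}, reducing the problem to a bound for the linearized
$$\tilde F_j(y) \coloneqq \int e^{2\pi i (y-x)\cdot\nabla_x\Phi_j(x/\lambda,\xi)}\,\tilde\psi_j(\xi)\,f_j(\xi)\,d\xi$$
in place of $F_j$ on the small ball. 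The change of variables $\eta = \nabla_{x'}\Phi_j(x/\lambda, \xi)$ is, by the submersion hypothesis together with the earlier normalization (so that $n_j^0 e_d^T \gtrsim 1$), a $C^M$ diffeomorphism from $B^{d-1}(\xi_j^0,\eps)$ onto its image $\Omega_j$, with Jacobian singular values $A^{O(1)}$; setting $h_j(\eta) \coloneqq \partial_{x_d}\Phi_j(x/\lambda, \xi(\eta))$, one identifies $\tilde F_j(y) = \mathcal{E}_j^{(x)}[g_j](y-x)$, where $\mathcal{E}_j^{(x)}$ is the extension operator \eqref{ext-def} attached to the graph of $h_j$ and $g_j(\eta)\coloneqq\tilde\psi_j(\xi(\eta))f_j(\xi(\eta))|J(\eta)|$. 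The hypotheses of Theorem \ref{lmrc} for the family $\{h_j\}$ are inherited from those of Theorem \ref{lmrc-osc}: regularity follows from \eqref{regular-osc} and the implicit function theorem (for $M$ sufficiently large), while transversality of the normals $(-\nabla_\eta h_j(\eta),1)$ to these graphs is equivalent, up to pointwise invertible linear transformation, to the transversality of the left null vectors of $\nabla_x^T\nabla_\xi\Phi_j(x/\lambda,\xi(\eta))$ guaranteed by Theorem \ref{lmrc-osc}(iii).

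The main obstacle is then to apply Theorem \ref{lmrc} in a manner that produces $\|F_j(\cdot,x_d)\|_{L^2(B^{d-1}(x',\sqrt{\lambda}))}$ rather than the global $\|g_j\|_{L^2}\sim\|f_j\|_{L^2}$ on the right-hand side. To this end, I would decompose $f_j=\sum_T c_T\phi_T$ in a Gabor frame of $L^2(B^{d-1}(\xi_j^0,\eps))$ with wave packets $\phi_T$ at scale $1/\sqrt{\lambda}$ in $\xi$. Under the correspondence $f_j\leftrightarrow g_j$, each $\mathcal{E}_j^{(x)}[g_{j,T}]$ is essentially (up to Schwartz tails) supported in a tube of radius $O(\sqrt{\lambda})$ in the direction $n_j(\eta_T)=(-\nabla_\eta h_j(\eta_T),1)$, by standard stationary phase. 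Partition the wave packets into \emph{near} (tubes intersecting $B^d(x,A^{-C_1}\sqrt{\lambda})$) and \emph{far}; the far contribution on the small ball is $O(\lambda^{-10d^2})$ by rapid decay and absorbed into the error. Theorem \ref{lmrc} applied to $\prod_j\mathcal{E}_j^{(x)}[g_{j,\mathrm{near}}]$ then yields
$$\Bigl\|\prod_{j}\mathcal{E}_j^{(x)}[g_{j,\mathrm{near}}]\Bigr\|_{L^{2p}(B^d(x,A^{-C_1}\sqrt{\lambda}))} \le A^{O(1)}\eps^{-O(1)}\prod_j\Bigl(\sum_{T\in\mathrm{near}}|c_T|^2\Bigr)^{1/2}.$$
Finally, each near tube intersects the slice $\{y_d=x_d\}$ in a horizontal disk of radius $O(\sqrt{\lambda})$ centered in $B^{d-1}(x',C\sqrt{\lambda})$ for some absolute constant $C$; the wave-packet traces $F_{j,T}(\cdot,x_d)$ have essentially disjoint Fourier supports at scale $1/\sqrt{\lambda}$ on this slice, so Bessel's inequality applied to the wave-packet frame localized near $(x',x_d)$ gives
$$\sum_{T\in\mathrm{near}}|c_T|^2 \lesssim A^{O(1)}\|F_j(\cdot,x_d)\|_{L^2(B^{d-1}(x',C\sqrt{\lambda}))}^2 + (\text{rapidly decaying tails}).$$
A standard covering of $B^{d-1}(x',C\sqrt{\lambda})$ by translates of $B^{d-1}(\cdot,\sqrt{\lambda})$ absorbs the constant $C$ at the cost of an extra $A^{O(1)}$ factor, yielding \eqref{bod}.
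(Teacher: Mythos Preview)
Your linearisation step contains a genuine gap. The quadratic Taylor remainder in the phase is indeed $O(A^{1-2C_1})$, which is a small \emph{constant} but does not decay in $\lambda$. Consequently the pointwise error $|F_j(y)-\tilde F_j(y)|$ is only $O(A^{1-2C_1})$, and after taking products and the $L^{2p}$ norm over a ball of volume $\sim (A^{-C_1}\sqrt{\lambda})^d$ you pick up a factor $\lambda^{d/4p}$; the resulting error term is of order $A^{-O(C_1)}\lambda^{d/4p}$, which \emph{grows} in $\lambda$ and cannot be absorbed into the $\lambda^{-10d^2}$ term (nor into the first term on the right, since there is no lower bound on the local $L^2$ norms). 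The same remark applies to the amplitude error $\psi_j(y/\lambda,\xi)-\psi_j(x/\lambda,\xi)$. The paper does \emph{not} discard these terms: it instead keeps the quadratic phase and the amplitude variation together as a slowly varying modified amplitude $\tilde\psi_j(y,\xi)=e^{2\pi i\Psi_j(y,\xi)}\psi_j(y/\lambda,\xi)$ on the ball, observes that $\nabla_y^{\otimes m}\tilde\psi_j=O_m(\lambda^{-m/2})$, and then performs a Fourier series expansion of $\tilde\psi_j$ in $y$ (at scale $\sqrt{\lambda}$) to write $F_j$ as a rapidly convergent sum of genuine extension operators, to each of which Theorem~\ref{lmrc} applies. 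This Fourier-series step is essential and is the piece your argument is missing.

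Your second step---localising the right-hand side from $\|g_j\|_{L^2}$ to $\|F_j(\cdot,x_d)\|_{L^2(B^{d-1}(x',\sqrt\lambda))}$ via a wave-packet frame---is in spirit close to what is needed, but the paper uses a rather different device: a pigeonholing argument finds a scale $r_j\in[A^{-C_1/2}\sqrt\lambda,\,2A^{-C_1/2}\sqrt\lambda]$ at which the smoothed local energy $E_j(r)$ satisfies $E_j(r_j+\lambda^{0.49})\le 2E_j(r_j)+\lambda^{-20d^2}$, and then a single physical-space cutoff $\chi_{r_j}$ (with Fourier support in $B^{d-1}(0,\eps)$) splits $f_j=f'_j+f''_j$. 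The pigeonholed scale is exactly what allows one to bound $\|f'_j\|_{L^2}$ by $\|F_j(\cdot,x_d)\|_{L^2(B^{d-1}(x',\sqrt\lambda))}$ plus acceptable error, via a further decomposition $f_j=\tilde f'_j+\tilde f''_j$ at the slightly larger scale $r_j+\lambda^{0.49}$. Your frame/Bessel argument may be salvageable, but note two issues: the inequality you want is a \emph{lower} frame bound (Bessel goes the other way), so you must argue carefully that far packets contribute negligibly to $\|F_j(\cdot,x_d)\|_{L^2(B')}$ and that near packets are almost orthogonal on the slice; and your final ``covering of $B^{d-1}(x',C\sqrt\lambda)$'' step goes in the wrong direction---you cannot bound an $L^2$ norm over a large ball by one over a smaller ball. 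The correct fix is to make the wave-packet radius itself $A^{-C_2}\sqrt\lambda$ so that near packets already land in $B^{d-1}(x',\sqrt\lambda)$.
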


\begin{proof}  Let $C_1$ be a sufficiently large constant depending on $d$ to be chosen later. Our main tool will be the multilinear restriction estimate in Theorem \ref{lmrc}, but first it will be convenient to normalise the phase functions $\Phi_j$ and the amplitude functions $\psi_j$ in various ways.

By translating the $x$ variable (and adjusting $A$ slightly if necessary) we may normalise $x = (x',x_d)=0$ (so that $x_0 = O(\eps)$).  By subtracting $\Phi_j(0,\xi)$ from $\Phi_j(x,\xi)$, and multiplying each $f_j(\xi)$ by $e^{i\lambda \Phi_j(0,\xi)}$ to compensate, we may also normalise $\Phi_j(0,\xi)=0$ for all $\xi$.  For $\xi \in B^{d-1}(\xi^0_j, \eps)$, the map $\xi \mapsto \nabla_{x'} \Phi_j(0,\xi)$ has derivative
$\nabla_\xi^T \nabla_{x'} \Phi_j(0,\xi) \in \R^{d-1 \times d-1}$ equal to $(B_j^0)^T + O( A^{O(1)} \eps )$, so in particular it has singular values $A^{O(1)}$; here of course $\nabla_{x'} = (\partial_{x_1},\dots,\partial_{x_{d-1}})$ is the initial segment of $\nabla_x = (\partial_{x_1},\dots,\partial_{x_d})$.  By the inverse function theorem and the chain rule, this map $\xi \mapsto \nabla_{x'}^T \Phi_j(0,\xi)$ is then a diffeomorphism on $B^{d-1}(\xi^0_j, A^C \eps)$ for any fixed constant $C$, with both this map and its inverse having all derivatives up to $m^{\mathrm{th}}$ order bounded by $A^{O_m(1)}$ if $M$ is sufficiently large depending on $m$.  Applying this change of variables in the $\xi$ variable (and adjusting the amplitude $\psi_j$ and the base frequencies $\xi_j^0$ accordingly, as well as adjusting $A$), we may then assume without loss of generality that 
$$ \nabla_{x'} \Phi_j(0,\xi) = \xi$$
for all $\xi \in B^{d-1}(\xi^0_j,3\eps)$.

From \eqref{jxi}, we may divide $\psi_j(x,\xi)$ by $\psi_j(0,\xi)$, and multiply $f_j(\xi)$ by $\psi_j(0,\xi)$ to compensate, to arrive at the additional normalisation
\begin{equation}\label{psi-norm}
 \psi_j(0,\xi) = 1
\end{equation}
for all $\xi \in B^{d-1}(\xi^0_j,3\eps)$, without affecting any of the previous normalisations (after adjusting $A$ accordingly).

By Taylor expansion (or two applications of the fundamental theorem of calculus), we then have
\begin{align*}
 \lambda \Phi_j(\frac{y}{\lambda},\xi) &= \Phi_j(0,\xi) + y \nabla_x^T \Phi_j(0,\xi) + \Psi_j(y,\xi) \\
&= y' \xi^T + y_d h_j(\xi) + \Psi_j( y, \xi )
\end{align*}
for any $y = (y',y_d) \in B^d(0, \sqrt{\lambda})$, where 
$$ h_j(\xi) \coloneqq \partial_{x_d} \Phi_j(0,\xi)$$
and
$$ \Psi_j(y,\xi) \coloneqq \lambda^{-1} \int_0^1 \int_0^1  ((y \nabla_x^T) (y \nabla_x^T) \Phi_j)( \frac{uvy}{\lambda}, \xi)\ du dv $$
and $y\nabla_x^T$ is the directional derivative in the $y$ direction.  Therefore we can write
\begin{equation}\label{fjt}
 F_{j}(y',y_d) = \int_{\R^{d-1}} e^{2\pi i (y' \xi^T + y_d h_j(\xi))} \tilde \psi_j(y,\xi) f_j(\xi) d\xi
\end{equation}
where the modified amplitude function $\tilde \psi_j$ is defined by
$$ \tilde \psi_j(y,\xi) \coloneqq e^{2\pi i \Psi_j(y,\xi)} \psi_j(\frac{y}{\lambda},\xi)\ d\xi.$$
From the derivative bounds on $\Phi_j$, we have
\begin{equation}\label{hand}
 \nabla_\xi^{\otimes m} h_j(\xi) \lesssim_m A^{O_m(1)}
\end{equation}
for any $m$ and any $\xi \in B^d(\xi_j^0,3\eps)$, if $M$ is sufficiently large depending on $m$.  By construction, the vector $n_j(\xi) \coloneqq (-\nabla h_j(\xi), 1)$ is in the left null space of $\nabla_x^T \nabla_\xi \Phi_j(0, \xi)$, and so we also have the transversality property
$$ \left| \bigwedge_{j \in [d]} n_j(\xi_j) \right| \gtrsim A^{-O(1)}$$
whenever $\xi_j \in B^d(\xi_j^0, 3\eps)$.

For $y \in B^d(0,\sqrt{\lambda})$, the derivative bounds on $\Phi_j$ imply that
$$ \Psi_j(y,\xi) \lesssim A^{O(1)} \lambda^{-1} |y|^2 \lesssim A^{O(1)} $$
and more generally
$$ \nabla_y^{\otimes m} \otimes \nabla_\xi^{\otimes m'} \Psi_j(y,\xi) \lesssim_{m,m'} \lambda^{-m/2} A^{O_{m,m'}(1)}$$
for any $m,m'$, if $M$ is sufficiently large depending on $m,m'$.  By the chain rule and product rule, and the derivative bounds on $\psi_j$, we conclude that
\begin{equation}\label{inter}
 \nabla_y^{\otimes m} \otimes \nabla_\xi^{\otimes m'} \tilde \psi_j(y,\xi) \lesssim_{m,m'} \lambda^{-m/2} A^{O_{m,m'}(1)}
\end{equation}
for any $m,m'$, if $M$ is sufficiently large depending on $m,m'$.  
Thus the amplitude function $\tilde \psi_j$ does not vary too wildly for $y \in B^d(0,\sqrt{\lambda})$ and $\xi \in B^{d-1}(\xi_j^0, \eps)$, and the representation \eqref{fjt} is approximately representing $F_{j}$ as an extension operator.  Note from \eqref{psi-norm} that we have the normalisation
\begin{equation}\label{tpsi-norm}
\tilde \psi_j(0,\xi) = 1
\end{equation}
for $\xi\in B^{d-1}(\xi^0_j, 2\eps)$.

It is tempting to now apply Theorem \ref{lmrc} (after using some sort of decomposition of the amplitude $\tilde \psi_j(y,\xi)$, but this will not achieve the required localisation of the $L^2$ norms of $F_j$ in the right-hand side of \eqref{local}.  To achieve this, we need an appropriate decomposition of $f_j$, at a scale that we will select by a pigeonholing argument.  For every scale $r$ between (say) $A^{-C_1/2} \sqrt{\lambda}$ and $2 A^{-C_1/2} \sqrt{\lambda}$, define the cutoff $\chi_r\colon \R^{d-1} \to \R$ by the convolution
\begin{equation}\label{chir}
 \chi_r(y') \coloneqq \int_{\R^{d-1}} 1_{B^{d-1}(0, r)}( y' + z' / \eps ) \varphi(z')^2\ dz',
\end{equation}
where $\varphi$ was defined in Section \ref{notation-sec}; this non-negative function takes values between $0$ and $1$, is monotone non-decreasing in $r$, and is a smoothed out version of $1_{B^{d-1}(0, r)}$ that has Fourier transform supported in $B^{d-1}(0,\eps)$.  For a given $j \in [d]$, we then consider the smoothed out local energies
\begin{equation}\label{ej-def}
 E_j(r) \coloneqq \int_{\R^{d-1}} |\check f_j(y')|^2 \chi_r(y')^2\ dy'.
\end{equation}
The $E_j(r)$ are monotone non-decreasing in $r$, and by \eqref{norma} and Plancherel's theorem, they lie in $[0,1]$.  We can therefore find a scale $r_j \in [A^{-C_1/2}\sqrt{\lambda}, 2A^{-C_1/2}\sqrt{\lambda}]$ with the property that
\begin{equation}\label{raj}
 E_j(r_j + \lambda^{0.49}) \leq 2E_j(r_j) + \lambda^{-20d^2},
\end{equation}
since if this inequality fails for all $r_j$ in this range, an iteration involving $O(\log \lambda)$ applications of the failure of this inequality starting at $r_j = A^{-C_1/2}\sqrt{\lambda}$ and advancing in steps of $\lambda^{0.49}$ will force $E_j$ to exceed $1$ at some point, giving a contradiction.

Henceforth we fix $r_j$ for which \eqref{raj} holds; the usefulness of this bound will become apparent at the end of the argument.  We then decompose $f_j = f'_j + f''_j$, where
\begin{equation}\label{fjr}
 f'_j(\xi) \coloneqq f_j * \hat \chi_{r_j}(\xi) = \int_{B^{d-1}(0,\eps)} f_j(\xi-\eta) \hat \chi_{r_j}(\eta)\ d\eta
\end{equation}
and $f''_j \coloneqq f_j - f'_j$.  Both $f'_j$ and $f''_j$ are supported on $B^{d-1}(\xi_j^0,2\eps)$.  Then we have $F_j = F'_j + F''_j$, where
$$
F'_{j}(y',y_d) \coloneqq \int_{\R^{d-1}} e^{2\pi i (y' \xi^T + y_d h_j(\xi))} \tilde \psi_j(y,\xi) f'_j(\xi) d\xi
$$ 
and
$$
F''_{j}(y',y_d) \coloneqq \int_{\R^{d-1}} e^{2\pi i (y' \xi^T + y_d h_j(\xi))} \tilde \psi_j(y,\xi) f''_j(\xi) d\xi.
$$ 
We claim the pointwise bound
\begin{equation}\label{hong}
 F''_j(y',y_d) \lesssim A^{O(C_1)} \eps^{-O(1)} \lambda^{-20d^2}
\end{equation}
for all $(y',y_d) \in B^d(0, A^{-C_1} \sqrt{\lambda})$.  Indeed, we may expand the left-hand side as
$$
\int_{\R^{d-1}} \int_{\R^{d-1}} e^{2\pi i ((y'-z') \xi^T + y_d h_j(\xi))} \tilde \psi_j(y,\xi) \eta'_{2\eps}(\xi) d\xi \hat f_j(z') (1-\chi_{r_j}(z'))\ dz'$$
where the cutoffs $\eta'_{2\eps}$ were defined in Section \ref{notation-sec}.
In the region $|z'| \leq r_j/2$, one sees from \eqref{chir} and the properties of $\varphi$ that $1-\chi_{r_j}(z') \lesssim \lambda^{-40d^2}$, so the contribution of this region is acceptable.  In the region $|z'| > r_j/2$, we have $|y'-z'| \gtrsim |z'| \gtrsim A^{-C_1/2} \sqrt{\lambda}$, so by \eqref{hand} we have the lower bound
$$ |\nabla_\xi ( (y'-z') \xi^T + y_d h_j(\xi) )| \gtrsim |z'|$$
for $(y',y_d) \in B^d(0,A^{-C_1} \sqrt{\lambda})$. Applying repeated integration by parts in $\xi$ using \eqref{inter}, \eqref{hand}, we then see that 
$$\int_{\R^{d-1}} e^{2\pi i ((y'-z') \xi^T + y_d h_j(\xi))} \tilde \psi_j(y,\xi) \eta'_{2\eps}(\xi) d\xi \lesssim A^{O(C_1)} \eps^{-O(1)} \langle z' \rangle^{-60 d^2}$$
and from this and \eqref{norma} we see that this contribution is also acceptable.

From the triangle inequality (and crude bounds on $F_j$ using \eqref{norma}) one now has
$$\prod_{j \in [d]} F_{j}  = \prod_{j \in [d]} F'_{j} + O( A^{O(C_1)} \eps^{-O(1)} \lambda^{-150d^2} )$$
(say) on $B^d(0,A^{-C_1} \sqrt{\lambda})$, so we may replace $F_j$ by $F'_j$ in the left-hand side of \eqref{bod}.

Next, by performing a Fourier series expansion in $y$ and using \eqref{inter}, we may decompose
$$ \tilde \psi_j(y,\xi) = \sum_{k \in \Z^d} e^{2\pi i yk^T / \sqrt{\lambda}} \tilde \psi_{j,k}(\xi) $$
for $y \in B^d(0,A^{-C_1} \sqrt{\lambda})$ and $\xi \in B^{d-1}(\xi_j,3\eps)$, where the Fourier coefficients $\tilde \psi_{j,k}(\xi)$ obey the pointwise bounds
\begin{equation}\label{mong}
 \tilde \psi_{j,k}(\xi) \lesssim A^{O(1)} \langle k \rangle^{-10d^3}.
\end{equation}
In particular there is no difficulty justifying convergence of the series in $k$.  As a consequence, we may decompose $F'_j$ into extension operators:
$$ F'_j(y) = \sum_{k \in \Z^d} e^{2\pi i yk^T / \sqrt{\lambda}} {\mathcal E}_j( \tilde \psi_{j,k} f'_j )(y)$$
and hence we have the pointwise bound
$$ |\prod_{j \in [d]} F'_j| \leq \sum_{k_1,\dots,k_d \in \Z^d} \prod_{j \in [d]} |{\mathcal E}_j( \tilde \psi_{j,k_j} f'_j )|.$$
We can take $L^{2p}$ norms using \eqref{quasi} to conclude that
$$ \|\prod_{j \in [d]} F'_j \|_{L^{2p}(B^d(x, A^{-C_1} \sqrt{\lambda}))}
\leq (\sum_{k_1,\dots,k_d \in \Z^d} \| \prod_{j \in [d]} {\mathcal E}_j( \tilde \psi_{j,k_j} f'_j ) \|_{L^{2p}(B^d(x, A^{-C_1} \sqrt{\lambda}))}^{\min(2p,1)})^{1/\min(2p,1)}.$$
Applying Theorem \ref{lmrc} and \eqref{mong}, we conclude that
$$ \|\prod_{j \in [d]} F'_j \|_{L^{2p}(B^d(x, A^{-C_1} \sqrt{\lambda}))} \lesssim A^{O(C_1)} \eps^{-O(1)} \prod_{j \in [d]} \|f'_j \|_{L^2(\R^{d-1})}.$$
To conclude the proof of the proposition, it suffices by \eqref{norma} to show that
\begin{equation}\label{ham}
 \|f'_j \|_{L^2(\R^{d-1})} \lesssim  \| F_j(\cdot,0) \|_{L^2(B^{d-1}(0, \sqrt{\lambda}))} + A^{O(1)} \eps^{-O(1)} \lambda^{-20d^2}
\end{equation}
(say).  From \eqref{fjt} one has
$$ F_j(y,0) = \check f_j(\xi) + \int_{\R^{d-1}} e^{2\pi i y' \xi^T} (\tilde \psi_j((y',0),\xi)-1) f_j(\xi)\ d\xi.$$
Meanwhile, from \eqref{fjr}, \eqref{ej-def} and Plancherel's theorem one has
\begin{equation}\label{fjp}
\|f'_j \|_{L^2(\R^{d-1})} = \| \check f_j \chi_{r_j} \|_{L^2(\R^{d-1})} = E_j(r_j)^{1/2}.
\end{equation}
From the triangle inequality we conclude that
$$ \|f'_j \|_{L^2(\R^{d-1})} \leq \| F_j(\cdot,0) \chi_r \|_{L^2(\R^{d-1})}
+ \| \chi_{r_j}(y') \int_{\R^{d-1}} e^{2\pi i y' \xi^T} (\tilde \psi_j((y',0),\xi)-1) f_j(\xi)\ d\xi \|_{L^2(\R^{d-1})}$$
where the final $L^2$ norm is with respect to the $y'$ variable.  From the rapid decay of $\chi_r$ outside of $B^d(0,2r)$ and \eqref{norma} we have
$$ \| F_j(\cdot,0) \chi_r \|_{L^2(\R^{d-1})}\lesssim  \| F_j(\cdot,0) \|_{L^2(B^{d-1}(0, \sqrt{\lambda}))} + A^{O(1)} \eps^{-O(1)} \lambda^{-20d^2}
$$ 
so to establish \eqref{ham}, it suffices to show that
\begin{align*}
&\left\| \chi_{r_j}(y') \int_{\R^{d-1}} e^{2\pi i y' \xi^T} (\tilde \psi_j((y',0),\xi)-1) f_j(\xi)\ d\xi \right\|_{L^2(\R^{d-1})}\\
&\quad \lesssim A^{-C_1/2} E_j(r_j + \lambda^{0.49})^{1/2}  + A^{O(1)} \eps^{-O(1)} \lambda^{-20d^2}.
\end{align*}
From \eqref{tpsi-norm} and the fundamental theorem of calculus we have
$$ \tilde \psi_j((y',0),\xi)-1 = \int_0^1 ((y'\nabla^T_{y'}) \tilde \psi_j)((uy',0),\xi)\ du$$
where $y' \nabla^T_{y'}$ is the directional derivative in the $y$ direction; thus by Minkowski's inequality it suffices to show that
$$ 
\| \chi_{r_j}(y') \int_{\R^{d-1}} e^{2\pi i y' \xi^T} \tilde \psi_{j,u}(y',\xi) f_j(\xi)\ d\xi \|_{L^2(\R^{d-1})}
\lesssim A^{-C_1/2 + O(1)} E_j(\tilde r_j)^{1/2}  + A^{O(1)} \eps^{-O(1)} \lambda^{-20d^2}$$
for all $u\in [0,1]$, where $\tilde r_j \coloneqq r_j + \lambda^{0.49}$ and
$$ \tilde \psi_{j,u}(y',\xi) \coloneqq ((y' \nabla_{y'}^T) \tilde \psi_j)((uy',0),\xi).$$
We can form a decomposition $f_j = \tilde f'_j + \tilde f''_j$ by replacing $r_j$ by $\tilde r_j$ in \eqref{fjr}, thus
$$ \tilde f'_j(\xi) \coloneqq f_j * \hat \chi_{\tilde r_j}(\xi) = \int_{B^{d-1}(0,\eps)} f_j(\xi-\eta) \hat \chi_{\tilde r_j}(\eta)\ d\eta
$$
and $\tilde f''_j \coloneqq f_j - \tilde f'_j$.  The same integration by parts argument that showed \eqref{hong} also shows (with minor modifications) that 
$$ 
\| \chi_{r_j}(y') \int_{\R^{d-1}} e^{2\pi i y' \xi^T} \tilde \psi_{j,u}(y',\xi) \tilde f''_j(\xi)\ d\xi \|_{L^2(\R^{d-1})}
\lesssim A^{O(1)} \eps^{-O(1)} \lambda^{-20d^2},$$
the key point being that up to negligible errors, $\chi_{r_j}$ and $1 - \chi_{\tilde r_j}$ are separated from each other by a distance $\gtrsim \lambda^{0.49}$.   Thus it will suffice to show that
$$ 
\| \chi_{r_j}(y') \int_{\R^{d-1}} e^{2\pi i y' \xi^T} \tilde \psi_{j,u}(y',\xi) \tilde f'_j(\xi)\ d\xi \|_{L^2(\R^{d-1})}
\lesssim A^{-C_1/2 + O(1)} E_j(\tilde r_j)^{1/2}.$$
By repeating the proof of \eqref{fjp} one has
$$ E_j(\tilde r_j)^{1/2} = \| \tilde f'_j \|_{L^2(\R^{d-1})}.$$
The function $\chi_{r_j}$ is extremely small (e.g., of size $O(\lambda^{-10d})$ outside of the ball $B^{d-1}(0, 2r_j)$, and bounded otherwise, so we may dominate it by $\eta'_{2r_j}$ up to negligible error.  By squaring, it then suffices to show that
\begin{equation}\label{schur} 
\int_{\R^{d-1}} \int_{\R^{d-1}} K(\xi_1,\xi_2) \tilde f'_j(\xi_1) \overline{\tilde f'_j(\xi_2)}\ d\xi_1 d\xi_2
\lesssim A^{-C_1+O(1)} \|\tilde f'_j \|_{L^2(\R^{d-1})}^2
\end{equation}
where the kernel $K(\xi_1,\xi_2)$ is given by
$$
K(\xi_1,\xi_2) \coloneqq \int_{\R^{d-1}} \eta'_{2r_j}(y')^2 e^{2\pi i y' (\xi_1-\xi_2)^T} \tilde \psi_{j,u}(y',\xi_1) \overline{\tilde \psi_{j,u}(y',\xi_2)}\ dy'.$$
For $\xi \in B^{d-1}(\xi_j^0,3\eps)$ and $y' \in B^{d-1}(0,3r_j)$, we have from \eqref{inter}
$$
 \nabla_{y'}^{\otimes m} \tilde \psi_{j,u}(y',\xi) \lesssim_m A^{-C_1/2 + O_m(1)} r_j^{-m} $$
for any $m$, if $M$ is sufficiently large depending on $m$.  From this and repeated integration by parts, we obtain the bounds
$$ K(\xi_1,\xi_2) \lesssim A^{-C_1+O(1)} r_j^{d-1} \rho_{1/r_j}(\xi_1-\xi_2) + A^{O(1)} \lambda^{-10d} $$
(say), and the claim \eqref{schur} now follows from Schur's test (or Young's inequality).
\end{proof}

Let $C_1$ be as in the above proposition; henceforth implied constants are allowed to depend on $C_1$.  From Fubini's theorem we have
$$
\| \prod_{j \in [d]} S_\lambda^{(j)} f_j \|_{L^{2p}(B^d(\lambda x_0,\eps \lambda))} 
\lesssim A^{O(1)} \lambda^{-\frac{d}{4p}}
\| \| \prod_{j \in [d]} S_\lambda^{(j)} f_j \|_{L^{2p}(B^d(x,A^{-C_1} \sqrt{\lambda}))} \|_{L^{2p}(B^d(\lambda x_0,2\eps \lambda))} $$
(where the outer $L^{2p}$ norm on the right-hand side is with respect to the $x$ variable), so to show \eqref{bash}, it suffices by Proposition \ref{local} to show that
$$
\| \prod_{j \in [d]} e_j \|_{L^{p}(B^d(\lambda x_0,2\eps \lambda))} \lesssim A^{O(1)} \eps^{-O(1)} \lambda^{\frac{d}{2p}}$$
where $e_j$ is the local energy density
\begin{equation}\label{ejo}
 e_j(x',x_d) \coloneqq \int_{\R^{d-1}} \eta'_{x',2\sqrt{\lambda}}( y') |F_j(y',x_d)|^2\ dy'.
\end{equation}
To estimate these densities, we perform a Gabor-type decomposition of $f_j$. Let $\varphi$ be the function defined in Section \ref{notation-sec}, then
$$ f_j(\xi) = \lambda^{\frac{d-1}{2}} \int_{\R^{d-1}} \varphi_{\zeta,\sqrt{\lambda}}(\xi)^2 f_j(\xi)\ d\zeta$$
and hence by Fourier inversion
$$ f_j(\xi) = \lambda^{\frac{d-1}{4}} \int_{\R^{d-1}} \int_{\R^{d-1}} e^{-2\pi i z \xi^T} \varphi_{\zeta,\sqrt{\lambda}}(\xi) g_j(\zeta,z)\ dz d\zeta$$
where
$$ g_j(\zeta,z) \coloneqq \lambda^{\frac{d-1}{4}} \int_{\R^{d-1}} e^{2\pi i z \xi^T} \varphi_{\zeta,\sqrt{\lambda}}(\xi) f_j(\xi)\ d\xi$$
is a Gabor-type transform of $f_j$.  Note that $g_j(\zeta,z)$ vanishes unless $\zeta \in B(\xi_j^0, 2\eps)$, and from Plancherel's theorem and \eqref{norma} one has
\begin{equation}\label{gabor}
\begin{split}
\int_{\R^{d-1}} \int_{\R^{d-1}} |g_j(\zeta,z)|^2\ dz d\zeta
&= 
\lambda^{\frac{d-1}{2}} \int_{\R^{d-1}} \int_{\R^{d-1}} \varphi_{\zeta,\sqrt{\lambda}}(\xi)^2 |f_j(\zeta)|^2\ d\xi d\zeta \\
&= \int_{\R^{d-1}} |f_j(\xi)|^2\ d\xi \\
&= 1.
\end{split}
\end{equation}
For any $x \in B^d(\lambda x_0, 3\eps \lambda)$, one has from \eqref{fspl} that
$$ F_j(x) = 
\lambda^{\frac{d-1}{4}} \int_{\R^{d-1}} \int_{\R^{d-1}} \int_{\R^{d-1}} e^{2\pi i (\lambda \Phi_j(\frac{x}{\lambda},\xi) - z \xi^T)} \psi_j(\frac{x}{\lambda},\xi)  \varphi_{\zeta,\sqrt{\lambda}}(\xi) g_j(\zeta,z)\ d \xi dz d\zeta$$
and hence by \eqref{ejo}
$$ e_j(x) = \int_{\R^{d-1}} \int_{\R^{d-1}} \int_{\R^{d-1}} \int_{\R^{d-1}} K_{j,x}( \zeta_1,z_1,\zeta_2,z_2) g_j(\zeta_1,z_1) \overline{g_j(\zeta_2,z_2)}  \ dz_1 d\zeta_1 dz_2 d\zeta_2$$
where the kernel $K_{j,x}$ is given by
\begin{align*}& K_{j,x}(\zeta_1,z_1,\zeta_2,z_2) \coloneqq \lambda^{\frac{d-1}{2}} \int_{\R^{d-1}} \int_{\R^{d-1}}
\int_{\R^{d-1}}\\
&\quad  e^{2\pi i \Sigma} \psi_j\left(\frac{(y',x_d)}{\lambda},\xi_1\right)  \varphi_{\zeta_1,\sqrt{\lambda}}(\xi_1) \overline{\psi_j}(\frac{(y',x_d)}{\lambda},\xi_2)  \varphi_{\zeta_2,\sqrt{\lambda}}(\xi_2)\ d\xi_1 d\xi_2 dy' 
\end{align*}
and $\Sigma$ is the phase
$$ \Sigma \coloneqq \lambda \Phi_j\left(\frac{(y',x_d)}{\lambda},\xi_1\right) - z_1 \xi_1^T - \lambda \Phi_j\left(\frac{(y',x_d)}{\lambda},\xi_2\right) + z_2 \xi_2^T.$$
Observe that on the support of the integrand, one has the derivative estimates
$$ \nabla_{\xi_1} \Sigma = \lambda \nabla_\xi \Phi_j( \frac{x}{\lambda}, \zeta_1 ) - z_1 + O( A^{O(1)} \lambda^{1/2} )$$
and
$$ \nabla_{\xi_2} \Sigma = -\lambda \nabla_\xi \Phi_j( \frac{x}{\lambda}, \zeta_2 ) + z_2 + O( A^{O(1)} \lambda^{1/2} )$$
and (by the submersion property)
$$ |\nabla_{y'} \Sigma| \gtrsim A^{-O(1)} |\zeta_1 - \zeta_2| - O( A^{O(1)} \lambda^{-1/2} )$$
After many integrations by parts using all the derivative bounds on $\Phi, \eta, \psi_j, \varphi$, we conclude the kernel bounds
$$ K_{j,x}(\zeta_1,z_1,\zeta_2,z_2) \lesssim A^{O(1)} \prod_{i=1,2} \rho_{\lambda^{-1/2}}( \nabla_\xi \Phi_j( \frac{x}{\lambda}, \zeta_i ) - \lambda^{-1} z_i )^2 \rho_{\lambda^{-1/2}}( \zeta_1 - \zeta_2 )$$
(say) and thus by Schur's test
$$ e_j(x) \lesssim A^{O(1)} \int_{\R^{d-1}} \int_{\R^{d-1}} \rho_{\lambda^{-1/2}}( \nabla_\xi \Phi_j( \frac{x}{\lambda}, \zeta ) - \lambda^{-1} z )^2 |g_j(\zeta,z)|^2\ d\zeta dz.$$

If we set $\Omega_j \coloneqq \R^{d-1} \times \R^{d-1}$ (parameterised by $(\zeta,z)$) with measure
$$ d\mu_j \coloneqq |g_j(\zeta,z)|^2\ d\zeta dz$$
and for each $x \in B^{d-1}(x_0, 3\eps)$ we let $\phi_j[x]\colon \Omega_j \to \R^{d-1}$ denote the map
$$ \phi_j[x](\zeta,z) \coloneqq \nabla_\xi \Phi_j( x, \zeta ) - \lambda^{-1} z$$
then after rescaling $x$ by $\lambda$, we conclude that
\begin{equation}\label{sp}
\| \prod_{j \in [d]} e_j \|_{L^{p}(B^d(\lambda x_0,2\eps \lambda))} \lesssim A^{O(1)} \lambda^{\frac{d}{p}}
\| \prod_{j \in [d]} \int_{\Omega_j} \rho_{\lambda^{-1/2}}( \phi_j[x](\omega_j)  )\ d\mu(\omega_j) \|_{L^p(B^{d-1}(x_0,2\eps))}.
\end{equation}
Set $t := \lambda^{-1/2}$.  We can bound
$$ \rho_{\lambda^{-1/2}}( \phi_j[x](\omega_j) ) \lesssim \sum_{k \in \Z^d} \langle k \rangle^{-10d^2} 
1_{B^{d-1}(0,t)}( \phi_j[x](\omega_j) - C^{-1} k \lambda^{-1/2} ) $$
for some constant $C$ depending only on $d$, and so by the quasi-triangle inequality \eqref{quasi} we can bound the right-hand side of \eqref{sp} by
$$
(\sum_{k_1,\dots,k_d \in \Z^d} (\langle k \rangle^{-10d^2}
\| \prod_{j \in [d]} \int_{\Omega_j} 1_{B^{d-1}(0,t)}( \phi_j[x](\omega_j) - C^{-1} k_j \lambda^{-1/2} ) \|_{L^p(B^{d-1}(x_0,2\eps))})^{\min(p,1)})^{1/\min(p,1)}.$$
By the hypotheses of Theorem \ref{lmrc-osc}, the maps $(x,\omega_j) \mapsto \phi_j[x](\omega_j) - C^{-1} k_j \lambda^{-1/2}$ obey the hypotheses of Theorem \ref{curv-kak} uniformly in $k_j$.  Applying that theorem and using the definition of $\eps$, we conclude that
$$
\| \prod_{j \in [d]} e_j \|_{L^{p}(B^d(\lambda x_0,2\eps \lambda))} \lesssim A^{O(1)} \eps^{-O(1)} \lambda^{\frac{d}{p}} t^{\frac{d}{p}} \prod_{j \in [d]} \mu_j(\Omega_j)$$
and the claim follows from \eqref{gabor} and the definition of $t$.

\appendix

\section{Erratum to a previous paper}\label{erratum}

In this appendix we disclose a small gap in the arguments in \cite{bct}, and specifically in the proof of \cite[Theorem 1.16]{bct} in the case $q/d < 1$ (which only occurs in four and higher dimensions $d \geq 4$).  We thank Jon Bennett for discussions regarding this issue, which was also independently discovered by Ciprian Demeter.

The issue lies with the proof of \cite[Lemma 2.2]{bct}.  In that paper, it is asserted that this lemma is proven in exactly the same fashion as \cite[Proposition 4.3]{tvv}.  While one of the two implications in this lemma is not problematic, in the other implication, an application of H\"older's inequality is used to control an integral by its $L^{q/d}$ norm, and this is only justified when $q/d \geq 1$.  This is not an issue in \cite{tvv}, or in \cite{bct} in dimensions up to three, but creates a gap in four and higher dimensions.

There are several resolutions to this problem.  If one is willing to impose sufficient amounts of regularity on the hypersurface beyond $C^2$, one can use the more complicated arguments in \cite[\S 6]{bct} (which were explicitly designed to avoid the use of \cite[Lemma 2.2]{bct}), or the alternate proof of multilinear restriction in \cite{bejenaru}; one can also use Theorem \ref{lmrc} from the current paper.  If one insists on only assuming $C^2$ regularity, another fix was given in \cite[\S 4]{aspects}, in which the induction on scales was performed with the restriction constant ${\mathcal C}_{\operatorname{Rest}}(R)$ replaced by a related constant ${\mathcal C}_{\operatorname{Rest}}(R)$, and valid implication in \cite[Lemma 2.2]{bct} can be used to conclude.  An alternative fix (which is basically equivalent to the previous one) is to generalise \cite[Theorem 1.16]{bct} to a vector-valued setting, in which the functions $g_j$ are now assumed to take values in an arbitrary Hilbert space $H_j$, and the expression $\prod_{j=1}^d {\mathcal E}_j g_j$ in the left-hand side is now replaced by $\prod_{j=1}^d |{\mathcal E}_j g_j|$; similar modifications are made to the definition of ${\mathcal R}^*(2 \times \dots \times 2 \to q; \alpha)$ and to \cite[Lemma 2.2]{bct}.  One can verify that all of the arguments in \cite[\S 2]{bct} extend to this vector-valued setting.  Furthermore, one can now rectify the proof of \cite[Lemma 2.2]{bct} by using H\"older's inequality to control an integral by its $L^2$ norm rather than by its $L^{q/d}$ norm.  More specifically, in the notation of that lemma, suppose that each $f_j$ is supported on $A_j^R$, and takes values in some Hilbert space $H_j$.  The measure $\mu_j$ on $\R^d$ defined by
$$ \int_{\R^d} F(x)\ d\mu_j(x) \coloneqq R^{-(d-1)} \int_{B^d(0,C/R)} \int_U F(\Sigma_j(x) + t)\ dx dt $$
obeys the pointwise bound $d\mu_j \gtrsim dx$ on $A_j^R$ if the constant $C$ is large enough, and hence one can find a bounded weight $w_j \in L^\infty(\R^d \to \R)$ such that
$$ \int_{A_j^R} F(x)\ dx \coloneqq R^{d-1} \int_{B^d(0,C/R)} \int_U F(\Sigma_j(x) + t) w(\Sigma_j(x)+t) \ dx dt.$$
As a consequence, we have the identity
$$ \hat f_j(\xi) = R^{d-1} \int_{B^d(0,C/R)} e^{i \xi \cdot t} {\mathcal E}_j f_{j,t}(\xi)\ dt$$
where $f_{j,t}\colon U \to H_j$ is the function
$$ f_{j,t}(x) \coloneqq f_j(\Sigma(x)+t) w(\Sigma(x)+t).$$
By Cauchy-Schwarz, we conclude that
$$ \hat f_j(\xi) \lesssim R^{-1} \left( R^{-d} \int_{B^d(0,C/R)} |{\mathcal E}_j f_{j,t}(\xi)|^2\ dt\right)^{1/2}.$$
We can write this as
$$ \hat f_j(\xi) \lesssim R^{-1} |{\mathcal E}_j F_j(\xi)|$$
where $F_j$ takes values in the Hilbert space $H_j \otimes L^2(B^d(0,C/R), R^{-d}\ dt)$ and is defined as
$$ F_j(\xi) \coloneqq (f_{j,t}(\xi))_{t \in B^d(0,C/R)}.$$
Applying the hypothesis ${\mathcal R}^*(2 \times \dots \times 2 \to q, \alpha)$, we conclude that
$$ \| \prod_{j =1}^d |\hat f_j| \|_{L^{q/d}(B(0,R))} \lesssim R^{\alpha-d} \prod_{j=1}^d \| F_j \|_2.$$
On the other hand, from the Fubini-Tonelli theorem one can verify that
$$  \| F_j \|_2 \lesssim R^{1/2} \| f_j \|_2$$
and the claim follows.

One can in fact be able to directly establish the equivalence of the scalar and vector-valued versions of the multilinear restriction theorem by a standard Khinchin inequality argument of Marcinkiewicz and Zygmund \cite{mz} (after first using a limiting argument to reduce to the case of finite-dimensional Hilbert spaces $H_j$), at least in the regime $q/d \leq 2$ which is the case of most interest in applications; we leave the details to the interested reader; in particular, this can be used to recover the second implication in \cite[Lemma 2.2]{bct}.  It is also not difficult to verify that the proof of Theorem \ref{lmrc} extends without difficulty to the vector-valued setting after making the obvious changes.

\end{document}